\documentclass{amsart}


\usepackage{amssymb}
\usepackage[hidelinks]{hyperref}
\usepackage[textsize=footnotesize]{todonotes}
\usepackage{tikz}
\usetikzlibrary{cd}
\usepackage{float}
\usepackage{xcolor}
\usepackage{todonotes}
\usepackage{framed}


\theoremstyle{plain}
\newtheorem{theorem}{Theorem}[section]
\newtheorem{lemma}[theorem]{Lemma}
\newtheorem{corollary}[theorem]{Corollary}
\newtheorem{proposition}[theorem]{Proposition}

\theoremstyle{definition}
\newtheorem{definition}[theorem]{Definition}
\newtheorem{example}[theorem]{Example}

\newtheorem{remark}[theorem]{Remark}

\theoremstyle{remark}



\newcommand{\bR}{\mathbb{R}}
\newcommand{\R}{\bR}


\newcommand{\cA}{\mathcal{A}}
\newcommand{\cB}{\mathcal{B}}

\newcommand{\cI}{\mathcal{I}}
\newcommand{\I}{\cI}
\newcommand{\cJ}{\mathcal{J}}
\newcommand{\J}{\cJ}

\newcommand{\cM}{\mathcal{M}}
\newcommand{\cN}{\mathcal{N}}
\newcommand{\cP}{\mathcal{P}}


\newcommand{\bnumber}{\mathfrak{b}}
\newcommand{\bb}{\bnumber}


\DeclareMathOperator{\non}{non}

\newcommand{\fin}{\mathrm{Fin}}
\newcommand{\Fin}{\mathrm{Fin}}

\newcommand{\ED}{\mathcal{ED}}

\newcommand{\Exh}{\mathrm{Exh}}



\begin{document}


\title[Egorov ideals]{Egorov ideals}


\author[Adam Kwela]{Adam Kwela}
\address[Adam Kwela]{Institute of Mathematics\\ Faculty of Mathematics\\ Physics and Informatics\\ University of Gda\'{n}sk\\ ul.~Wita  Stwosza 57\\ 80-308 Gda\'{n}sk\\ Poland}
\email{Adam.Kwela@ug.edu.pl}
\urladdr{http://mat.ug.edu.pl/~akwela}


\date{\today}


\subjclass[2010]{Primary: 
}


\keywords{
ideal, 
filter, 
ideal convergence, 
convergence of a sequence of functions,
Egorov theorem,
Egorov ideal.
}


\begin{abstract}
We study Egorov ideals, that is ideals on $\omega$ for which the Egorov’s theorem for ideal versions of pointwise and uniform convergences holds. We show that a non-pathological $\bf{\Sigma^0_2}$ ideal is Egorov if and only if it is countably generated. In particular, up to isomorphism, there are only three non-pathological $\bf{\Sigma^0_2}$ Egorov ideals. On the other hand, we construct $2^\omega$ pairwise non-isomorphic Borel Egorov ideals. Moreover, we characterize when a product of ideals is Egorov.
\end{abstract}


\maketitle


\section{Introduction}

Let $\lambda$ denote the Lebesgue measure on $\R$. In this paper by "measurable" we always mean "Lebesgue measurable". The classical Egorov's theorem (see e.g.~\cite[Proposition 3.1.4]{MR3098996}) says that if $(f_n)_{n\in\omega}$ is a sequence of measurable real-valued functions defined on $[0,1]$ which is pointwise convergent to a function $f:[0,1]\to\R$, then for every $\eta>0$ there is a measurable set $M\subseteq[0,1]$ such that $\lambda([0,1]\setminus M)<\eta$ and $(f_n\restriction M)_{n\in\omega}$ is uniformly convergent to $f\restriction M$.

The main point of this paper is to examine ideals on $\omega$ for which the ideal version of Egorov's theorem holds. To state it precisely, we need to introduce ideal convergences.

Let $\I$ be an ideal on $\omega$. A sequence $( x_n)_{n\in\omega}$ in a topological space $X$ is \emph{$\I$-convergent to} $x\in X$ ($x_n\xrightarrow{\I}x$) if for each open neighbourhood $U$ of $x$ we have: $$\{n\in\omega:\ x_n\notin U\}\in\I.$$ 
A sequence of real-valued functions $(f_n)_{n\in\omega}$ defined on a topological space $X$ is:
\begin{itemize}
\item \emph{$\I$-pointwise convergent to} $f\in \mathbb{R}^X$ ($f_n\xrightarrow{\text{$\I$-p}}f$) if $f_n(x)\xrightarrow{\I}f(x)$ for all $x\in X$;
\item \emph{$\I$-uniformly convergent to} $f\in \mathbb{R}^X$ ($f_n\xrightarrow{\text{$\I$-u}}f$) if for each $\varepsilon>0$ we have:
$$\left\{n\in\omega:\ |f_n(x)-f(x)|\geq\varepsilon\text{ for some }x\in X\right\}\in\I.$$ 
\end{itemize}

The following definition will be the main object of our studies.

\begin{definition}
\label{def}
Let $\I$ be an ideal on $\omega$. We say that  $\I$ is Egorov if for any sequence $(f_n)_{n\in\omega}$ of measurable real-valued functions defined on $[0,1]$ which is $\I$-pointwise convergent to a measurable function $f:[0,1]\to\R$ and for every $\eta>0$ there is a measurable set $M\subseteq[0,1]$ such that $\lambda([0,1]\setminus M)<\eta$ and $(f_n\restriction M)_{n\in\omega}$ is $\I$-uniformly convergent to $f\restriction M$.
\end{definition}

Note that many properties of ideals closely related to the above one have been considered in the literature so far. Egorov ideals in our sense were considered by Mro\.{z}ek in \cite{Nikodem-doktorat}, Korch in \cite{Korch} and Repick\'{y} in \cite{Repicky}. Mro\.{z}ek in \cite{Nikodem-doktorat} considered the condition from Definition \ref{def} without the assumption that the limit function $f$ is measurable (a pointwise limit of measurable functions is measurable, but this does not have to be true for ideal pointwise limits -- see \cite[Fact 1]{RL}). Kadets and Leonov in \cite{KadetsLeonov} as well as Mro\.{z}ek in \cite{Nikodem} studied ideals satisfying the condition from Definition \ref{def} for every finite measure space (not only for $[0,1]$). Finally, Korch in \cite{Korch} and Repick\'{y} in \cite{Repicky} investigated ideals satisfying the generalized Egorov's theorem, i.e., condition from Definition \ref{def} without measurability of functions $f_n$ and $f$ (such property, even in the case of classic convergence, is independent from ZFC). In particular, every ideal which is Egorov in the sense of Kadets and Leonov or generalized Egorov in the sense of Korch and Repick\'{y}, is also Egorov in our sense.

We decided to restrict ourselves to the measure space $[0,1]$ and to measurable functions for two reasons: it makes the paper easier to read and the property defined in Definition \ref{def} already seems to be very rare. However, most of our results are true also when one considers the above modifications of Definition \ref{def} (in the paper we indicate results with proofs valid only for Lebesgue measure and measurable functions). In particular, this applies to our two main results: characterization of Egorov ideals in the class of non-pathological $\bf{\Sigma^0_2}$ ideals (see Corollary \ref{GenEgorov-cor1}) and existence of $2^\omega$ pairwise non-isomorphic Borel Egorov ideals (see Corollary \ref{GenEgorov-cor2}).

The paper is organized as follows. In Section \ref{sec:preliminaries} we recall some set-theoretic notation and rather standard facts about ideals on $\omega$. In Section \ref{sec:characterizations}, we collect some known facts about Egorov ideals and apply them to give examples of well-known ideals that are or are not Egorov. Moreover, we prove a characterization of Egorov ideals that will be needed in one of the proofs from Section \ref{sec:products}. Section \ref{sec:Fsigma} is devoted to the first of our main results: we show that a non-pathological $\bf{\Sigma^0_2}$ ideal is Egorov if and only if it is countably generated, and conclude that, up to isomorphism, there are only three non-pathological $\bf{\Sigma^0_2}$ Egorov ideals (see Theorem \ref{thmFsigma}). In Section \ref{sec:RK}, using Rudin-Keisler order on ideals, we deal with pathological $\bf{\Sigma^0_2}$ ideals -- we show that being an Egorov ideal is downward closed in the Rudin-Keisler order, prove some facts about Rudin-Keisler order on $\bf{\Sigma^0_2}$ ideals and justify that the Mazur's ideal, i.e., most well-known example of a pathological $\bf{\Sigma^0_2}$ ideal, is not Egorov. In Section \ref{sec:products} we characterize when a product of ideals is Egorov. Those results are closely related to the results of \cite{Repicky}. Finally, in Section \ref{sec:uncountable} we prove the second of our main results by showing that there are $2^\omega$ pairwise non-isomorphic Borel Egorov ideals (see Theorem \ref{thm-uncountably}).


\section{Preliminaries}
\label{sec:preliminaries}

By $\omega$ we denote the set of all natural numbers, i.e., $\omega=\{0,1,\ldots\}$.
We identify a natural number $n$ with the set $\{0, 1,\dots , n-1\}$. If $A$ and $B$ are two sets then by $A^B$ we denote the family of all functions $f:B\to A$. The symbol $\omega^{<\omega}$ ($2^{<\omega}$) stands for all finite sequences of natural numbers (finite $0-1$ sequences, respectively).

If $s=(s(0),s(1),\ldots,s(n)),t=(t(0),t(1),\ldots,t(k))\in \omega^{<\omega}$ and $x\in\omega^\omega$, then the concatenation of $s$ with $t$ ($s$ with $x$) is
$s^\frown t=(s(0),s(1),\ldots,s(n),t(0),t(1),\ldots,t(k))$
($s^\frown x=(s(0),s(1),\ldots,s(n),x(0),x(1),\ldots)$, respectively) and we write $s\subseteq t$ ($s\subseteq x$) whenever $n\leq k$ and $s(i)=t(i)$, for all $i\leq n$ ($s(i)=x(i)$, for all $i\leq n$, respectively). By $|s|$ we denote the length of a sequence $s\in\omega^{<\omega}$.

We say that a family $\cA\subseteq\cP(\omega)$ is almost disjoint if $A\cap B$ is finite for all $A,B\in\cA$, $A\neq B$.

For $A\subseteq[0,1]$ by $\chi_A$ we denote the characteristic function of the set $A$, i.e., $\chi_A:[0,1]\to\mathbb{R}$ is given by:
$$\chi_A(x)=\begin{cases}
    1, & \text{if }x\in A,\\
    0, & \text{if }x\in [0,1]\setminus A,
\end{cases}$$
for all $x\in[0,1]$.


\subsection{Ideals}

An \emph{ideal on a set $X$} is a family $\I\subseteq\cP(X)$ that satisfies the following properties:
\begin{enumerate}
\item if $A,B\in \I$ then $A\cup B\in\I$,
\item if $A\subseteq B$ and $B\in\I$ then $A\in\I$,
\item $\I$ contains all finite subsets of $X$,
\item $X\notin\I$.
\end{enumerate}
In this paper we are only interested in ideals on countable sets, i.e., $X$ will always be countable.

An ideal $\I$ on $X$ is \emph{tall} if for every infinite $A\subseteq X$ there is an infinite $B\in\I$ such that $B\subseteq A$. An ideal $\I$ on $X$ is a \emph{P-ideal} if for any countable family $\cA\subseteq\I$ there is $B\in \I$ such that $A\setminus B$ is finite for every $A\in \cA$. If $\I$ is an ideal on $X$, then $\I^+=\{X\setminus A:\ A\in\I\}$ is its \emph{coideal} and for each $B\in\I^+$ the family $\I\restriction B=\{A\cap B:\ A\in\I\}$ is an ideal on $B$. We say that \emph{$\I$ is generated by a family $\cA\subseteq\cP(X)$}, if $\cA\subseteq\I$ and for each $B\in\I$ there is a finite family $\cA'\subseteq\cA$ such that $B\subseteq\bigcup\cA'$. An ideal is \emph{countably generated}, if there is a countable family generating it.

We treat the power set $\mathcal{P}(X)$ as the space $2^X$ of all functions $f:X\rightarrow 2$ (equipped with the product topology, where each space $2=\left\{0,1\right\}$ carries the discrete topology) by identifying subsets of $X$ with their characteristic functions. Thus, we can talk about descriptive complexity of subsets of $\mathcal{P}(X)$ (in particular, of ideals on $X$). 

It is known that there are no $\bf{\Pi^0_2}$ ideals and every analytic P-ideal is $\bf{\Pi^0_3}$ (see \cite[Lemma 1.2.2 and Theorem 1.2.5]{Farah}). 

Let $\I$ and $\J$ be ideals on $X$ and $Y$, respectively. We say that:
\begin{itemize}
    \item $\I$ and $\J$ are \emph{isomorphic}, if there is a bijection $f:Y\to X$ such that:
    $$A\in\I\ \Leftrightarrow\ f^{-1}[A]\in\J,$$
    for every $A\subseteq X$;
    \item \emph{$\I$ is below $\J$ in the Rudin-Keisler order} ($\I\leq_{RK}\J$), if there is a function $f:Y\to X$ (not necessarily bijection) such that:
    $$A\in\I\ \Leftrightarrow\ f^{-1}[A]\in\J,$$
    for every $A\subseteq X$;
    \item \emph{$\I$ is below $\J$ in the Rudin-Blass order} ($\I\leq_{RB}\J$) if $\I\leq_{RK}\J$ with the witnessing function being finite-to-one.
\end{itemize}

For simplicity, we will mostly work with ideals on $\omega$. However, it is easy to see that many properties of ideals (such as tallness, being an Egorov ideal, being a P-ideal, descriptive complexity of an ideal, etc.) are preserved under isomorphisms of ideals. Thus, our results are valid for ideals on any countable set.

\subsection{Sums and products of ideals}

If $\{X_t:\ t\in T\}$ is a family of sets, then $\sum_{t\in T}X_t=\{(t,x):\ t\in T,x\in X_t\}$ is its disjoint sum. The vertical section of a set $A\subseteq \sum_{t\in T}X_t$ at a point $t\in T$ is defined 
by $A_{(t)} = \{x\in X_t:\ (t,x)\in A\}$. 

For a family of ideals $\{\I_t:\ t\in T\}$, where $T$ is a countable set, and ideal $\I$ and $\J$ on $S$ and $T$, respectively, we define the following new ideals:
\begin{enumerate}
	\item $\sum_{t\in T}\I_t=\{M\subseteq \sum_{t\in T}X_t:\ \forall_{t\in T}\ M_{(t)}\in\I_t\}$;
 	\item $\sum^J_{t\in T}\I_t=\{M\subseteq \sum_{t\in T}X_t:\ \{t\in T:\ M_{(t)}\notin\I_t\}\in\J\}$;
    \item $\I\otimes\J=\sum^\I_{s\in S}\J=\{M\subseteq S\times T:\ \{s\in S:\ M_{(s)}\notin\J\}\in\I\}$;
	\item
$\I\otimes \{\emptyset\} = \{M\subseteq S\times \omega:\ \{s\in S:\ M_{(s)}\ne\emptyset\}\in \I\}$;
	\item
$\{\emptyset\} \otimes  \J= \{M\subseteq \omega\times T:\ M_{(n)} \in \J\text{ for all }n\in \omega\}$;
    \item $\I\oplus\J=\{M\subseteq(\{0\}\times S)\cup(\{1\}\times T):\ M_{(0)}\in\I\text{ and }M_{(1)}\in\J\}$;
    \item $\I\oplus\cP(\omega)=\{M\subseteq(\{0\}\times S)\cup(\{1\}\times \omega):\ M_{(0)}\in\I\}$.
\end{enumerate}

\subsection{Submeasures}

A function $\phi:\mathcal{P}(\omega)\to[0,\infty]$ is called a \emph{submeasure} if $\phi(\emptyset)=0$, $\phi(\{n\})<\infty$, for each $n\in\omega$, and:
$$\forall_{A,B\subseteq\omega}\ \phi(A)\leq\phi(A\cup B)\leq\phi(A)+\phi(B).$$
A submeasure $\phi$ is \emph{lower semicontinuous} (lsc, in short) if:
$$\forall_{A\subseteq\omega}\ \phi(A)=\lim_{n\to\infty}\phi(A\cap n).$$

Mazur in \cite[Lemma 1.2]{Mazur} proved that an ideal on $\omega$ is $\bf{\Sigma^0_2}$ if and only if it is of the form:
$$\Fin(\phi)=\left\{A\subseteq\omega:\ \phi(A)<\infty\right\}$$
for some lower semicontinuous submeasure $\phi$ such that $\omega\notin\Fin(\phi)$ (see also \cite[Theorem 1.2.5]{Farah}).

Solecki in \cite[Theorem 3.1]{SoleckiExh} showed that an ideal on $\omega$ is an analytic P-ideal if and only if it is of the form:
$$\Exh(\phi)=\left\{A\subseteq\omega:\ \lim_{n\to\infty}\phi(A\setminus n)=0\right\}$$
for some lower semicontinuous submeasure $\phi$ such that $\omega\notin\Exh(\phi)$ (see also \cite[Theorem 1.2.5]{Farah}). 

A submeasure $\phi$ is \emph{non-pathological}, if
$$\phi(A)=\sup\{\mu(A):\ \mu\text{ is a measure such that }\mu(B)\leq\phi(B)\text{ for all }B\subseteq\omega\},$$
for all $A\subseteq\omega$. We say that an $\bf{\Sigma^0_2}$ ideal (analytic P-ideal) is \emph{non-pathological}, if it is of the form $\Fin(\phi)$ ($\Exh(\phi)$, respectively), for some non-pathological submeasure $\phi$.

\subsection{Examples of ideals}
	
\begin{example}\
\begin{itemize} 
\item $\fin$ is the ideal of all finite subsets of $\omega$. It is a non-tall $\bf{\Sigma^0_2}$ P-ideal (\cite[Example 1.2.3]{Farah}).
\item $\fin\oplus\cP(\omega)$ is a non-tall $\bf{\Sigma^0_2}$ P-ideal (\cite[Example 1.2.3]{Farah}).
\item $\fin\otimes\{\emptyset\}$ is a $\bf{\Sigma^0_2}$ ideal that is not tall and not a P-ideal (\cite[Example 1.2.3]{Farah}).
\item $\{\emptyset\}\otimes\fin$ is a non-tall $\bf{\Pi^0_3}$ P-ideal (\cite[Example 1.2.3]{Farah}).
\item $\fin^2=\fin\otimes\fin$ is a tall $\bf{\Sigma^0_4}$ non-P-ideal (\cite[Proposition 6.4]{Debs}).
\item $\fin^{n+1}=\fin\otimes\fin^n$, for $n\in\omega\setminus\{0\}$, are tall $\bf{\Sigma^0_{2n}}$ non-P-ideals (\cite[Proposition 6.4]{Debs}). These ideals were introduced in \cite{Katetov} (see also \cite{Kat2} and \cite{Kat1}).
\item If $(c_n)_{n\in\omega}$ is a sequence of positive reals such that $\sum_{n\in\omega}c_n=\infty$, then $\I_{c_n}=\{A\subseteq\omega:\ \sum_{n\in A}c_n<\infty\}$ is a $\bf{\Sigma^0_2}$ P-ideal called a \emph{summable ideal}. A summable ideal $\I_{c_n}$ is tall if and only if $\lim_{n\to\infty}c_n=0$. In particular, $\I_{1/n}=\{A\subseteq\omega:\ \sum_{n\in A}\frac{1}{n+1}<\infty\}$ is a tall summable ideal (\cite[Example 1.2.3]{Farah}). 
\item $\I_{d}=\{A\subseteq\omega: \lim\frac{|A\cap n|}{n+1}=0\}$ is the \emph{ideal of sets of asymptotic density zero}. It is a tall $\bf{\Pi^0_3}$ P-ideal (\cite[Example 1.2.3]{Farah}).
\end{itemize}
\end{example}


\section{Characterizations and examples}
\label{sec:characterizations}

\begin{remark}
\label{rem-ex}
By \cite[Corollary 2.9]{KadetsLeonov}, every countably generated ideal is Egorov. By a result of Mro\.{z}ek \cite[Corollary 3.2 and Example 3.3]{Nikodem}, among analytic P-ideals that are not pathological the only Egorov ideals are those isomorphic to $\Fin$, $\Fin\oplus\cP(\omega)$ or $\{\emptyset\}\otimes\fin$ (note that \cite{Nikodem} uses a slightly different definition of isomorphism of ideals, which actually comes from \cite{Farah}, and for that reason $\Fin\oplus\cP(\omega)$ is not explicitly included in \cite[Corollary 3.2]{Nikodem}). Using those results together with \cite[Theorem 3.25]{Nikodem-doktorat} (see also \cite[Section 6]{Korch} and \cite[Theorems 1.3 and 3.2(7)]{Repicky}), we see that $\Fin$, $\fin\oplus\cP(\omega)$, $\fin\otimes\{\emptyset\}$, $\{\emptyset\}\otimes\fin$, $\fin^{n+1}$ for all $n\in\omega$, are Egorov, while $\I_{d}$ and summable ideals are not Egorov.
\end{remark}

In Section \ref{sec:products} we will give two other examples of Egorov ideals, that have been considered in the literature in completely different contexts. 

The following two results characterizing Egorov ideals are essentially due to Kadets and Leonov.

\begin{proposition}
\label{restrictions}
$\I$ is Egorov if and only if $\I\restriction A$ is Egorov, for every $A\in\I^+$.
\end{proposition}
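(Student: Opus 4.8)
The plan is to prove both implications of the biconditional, with the forward direction being essentially trivial and the backward direction carrying the content.

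\textbf{The easy direction ($\I$ Egorov $\Rightarrow$ $\I\restriction A$ Egorov for every $A\in\I^+$).} Fix $A\in\I^+$. We want to transfer an Egorov-type statement about $\I\restriction A$ to one about $\I$. Given a sequence $(g_n)_{n\in A}$ of measurable functions on $[0,1]$ that is $(\I\restriction A)$-pointwise convergent to a measurable $g$, we simply extend it to a sequence $(f_n)_{n\in\omega}$ indexed by all of $\omega$, for instance by setting $f_n=g$ (the limit function itself) for $n\in\omega\setminus A$. Then for every $x$, the set $\{n\in\omega:\ f_n(x)\notin U\}$ equals $\{n\in A:\ g_n(x)\notin U\}$ (plus possibly finitely many, or none, of the indices outside $A$, depending on whether $g(x)\in U$), which lies in $\I\restriction A\subseteq\I$; so $(f_n)$ is $\I$-pointwise convergent to $g$. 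Applying the Egorov property of $\I$ yields a measurable $M$ with $\lambda([0,1]\setminus M)<\eta$ such that $(f_n\restriction M)$ is $\I$-uniformly convergent to $g\restriction M$; intersecting the bad-index sets with $A$ shows $(g_n\restriction M)_{n\in A}$ is $(\I\restriction A)$-uniformly convergent to $g\restriction M$. One has to check that the bad-index sets genuinely land in $\I\restriction A$ and not merely in $\I$, but since every index under consideration already lies in $A$, a set of bad indices is automatically a subset of $A$, hence in $\I$ iff in $\I\restriction A$.

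\textbf{The harder direction ($\I\restriction A$ Egorov for every $A\in\I^+$ $\Rightarrow$ $\I$ Egorov).} This should follow by taking $A=\omega$, since $\omega\in\I^+$ and $\I\restriction\omega=\I$. So in fact this direction is immediate once one notes that $\omega$ is itself a positive set. The only genuine subtlety—and where I expect the real (though modest) obstacle to be—is making sure the definitions match up on the nose: $\I\restriction A$ is an ideal on the set $A$ rather than on $\omega$, so one must either appeal to the remark in the preliminaries that Egorov-ness is preserved under isomorphism and that properties of ideals on countable sets transfer along bijections $A\leftrightarrow\omega$, or else observe directly that the definition of ``Egorov'' only ever refers to sequences indexed by the underlying set and to membership of index-sets in the ideal, both of which are insensitive to relabelling. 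Either way, $\I\restriction\omega$ is literally $\I$, so no relabelling is even needed for this half.

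\textbf{Summary of structure.} I would state the forward direction first with the extension-by-the-limit-function trick, carefully verifying the two membership claims (pointwise convergence of the extension, and that the output bad sets lie in $\I\restriction A$); then dispatch the converse in one line by specializing to $A=\omega$. The main thing to get right is bookkeeping about which ideal the index-sets belong to, and the observation that subsets of $A$ belong to $\I$ if and only if they belong to $\I\restriction A$, which is what makes the restriction play nicely with the quantifier over $A\in\I^+$. No deep idea is needed; the content of the proposition is really the bridge it provides for later localization arguments (as the excerpt flags, it is used in Section~\ref{sec:products}).
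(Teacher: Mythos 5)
Your proposal is correct, but for the substantive direction it takes a genuinely different route from the paper. The paper argues contrapositively: if $\I\restriction A$ fails to be Egorov for some $A\in\I^+$, it splits into the cases $\omega\setminus A\in\I$ (where $\I$ is isomorphic to $\I\restriction A$) and $\omega\setminus A\in\I^+$ (where $\I$ is isomorphic to $(\I\restriction A)\oplus(\I\restriction(\omega\setminus A))$), and then invokes the external fact that $\I\oplus\J$ is Egorov iff both summands are (Kadets--Leonov, restated as Proposition \ref{sums}). You instead give a direct, self-contained argument: extend a sequence indexed by $A$ to one indexed by $\omega$ by padding with the limit function $g$, apply the Egorov property of $\I$, and pull the bad-index sets back into $A$. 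The two small verifications you flag are exactly the right ones and both go through: $\I\restriction A\subseteq\I$ (since $B\cap A\subseteq B$), and for $B\subseteq A$ one has $B\in\I$ iff $B\in\I\restriction A$, so the uniform-convergence witness sets land where they should. (Your parenthetical hedge about indices outside $A$ is unnecessary: in the definition of pointwise convergence $U$ is always a neighbourhood of $g(x)$, so the padded indices never appear in the bad set.) Your approach buys independence from the direct-sum machinery and from the isomorphism-invariance citations; the paper's approach buys brevity by reusing results it needs anyway. The converse direction is handled identically in both: specialize to $A=\omega$.
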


\begin{proof}
If $\I$ is not Egorov, then $\I\restriction\omega=\I$ is not Egorov. On the other hand, suppose that $\I\restriction A$ is not Egorov, for some $A\in\I^+$. If $\omega\setminus A\in\I$, then $\I$ is not Egorov as it is isomorphic to $\I\restriction A$ (see for instance \cite[Proposition 1.2(b)]{homogeneous}). If $\omega\setminus A\in\I^+$ then it is easy to see that $\I$ is isomorphic to $(\I\restriction A)\oplus(\I\restriction (\omega\setminus A))$, hence, $\I$ is not Egorov by \cite[Proposition 2.4]{KadetsLeonov}.
\end{proof}

\begin{theorem}[{\cite[Theorem 2.1]{KadetsLeonov}}]
\label{characteristic}
Let $\I$ be an ideal on $\omega$. Then $\I$ is Egorov if and only if for any sequence $(X_n)_{n\in\omega}$ of measurable subsets of $[0,1]$ such that $(\chi_{X_n})_{n\in\omega}$ is $\I$-pointwise convergent to $0$ and for any $\eta>0$ there is a measurable set $M\subseteq[0,1]$ such that $\lambda(M)\leq\eta$ and $\{n\in\omega:\ X_n\not\subseteq M\}\in\I$.
\end{theorem}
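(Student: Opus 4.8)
The plan is to prove the equivalence in Theorem~\ref{characteristic} by showing each direction. The forward direction is immediate: if $\I$ is Egorov, then given measurable sets $(X_n)_{n\in\omega}$ with $\chi_{X_n}\xrightarrow{\text{$\I$-p}}0$, apply Definition~\ref{def} to the sequence $(\chi_{X_n})_{n\in\omega}$ and the limit function $f=0$ (which is measurable) with the given $\eta$. This yields a measurable $M'\subseteq[0,1]$ with $\lambda([0,1]\setminus M')<\eta$ such that $(\chi_{X_n}\restriction M')_{n\in\omega}$ is $\I$-uniformly convergent to $0$. Taking $\varepsilon=1$ in the definition of $\I$-uniform convergence, the set $\{n:|\chi_{X_n}(x)|\geq 1\text{ for some }x\in M'\}=\{n:X_n\cap M'\neq\emptyset\}$ lies in $\I$. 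Setting $M=[0,1]\setminus M'$ gives $\lambda(M)<\eta$ (in particular $\leq\eta$) and $\{n:X_n\not\subseteq M\}=\{n:X_n\cap M'\neq\emptyset\}\in\I$, as required.

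For the converse direction, assume the stated condition on characteristic functions and let $(f_n)_{n\in\omega}$ be measurable real-valued functions $\I$-pointwise convergent to a measurable $f$, with $\eta>0$ given. The standard approach mimics the classical proof of Egorov's theorem: for $k,m\in\omega$ set
$$E^k_m=\left\{x\in[0,1]:\ |f_n(x)-f(x)|\geq \tfrac{1}{k+1}\text{ for some }n\geq m\right\}.$$
The subtlety is that under $\I$-pointwise convergence the sets $E^k_m$ need not decrease to a null set as $m\to\infty$. Instead, for each fixed $k$, the right framework is to look at the sets $Y^k_n=\{x:|f_n(x)-f(x)|\geq\frac{1}{k+1}\}$; the hypothesis $f_n\xrightarrow{\text{$\I$-p}}f$ says precisely that $(\chi_{Y^k_n})_{n\in\omega}$ is $\I$-pointwise convergent to $0$ for every $k$. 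Apply the assumed characteristic-function condition to the sequence $(Y^k_n)_{n\in\omega}$ with tolerance $\eta/2^{k+2}$ to obtain a measurable $M_k$ with $\lambda(M_k)\leq\eta/2^{k+2}$ and $\{n:Y^k_n\not\subseteq M_k\}\in\I$. Let $M=[0,1]\setminus\bigcup_{k\in\omega}M_k$, so $\lambda([0,1]\setminus M)\leq\sum_k\eta/2^{k+2}=\eta/2<\eta$.

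It remains to verify that $(f_n\restriction M)_{n\in\omega}$ is $\I$-uniformly convergent to $f\restriction M$. Fix $\varepsilon>0$ and choose $k$ with $\frac{1}{k+1}<\varepsilon$. If $n\notin\{n:Y^k_n\not\subseteq M_k\}$, i.e. $Y^k_n\subseteq M_k$, then for every $x\in M\subseteq[0,1]\setminus M_k$ we have $x\notin Y^k_n$, hence $|f_n(x)-f(x)|<\frac{1}{k+1}<\varepsilon$. Therefore
$$\left\{n:\ |f_n(x)-f(x)|\geq\varepsilon\text{ for some }x\in M\right\}\subseteq\{n:\ Y^k_n\not\subseteq M_k\}\in\I,$$
so this set is in $\I$. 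This completes the converse. The main obstacle to watch is the first one I flagged: unlike the classical setting one cannot work with a single tail set $E^k_m$ and push $m$ to infinity, because $\I$-pointwise convergence gives no monotone exhaustion; the correct move is to feed each $k$-level family $(Y^k_n)_n$ directly into the characteristic-function hypothesis and sum the error terms over $k$. One should also note that the whole argument uses only that singletons are $\lambda$-null and that $\lambda$ is countably additive, so it transfers verbatim to any finite measure space, as the paper remarks; since this is essentially \cite[Theorem~2.1]{KadetsLeonov}, the write-up can be kept brief.
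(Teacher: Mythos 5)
Your proof is correct. The paper does not reprove this statement but simply cites it as \cite[Theorem 2.1]{KadetsLeonov}; your argument (the forward direction via $\varepsilon=1$ applied to characteristic functions, and the converse via the level sets $Y^k_n=\{x:|f_n(x)-f(x)|\geq\frac{1}{k+1}\}$ fed into the hypothesis with tolerances $\eta/2^{k+2}$ summed over $k$) is exactly the standard Kadets--Leonov argument, and your remark that the classical monotone tail sets $E^k_m$ are unavailable here is the right diagnosis of why the per-$n$ level sets must be used instead.
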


Next result eliminates the parameter $\eta$ from the previous one. In the proof we use regularity of the Lebesgue measure -- we do not know whether this result is true for Egorov ideals in the sense of Kadets and Leonov.

\begin{theorem}
\label{onehalf}
Let $\I$ be an ideal on $\omega$. Then $\I$ is Egorov if and only if for any sequence $(X_n)_{n\in\omega}$ of measurable subsets of $[0,1]$ such that $(\chi_{X_n})_{n\in\omega}$ is $\I$-pointwise convergent to $0$ there is a measurable set $M\subseteq[0,1]$ such that $\lambda(M)\leq\frac{1}{2}$ and $\{n\in\omega:\ X_n\not\subseteq M\}\in\I$.
\end{theorem}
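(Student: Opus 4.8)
The plan is to deduce Theorem \ref{onehalf} from Theorem \ref{characteristic}. One direction is trivial: if $\I$ is Egorov, then Theorem \ref{characteristic} applied with $\eta=\frac12$ gives exactly the stated condition, so the content is the converse. So assume the one-half condition holds, fix a sequence $(X_n)_{n\in\omega}$ of measurable subsets of $[0,1]$ with $\chi_{X_n}\xrightarrow{\text{$\I$-p}}0$ and fix $\eta>0$; we must produce a measurable $M$ with $\lambda(M)\leq\eta$ and $\{n:\ X_n\not\subseteq M\}\in\I$, and then invoke Theorem \ref{characteristic}.

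The idea is a rescaling/subdivision argument. Partition $[0,1]$ into finitely many intervals $I_1,\dots,I_k$ each of length $\leq\frac12$ (in fact length $<\eta$ once we iterate, but let me first handle the halving step and then iterate). For each $j\leq k$ let $\varphi_j:[0,1]\to I_j$ be an affine bijection and set $Y_n^{(j)}=\varphi_j^{-1}[X_n\cap I_j]$. Since $\chi_{X_n}\xrightarrow{\text{$\I$-p}}0$ on $[0,1]$, for each fixed $x\in[0,1]$ and each $j$ we have $\chi_{X_n}(\varphi_j(x))\xrightarrow{\I}0$, i.e. $\chi_{Y_n^{(j)}}\xrightarrow{\text{$\I$-p}}0$ on $[0,1]$. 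Apply the hypothesis to each of the $k$ sequences $(Y_n^{(j)})_{n\in\omega}$ to obtain measurable sets $N_j\subseteq[0,1]$ with $\lambda(N_j)\leq\frac12$ and $A_j:=\{n:\ Y_n^{(j)}\not\subseteq N_j\}\in\I$. Let $M=\bigcup_{j\leq k}\varphi_j[N_j]$; then $\lambda(M)\leq\sum_{j\leq k}\lambda(I_j)\cdot\lambda(N_j)\leq\frac12$, and if $n\notin\bigcup_{j\leq k}A_j$ then $X_n\cap I_j=\varphi_j[Y_n^{(j)}]\subseteq\varphi_j[N_j]\subseteq M$ for every $j$, hence $X_n\subseteq M$. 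Since $\bigcup_{j\leq k}A_j\in\I$, this shows the one-half condition self-improves: $\lambda(M)\leq\frac12$ can be upgraded to $\lambda(M)\leq\frac12\cdot\frac12=\frac14$ by using intervals of length $\leq\frac12$ and feeding in the already-improved bound — and iterating $m$ times gives $\lambda(M)\leq 2^{-m}$, so for any $\eta>0$ we get a suitable $M$. Then Theorem \ref{characteristic} finishes.

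A cleaner way to organize the iteration, avoiding the recursion, is to do it in one shot: fix $m$ with $2^{-m}\leq\eta$, partition $[0,1]$ into $2^m$ intervals $I_1,\dots,I_{2^m}$ of length $2^{-m}$, and define $Y_n^{(j)}$ as above. Applying the hypothesis to each $(Y_n^{(j)})_{n}$ gives $N_j$ with $\lambda(N_j)\leq\frac12$ and $A_j\in\I$; setting $M=\bigcup_j\varphi_j[N_j]$ yields $\lambda(M)\leq\sum_j 2^{-m}\cdot\frac12=\frac12<1$. That only gives $\frac12$, not $\eta$, so in fact the recursion is needed: after obtaining an $M_0$ with $\lambda(M_0)\leq\frac12$ one rescales the complement structure. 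The honest statement is: the hypothesis with bound $c<1$ implies the hypothesis with bound $c^2$ (partition into intervals on which the map contracts by factor $c$ — this requires $c$ to be such that $[0,1]$ splits into finitely many intervals of length $\leq c$, which holds for any $c>0$), hence by induction with bound $c^{2^m}$, and $c^{2^m}\to0$. This is where regularity of Lebesgue measure is genuinely used and is the step the remark flags: the affine maps $\varphi_j$ carry measurable sets to measurable sets and scale measure by the (constant) Jacobian, which is exactly translation-invariance plus scaling of $\lambda$; for an abstract finite measure space no such self-similarity is available, so the argument does not transfer.

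The main obstacle is making the rescaling bookkeeping precise — in particular verifying that $\chi_{Y_n^{(j)}}\xrightarrow{\text{$\I$-p}}0$ (immediate, as $\I$-convergence of $\chi_{Y_n^{(j)}}(x)$ at $x$ is $\I$-convergence of $\chi_{X_n}$ at $\varphi_j(x)$), that $\varphi_j[N_j]$ is measurable with $\lambda(\varphi_j[N_j])=2^{-m}\lambda(N_j)$, and that the pullback of the ``bad'' index set behaves additively so that the finite union $\bigcup_j A_j$ stays in $\I$. None of these is hard, but the iteration needs to be phrased so that the constant really does tend to $0$; once it does, an appeal to Theorem \ref{characteristic} with the value of $\eta$ in hand completes the proof.
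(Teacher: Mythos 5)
Your reduction to Theorem \ref{characteristic} and the idea of affinely rescaling subintervals of $[0,1]$ are both on the right track (the paper's proof also works by rescaling intervals and crucially uses regularity of $\lambda$), but the self-improvement step as you state it is false, and this is a genuine gap. If you partition $[0,1]$ into intervals $I_1,\dots,I_k$ and apply the bound-$c$ hypothesis to each rescaled piece, the resulting set $M=\bigcup_j\varphi_j[N_j]$ satisfies only $\lambda(M)\leq\sum_j\lambda(I_j)\cdot c=c$, because the lengths $\lambda(I_j)$ sum to $1$; making the intervals shorter than $c$ buys nothing, so ``bound $c$ implies bound $c^2$'' does not follow from this construction. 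You notice exactly this problem for the one-shot version, but then reassert the same fixed-partition argument as the ``honest'' iterated version without changing anything that would make the constant decrease. No amount of refining a partition of all of $[0,1]$ can push the bound below $\frac12$.

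What is missing is that each successive application of the hypothesis must be confined to the part of $[0,1]$ not yet secured, so that the total length of the intervals being rescaled shrinks geometrically. Concretely (this is the paper's argument): the first application gives $M_0$ with $\lambda(M_0)\leq\frac12$ and $\{n:X_n\not\subseteq M_0\}\in\I$; by regularity of Lebesgue measure choose a closed $F_0\subseteq[0,1]\setminus M_0$ with $\lambda(F_0)\geq\frac13$, and note $\{n:X_n\cap F_0\neq\emptyset\}\in\I$. Now $[0,1]\setminus F_0$ is open, hence a countable union of open intervals of total length $\leq\frac23$; pick finitely many of them carrying most of that length, rescale each to $[0,1]$, apply the hypothesis again, and extract new good closed sets covering a fixed fraction of each. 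The ``bad'' region then has measure at most roughly $(\frac23)^2$, and iterating $k$ times gives $(\frac23)^{k+1}<\eta$. Regularity enters precisely to convert the complement of the measurable set $M_0$ into a closed set whose complement is a union of intervals amenable to affine rescaling; your write-up invokes regularity but never uses it to restrict the next round of rescaling to the unsecured region, which is the whole point. (You would also need to handle the fact that the open complement can have infinitely many components, by selecting finitely many capturing most of the measure.)
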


\begin{proof}
The implication $(\implies)$ follows from Theorem \ref{characteristic}. We will shown the opposite one.

Assume that $\I$ is not Egorov. By Theorem \ref{characteristic}, there are $\eta>0$ and a sequence $(X_n)_{n\in\omega}$ of measurable subsets of $[0,1]$ such that $(\chi_{X_n})_{n\in\omega}$ is $\I$-pointwise convergent to $0$, but for any measurable set $M\subseteq[0,1]$, if $\lambda(M)\leq\eta$, then $\{n\in\omega:\ X_{n}\not\subseteq M\}\notin\I$. 

Suppose towards contradiction that for any sequence $(Y_n)_{n\in\omega}$ of measurable subsets of $[0,1]$ such that $(\chi_{Y_n})_{n\in\omega}$ is $\I$-pointwise convergent to $0$, there is a measurable set $M\subseteq[0,1]$ with $\lambda(M)\leq\frac{1}{2}$ and $\{n\in\omega:\ X_{n}\not\subseteq M\}\in\I$. Let $k\in\omega$ be such that $(\frac{2}{3})^{k+1}<\eta$. 

We will inductively construct $m_n\in\omega$, for $n\leq k$, $A_n\in\I$, for $i\leq m_k$, and closed sets $F_i\subseteq[0,1]$, for $i\leq m_k$, such that:
\begin{itemize}
\item[(a)] $m_0=0$ and $m_{n}\leq m_{n+1}$, for all $n<k$;
\item[(b)] $\lambda(\bigcup_{i\leq m_n}F_i)\geq 1-(\frac{2}{3})^{n+1}$, for all $n\leq k$;
\item[(c)] $A_{i}=\{j\in\omega:\ X_{j}\cap F_{i}\neq\emptyset\}\in\I$, for all $i\leq m_k$.
\end{itemize}

At first step, put $m_0=0$. By our assumption, there is a measurable $M_0\subseteq[0,1]$ with $\lambda(M_0)\leq\frac{1}{2}$ and $\{n\in\omega:\ X_{n}\not\subseteq M_0\}\in\I$. Since $[0,1]\setminus M_0$ is measurable and $\lambda([0,1]\setminus M_0)\geq\frac{1}{2}$, by regularity of Lebesgue measure, there is a closed set $F_0\subseteq [0,1]$ such that $F_0\cap M_0=\emptyset$ and $\lambda(F_0)\geq\frac{1}{3}=1-\frac{2}{3}$. Observe that $A_0=\{n\in\omega:\ X_{n}\cap F_0\neq\emptyset\}\subseteq \{n\in\omega:\ X_{n}\not\subseteq M_0\}\in\I$.

If $m_i$, for all $i<n\leq k$, as well as $A_i$ and $F_i$, for all $i\leq m_{n-1}$, are already defined, let $\{U_{i}:\ i\in\omega\setminus(m_{n-1}+1)\}$ be open connected components of $[0,1]\setminus \bigcup_{i\leq m_{n-1}}F_i$ (if the latter set has only finitely many connected components, just put $U_i=\emptyset$ for almost all $i$). In particular, each $U_i$ is an open subinterval of $[0,1]$ and $\sum_{i>m_{n-1}}\lambda(U_i)\leq(\frac{2}{3})^{n}$. Hence, there is $m_n\in\omega$, $m_n>m_{n-1}$, such that $\sum_{m_{n-1}<i\leq m_n}\lambda(U_i)\geq\frac{5}{6}\lambda(\bigcup_{i>m_{n-1}}U_{i})$.

For each $m_{n-1}<i\leq m_n$ let $f_{i}:\overline{U_i}\to[0,1]$ be a linear function and put $X_j^{i}=f_{i}[X_j\cap U_{i}]$. Then each $X_j^{i}$ is measurable and $(\chi_{X_j^{i}})_{j\in\omega}$ is $\I$-pointwise convergent to $0$. By our assumption, for each $m_{n-1}<i\leq m_n$ there is a measurable $M_{i}\subseteq[0,1]$ with $\lambda(M_{i})\leq\frac{1}{2}$ and $\{j\in\omega:\ X_j^{i}\not\subseteq M_{i}\}\in\I$. Note that $\lambda(f^{-1}[M_i])\leq\frac12\lambda(U_i)$. By regularity of Lebesgue measure, there are closed sets $F_{i}\subseteq U_i$ such that $f_i[F_i]\cap M_{i}=\emptyset$ and $\lambda(F_i)\geq \frac{2}{5}\lambda(U_i)$. Observe that $A_i=\{j\in\omega:\ X_{j}\cap F_i\neq\emptyset\}=\{j\in\omega:\ (X_{j}\cap U_i)\cap F_i\neq\emptyset\}\subseteq \{j\in\omega:\ X_j^{i}\not\subseteq M_i\}\in\I$. Moreover, 
$$\lambda\left(\bigcup_{i\leq m_n}F_i\right)=\lambda\left(\bigcup_{i\leq m_{n-1}}F_i\right)+\sum_{m_{n-1}<i\leq m_n} \lambda(F_i)\geq$$
$$1-\left(\frac{2}{3}\right)^{n}+\frac{2}{5}\sum_{m_{n-1}<i\leq m_n} \lambda(U_i)\geq 1-\left(\frac{2}{3}\right)^{n}+\frac{1}{3}\lambda\left(\bigcup_{i>m_{n-1}}U_{i}\right)=1-\left(\frac{2}{3}\right)^{n+1}.$$
Thus, the construction is finished.

Put $M=[0,1]\setminus\bigcup_{i\leq m_k}F_i$. Then $M$ is measurable and $\lambda(M)\leq\left(\frac{2}{3}\right)^{k+1}<\eta$ and $\{i\in\omega:\ X_i\not\subseteq M\}=\bigcup_{i\leq m_k}A_i\in\I$, which contradicts the choice of $(X_i)_{i\in\omega}$ and finishes the proof.
\end{proof}

\section{Non-pathological \texorpdfstring{$\bf{\Sigma^0_2}$}{F } ideals}
\label{sec:Fsigma}

In this section we will show that the only non-pathological $\bf{\Sigma^0_2}$ Egorov ideals are those isomorphic to $\Fin$, $\Fin\oplus\cP(\omega)$ or $\Fin\otimes\{\emptyset\}$. We will work with trees, so we need to introduce some notation.

\begin{definition}
Let $c\in\omega^\omega$. Denote:
\begin{itemize}
    \item by $T_{c}$ the family of all $s=(s(0),s(1),\ldots,s(n))\in \omega^{<\omega}$ such that $s(i)<c(i)$ for all $i\leq n$;
    \item by $\hat{T}_{c}$ the family of all $x\in \omega^{\omega}$ such that $x(i)<c(i)$ for all $i\leq n$;
    \item $A_x=\{x\restriction n:\ n\in\omega\}\subseteq T_c$, for each $x\in \hat{T}_c$.
\end{itemize}
\end{definition}

Now we will show that a non-pathological $\bf{\Sigma^0_2}$ ideal, which is not countably generated, cannot be Egorov. We decided to divide this proof into two lemmas, as in the first one we do not need the assumption that our ideal is non-pathological.

\begin{lemma}
\label{lem1Fsigma}
If $\I=\Fin(\phi)$ is an $\bf{\Sigma^0_2}$ ideal which is not countably generated, then there are $A\in\I^+$, $c\in\omega^\omega$ and a bijection $f:A\to T_c$ such that:
\begin{itemize}
    \item $\phi(f^{-1}[\{t^\frown(i):\ i<c(n)\}])>n+1$, for every $t\in T_c\cap\omega^n$;
    \item $f^{-1}[A_x]\in\I\restriction A$, for every $x\in \hat{T}_c$.
\end{itemize}
\end{lemma}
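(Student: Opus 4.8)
The plan is to build the tree structure $T_c$ by recursion on the levels, at each step using the fact that $\I$ is not countably generated to find ``enough room'' to split. The key observation is the following: an $\bf{\Sigma^0_2}$ ideal $\I = \Fin(\phi)$ is countably generated if and only if it is of the form $\fin$ on some cofinite piece or decomposes in a bounded way; in general, failure of countable generation means that for every sequence $(B_k)_{k\in\omega}$ in $\I$ there is a set $C\in\I^+$ almost disjoint from every $B_k$, or more usefully, that no countable subfamily of $\I$ generates it. I would first record the standard reformulation: $\I$ is not countably generated iff for every countable $\{B_k : k\in\omega\}\subseteq\I$ there exists $D\in\I$ with $D\not\subseteq^* B_k$ for... actually the cleanest form is: there is no $\subseteq^*$-increasing sequence in $\I$ that is cofinal. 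From this one extracts, for any given finite collection of ``already used'' sets and any threshold $m$, a set $E\in\I$ with $\phi(E)>m$ that is disjoint from the used sets (after possibly removing a finite part). This last point is where non-pathologicality is \emph{not} needed, matching the remark before the lemma.

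\textbf{Construction.} I would recursively construct finite partial trees together with an assignment of pairwise disjoint finite subsets of $\omega$ to their nodes. At stage $n$, having defined $c\restriction n$ and, for each $t\in T_c\cap\omega^{\le n}$ at the current level, a finite set $S_t\subseteq\omega$ with the $S_t$'s pairwise disjoint, I process each node $t$ of length $n$ as follows: I need to find $c(n)$ and pairwise disjoint finite sets $S_{t^\frown(i)}$, $i<c(n)$, each contained in some fixed set attached to $t$, with $\phi(S_{t^\frown(i)}) > n+1$. Since at level $n$ there are finitely many nodes, I can handle them one at a time; for each I use non-countable-generation of $\I$ to produce, inside the ``ambient'' part reserved for $t$ and avoiding everything used so far, infinitely many pairwise disjoint finite sets of $\phi$-mass $> n+1$, and pick the first $c(n)$ of them (choosing $c(n)$ uniformly across all length-$n$ nodes to be the max needed, which is fine since it's finite). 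The set $A$ is then $\bigcup_{t} S_t$, and the bijection $f:A\to T_c$ is forced: on $S_t$, $f$ is any bijection onto $\{t\}$ — wait, that doesn't work since $T_c$ is the tree of nodes, not a disjoint union indexed by nodes. Let me re-read: $f:A\to T_c$ is a bijection, $A\in\I^+$. So $A$ has the same cardinality as $T_c$ (countably infinite), and $f$ carries the ``tree structure'' of $T_c$ onto $A$. The first bullet says $\phi$ of the $f$-preimage of the immediate successors of a node $t$ (which in $T_c$ is a genuine set of $c(n)$ many nodes, namely $\{t^\frown(i):i<c(n)\}$) exceeds $n+1$. So I should attach to each node $t\in T_c$ a \emph{single} point $a_t\in\omega$ via $f^{-1}(t)=a_t$, but then $\phi(\{a_{t^\frown(i)}: i<c(n)\})>n+1$ needs a whole cluster of points to have large mass — so actually I want $f^{-1}$ of a single node to be a single element of $\omega$, but the \emph{set of $c(n)$ successors} maps to a set whose mass is large. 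This forces $c(n)$ to grow (since $\phi(\{j\})<\infty$ for singletons, a set of $c(n)$ points can only have mass $>n+1$ if either $c(n)$ is large or individual masses are large — both achievable).

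\textbf{Reorganizing.} So the correct construction assigns to each node $t\in T_c$ a distinct point $a_t\in\omega$, with $A=\{a_t : t\in T_c\}$, ensuring: (i) for each $t$ of length $n$, the finite set $\{a_{t^\frown(i)} : i<c(n)\}$ has $\phi$-mass $>n+1$; (ii) for each branch $x\in\hat T_c$, the set $\{a_{x\restriction n}: n\in\omega\}$ lies in $\I$. Condition (ii) is automatic if I arrange $A\setminus(\text{finite})\subseteq$ ... no — I would arrange that $\{a_{x\restriction n}:n\}$ is $\phi$-small for every branch. The trick: build level $n$ inside a ``bank'' $B_t\in\I$ attached to each node $t$ of level $n-1$, with $B_t$ chosen so that $\phi(B_t)$ is finite (it's in $\I$) yet $\phi$ of the successor-cluster of $t$, drawn from $B_t$, can still exceed $n+1$ — no contradiction since $B_t$ has large-but-finite mass. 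Passing $B_{t^\frown(i)}\subseteq B_t$ down the tree with $\phi(B_{t^\frown(i)})\to$ along a branch controlled so the union of chosen points along a branch has finite $\phi$-mass: e.g., ensure $\phi(\{a_{x\restriction m}: m\ge n\}) \le \phi(B_{x\restriction n})$ and that along each branch $\phi(B_{x\restriction n})$ stays bounded. That $A\in\I^+$: since $\phi(\{a_{t^\frown(i)}:i<c(n)\})>n+1$ for cofinally many disjoint clusters, any finite union of $\I$-sets misses one, so $A\notin$ anything... more carefully, $A\in\I^+$ because $\phi(A)=\infty$ (it contains clusters of unbounded mass). \textbf{Main obstacle.} The genuine difficulty is reconciling (i) and (ii) simultaneously: the successor-clusters must have \emph{large} mass while every branch must have \emph{small} (finite, even $\to 0$ is not required, just finite) mass. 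This is exactly where non-countable-generation is used — it guarantees that inside any $\I$-set $B_t$ one can still find, avoiding finitely many forbidden points, infinitely many finite subsets of arbitrarily large mass AND simultaneously shrink the ``bank'' for the next level; the balancing of the geometric-type bookkeeping (reserving disjoint banks, controlling branch-masses) is the technical heart, and I expect it to require a careful choice of the nested banks $B_t$ together with an application of lower semicontinuity of $\phi$ to realize ``mass $>n+1$'' on a \emph{finite} set.
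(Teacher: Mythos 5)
Your proposal correctly identifies the shape of the statement (one point of $A$ per node of $T_c$, large successor-clusters, small branches) and correctly isolates the main tension, but the mechanism you propose to resolve that tension cannot work, and the hypothesis that $\I$ is not countably generated is never put into a usable form. Two concrete problems. First, the nested banks $B_{t^\frown(i)}\subseteq B_t$ with $B_t\in\I$ are self-defeating: the successor cluster of a level-$n$ node $s$ is drawn from $B_{s}\subseteq B_{s\restriction (n-1)}\subseteq\cdots\subseteq B_\emptyset$ and must have $\phi$-mass $>n+1$, so by monotonicity $\phi(B_\emptyset)>n+1$ for every $n$, i.e.\ $B_\emptyset\notin\I$. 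Second, and relatedly, your claim that non-countable-generation lets you find, inside any $\I$-set $B_t$, finite subsets of arbitrarily large mass is false: if $\phi(B_t)=K<\infty$ then every subset of $B_t$ has mass at most $K$. Successor clusters of unbounded mass must be drawn from sets in $\I^+$, and then an entirely different device is needed to force each branch into $\I$; your sketch acknowledges this is "the technical heart" but does not supply it.

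The paper's device is the following, and it is the missing idea. Stratify $\I=\bigcup_{k\in\omega}\I_k$ with $\I_k=\{A\in\I:\ \phi(A)\le k\}$; since $\I$ is not countably generated, some fixed $\I_k$ cannot be covered by countably many members of $\I$ (is not "$\I$-countably generated"). The tree is built so that (i) the set $T^s$ of candidate successors of a node $s$ — those $i$ for which the family of sets in $\I_k$ containing $i$ together with all points already chosen along $s$ is still not $\I$-countably generated — lies in $\I^+$ (otherwise $\I_k$ would be covered by countably many $\I$-sets), which is exactly what permits choosing a finite cluster $F_s\subseteq T^s$ with $\phi(F_s)>n+1$; and (ii) every finite initial segment of a branch is, by construction, contained in a single member of $\I_k$, so the whole branch has $\phi$-mass at most $k$ plus the mass of one extra point, by lower semicontinuity. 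Point (ii), not any nesting of $\I$-sets, is why branches land in $\I$. (Also note the paper proves this lemma for arbitrary $\bf{\Sigma^0_2}$ ideals; non-pathologicality only enters in the subsequent lemma, so you are right that it should not appear here.)
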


\begin{proof} 
In this proof, we will write that a family $\cA\subseteq\cP(\omega)$ is $\I$-countably generated if there is a countable family $\cB\subseteq\I$ such that for each $A\in\cA$ there is $B\in\cB$ with $A\subseteq B$. Note that a countable union of $\I$-countably generated families is $\I$-countably generated.

Define $\I_k=\{A\in\I:\ \phi(A)\leq k\}$, for $k\in\omega$. Then $\bigcup_{k\in\omega}\I_k=\I$. Since $\I$ is not $\I$-countably generated (otherwise it would be countably generated), there is $k\in\omega$ such that $\I_k$ is not $\I$-countably generated. 

We will inductively construct for each $n\in\omega$ a set $G_n\subseteq\omega^n$, $c(n)\in\omega$ and finite sets $F_s$, for all $s\in G_n$, satisfying the following properties:
\begin{itemize}
    \item[(a)] $G_0=\{\emptyset\}$ and $G_n=\{s\in\omega^n:\ s(0)\in F_\emptyset\text{ and }s(i)\in F_{s\restriction i}\text{ for all }1\leq i<n\}$, if $n\in\omega\setminus\{0\}$;
    \item[(b)] $\phi(F_s)>n+1$, for all $s\in G_n$;
    \item[(c)] $F_s\cap F_t=\emptyset$ and $0\notin F_s$, for all $s,t\in\bigcup_{m\leq n}G_m$, $s\neq t$;
    \item[(d)] $c(n)=|F_s|$, for all $s\in G_n$;
    \item[(e)] $A_{s^\frown(i)}=\{A\in\I_k:\ i\in A\text{ and }s(j)\in A\text{ for all }j<n\}$ is not $\I$-countably generated, for each $s\in G_n$ and each $i\in F_s$.
\end{itemize}

At the first step, put $G_0=\{\emptyset\}$. For each $i\in\omega$, let $A_{(i)}=\{A\in\I_k:\ i\in A\}$. Denote
$$T^\emptyset=\{i\in\omega:\ A_{(i)}\text{ is not }\I\text{-countably generated}\}.$$ 
Observe that $T^\emptyset\in\I^+$. Indeed, otherwise $\cP(T^\emptyset)$ would be $\I$-countably generated (even by one set) and we would get that $\I_k=(\cP(T^\emptyset)\cap\I_k)\cup\bigcup_{i\in\omega\setminus T^\emptyset} A_{(i)}$ is $\I$-countably generated, which is a contradiction. Since $T^\emptyset\in\I^+$, we can find a finite $F_\emptyset\subseteq T^\emptyset$ such that $0\notin F_\emptyset$ and $\phi(F_\emptyset)>1$. Put $c(0)=|F_\emptyset|$. Then items (a), (b), (c), (d) and (e) are met. 

In the $n$th step, if all $c(m)$ and $G_m$, for $m<n$, and all $F_s$, for $s\in \bigcup_{m<n}G_m$, are defined, put $G_n=\{s\in\omega^n:\ s(0)\in F_\emptyset\text{ and }s(i)\in F_{s\restriction i}\text{ for all }1\leq i<n\}$. Note that $G_n$ is finite as each $F_s$, for $s\in \bigcup_{m<n}G_m$, is finite. Enumerate $G_n=\{s_j:\ j<l_n\}$. We will inductively construct finite sets $F'_{s_j}$, for $j<l_n$, satisfying items (b), (c) and (e).

By item (e) (applied to $n-1$, $s_0\restriction (n-1)\in G_{n-1}$ and $s_0(n-1)\in F_{s\restriction (n-1)}$), the set $A_{s_0}=\{A\in\I_k:\ s_0(i)\in A\text{ for all }i<n\}$ is not $\I$-countably generated. For each $i\in\omega$, let $A_{s_0^\frown(i)}=\{A\in A_{s_0}:\ i\in A\}$. Denote
$$T^{s_0}=\{i\in\omega:\ A_{s_0^\frown(i)}\text{ is not }\I\text{-countably generated}\}.$$ 
Observe that $T^{s_0}\in\I^+$. Indeed, otherwise we would get that $A_{s_0}=(\cP(T^{s_0})\cap A_{s_0})\cup\bigcup_{i\in\omega\setminus T^{s_0}} A_{s_0^\frown(i)}$ is $\I$-countably generated, which is a contradiction. Since $T^{s_0}\in\I^+$, also $T^{s_0}\setminus K_{s_0}\in\I^+$, where $K_{s_0}=\{0\}\cup\bigcup_{m<n}\bigcup_{s\in G_m}F_s\in\Fin$. Thus, we can find a finite set $F'_{s_0}\subseteq T^{s_0}\setminus K_{s_0}$ such that $\phi(F'_{s_0})>n+1$. 

Assume that sets $F'_{s_j}$, for $j<p<l_n$, are defined. By item (e), the set $A_{s_p}=\{A\in\I_k:\ s_p(i)\in A\text{ for all }i<n\}$ is not $\I$-countably generated. Similarly as before, for each $i\in\omega$, let $A_{s_p^\frown(i)}=\{A\in A_{s_p}:\ i\in A\}$ and
$$T^{s_p}=\{i\in\omega:\ A_{s_p^\frown(i)}\text{ is not }\I\text{-countably generated}\}.$$ 
Since $T^{s_p}\in\I^+$ (for the same reason as above) and 
$$K_{s_p}=\{0\}\cup\left(\bigcup_{m<n}\bigcup_{s\in G_m}F_s\right)\cup\left(\bigcup_{j<p}F'_{s_j}\right)$$
is finite, $T^{s_p}\setminus K_{s_p}\in\I^+$ and we can find a finite set $F'_{s_p}\subseteq T^{s_p}\setminus K_{s_p}$ such that $\phi(F'_{s_p})>n+1$.

Once all $F'_{s_j}$ are defined, let $c(n)=\max\{|F'_{s_j}|:\ j<l_n\}$. We will define sets $F_{s_j}$, for $j<l_n$, inductively. Let $F_{s_0}$ be any finite subset of $T^{s_0}\setminus K_{s_0}$ of cardinality $c(n)$ containing $F'_{s_0}$ and disjoint with $\bigcup_{0<j<l_n}F'_{s_j}$ (which exists as $T^{s_0}\setminus K_{s_0}\in\I^+$ and $\bigcup_{j<l_n}F'_{s_j}$ is finite). If $F_{s_j}$, for all $j<p<l_n$, are defined, let $F_{s_p}$ be any finite subset of $T^{s_p}\setminus K_{s_p}$ of cardinality $c(n)$ which contains $F'_{s_p}$ and is disjoint with $\bigcup_{j<p}F_{s_j}\cup\bigcup_{p<j<l_n}F'_{s_j}$.

It is easy to verify that all the required conditions are met. This finishes the inductive construction of $c\in\omega^\omega$ and sets $G_n$ and $F_s$.

Define $A=\{0\}\cup\bigcup_{n\in\omega}\bigcup_{s\in G_n}F_s$. Then clearly $A\in\I^+$. 

Now we will define $f:A\to T_c$. Fix any $a\in A$. If $a=0$, put $f(0)=\emptyset$. Otherwise, there are $n\in\omega$ and $s\in G_n$ such that $a\in F_s$. Note that $n$ and $s$ are unique (by item (c)). Let $f(a)\in T_c\cap\omega^{n+1}$ be such that:
\begin{itemize}
    \item $(f(a))(n)<c(n)$ is such that $a$ is the $(f(a))(n)$th element of $F_s$;
    \item $(f(a))(i)<c(i)$ is such that $s(i)$ is the $(f(a))(i)$th element of $F_{s\restriction i}$, for all $i<n$.
\end{itemize}

We need to check that $f$ defined in this way is as needed. It is easy to see that $f$ is an injection.

Fix any $t=(t(0),t(1),\ldots,t(n-1))\in T_c\cap\omega^{n}$. If $t=\emptyset$, then $f(0)=t$. Otherwise, let $s\in G_{n}$ be such that:
\begin{itemize}
    \item $s(0)$ is the $t(0)$th element of $F_\emptyset$;
    \item $s(i)$ is the $t(i)$th element of $F_{s\restriction i}$, for all $i<n$;
\end{itemize}
(such $s$ exists by item (d)). Then $f(s(n-1))=t$, hence $f$ is a surjection. Moreover, observe that $\phi(f^{-1}[\{t^\frown(i):\ i<c(n)\}])>n+1$ as $f^{-1}[\{t^\frown(i):\ i<c(n)\}]=F_s$ and $\phi(F_s)>n+1$ (by item (b)). 

Fix now any $x\in\hat{T}_c$. Let $F\subseteq f^{-1}[A_x]$ be any finite set such that $0\notin F$. We will show that $\phi(F)\leq k$, which will imply that $\phi(f^{-1}[A_x])\leq k+\phi(\{0\})$ and finish the proof. 

There is $n\in\omega$ such that $F\subseteq M=f^{-1}[\{x\restriction i:\ i\leq n+1\}]$. Let $m_0$ be the unique element of $M\cap F_\emptyset$ (which exists as $f^{-1}(x(0))\in F_\emptyset$ and $f^{-1}(x(i))\in A\setminus F_\emptyset$, for all $i>0$) and $m_{i+1}$ be the unique element of $M\cap F_{(m_0,\ldots,m_i)}$, for all $i<n$. Observe that $(m_0,\ldots,m_{n-1})\in G_{n}$ and $m_n\in F_{(m_0,\ldots,m_{n-1})}$. Thus, by item (e), $\{A\in\I_k:\ m_i\in A\text{ for all }i\leq n\}$ is not $\I$-countably generated. In particular, there is some $A\in\I_k$ such that $M=\{m_i:\ i\leq n\}\subseteq A$. Since $A\in\I_k$, we have $\phi(F)\leq\phi(M)\leq\phi(A)\leq k$. This finishes the entire proof.
\end{proof}

\begin{lemma}
\label{lem2Fsigma}
Every non-pathological $\bf{\Sigma^0_2}$ ideal which is not countably generated, is not Egorov.
\end{lemma}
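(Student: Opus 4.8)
The plan is to combine Lemma \ref{lem1Fsigma} with the non-pathology hypothesis and Theorem \ref{onehalf} to build a counterexample to the Egorov property. So suppose $\I = \Fin(\phi)$ is a non-pathological $\bf{\Sigma^0_2}$ ideal that is not countably generated; by Proposition \ref{restrictions} it suffices to show that $\I\restriction A$ is not Egorov, where $A\in\I^+$, $c\in\omega^\omega$, and $f\colon A\to T_c$ are as provided by Lemma \ref{lem1Fsigma}. Transporting everything along the bijection $f$, we may work directly with the ideal $\J$ on $T_c$ generated by (the pushforward of) $\phi$; it has the properties that $\phi\bigl(\{t^\frown(i):i<c(n)\}\bigr)>n+1$ for each $t\in T_c\cap\omega^n$, while $A_x = \{x\restriction n : n\in\omega\}\in\J$ for every branch $x\in\hat T_c$.

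The key idea is to use the level sets of the tree $T_c$ to index subsets $X_t\subseteq[0,1]$ so that the ``branch'' sets $A_x$ are exactly the index sets $\{t : x\in X_t\}$ for points $x$ placed along branches. Concretely, I would first fix, for each $n$ and each $t\in T_c\cap\omega^n$, a partition of a suitable subinterval into $c(n)$ many subintervals indexed by the immediate successors $t^\frown(i)$, arranged so that nesting down a branch $x$ shrinks to a single point $p_x$. Assign to the node $t = x\restriction(n{+}1)$ the interval $I_t$ surrounding $p_x$ at level $n$; then $x\in I_t$ iff $t\subseteq x$, i.e.\ iff $t\in A_x$. Setting $X_t = I_t$, for every point $p_x$ the set $\{t : p_x\in X_t\}$ equals $A_x\in\J$, so $(\chi_{X_t})$ is $\J$-pointwise convergent to $0$ at all these points; one chooses the intervals densely enough (a Cantor-type construction) that the complement of the set of such $p_x$ has measure zero, securing $\J$-pointwise convergence to $0$ everywhere up to a null set, which can be absorbed. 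The measure bookkeeping: at level $n$ the $c(n)$ successor-intervals of a node $t$ should have \emph{equal} measure, a fraction roughly $1/c(n)$ of $I_t$'s measure. Now suppose toward a contradiction, via Theorem \ref{onehalf}, that there is a measurable $M\subseteq[0,1]$ with $\lambda(M)\le\tfrac12$ and $B := \{t\in T_c : X_t\not\subseteq M\}\in\J$. This is where non-pathology enters: there is a measure $\mu\le\phi$ on $T_c$ with $\mu(B)$ close to $\phi(B)<\infty$; equivalently, $\phi(T_c\setminus B)=\infty$, so $T_c\setminus B$ contains, at infinitely many levels $n$, a full successor-family $\{t^\frown(i):i<c(n)\}$ of some node $t$ on which $\phi$ exceeds $n+1$ — hence by non-pathology $\mu$ of some such family is large. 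For every such ``good'' node $t$, all $c(n)$ successor-intervals satisfy $X_{t^\frown(i)}\subseteq M$, so $M\supseteq I_t$ up to the (small, controlled) error of the parts of $I_t$ not covered by successor-intervals; chaining good nodes down a branch forces $\lambda(M)$ to pick up a definite amount of mass near the relevant points. The arithmetic is arranged so that accumulating good families across infinitely many levels drives $\lambda(M)>\tfrac12$, the desired contradiction.

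The main obstacle is the bookkeeping that converts the combinatorial statement ``$T_c\setminus B$ has infinite $\phi$-submeasure, hence (by non-pathology) carries a sub-$\phi$ measure with large mass concentrated on full successor-families'' into a genuine \emph{lower bound on $\lambda(M)$}. Two things must be reconciled: the tree-combinatorial size (governed by $\phi$, where a single successor-family at level $n$ already has $\phi$-mass $>n+1$, which is \emph{large}) and the geometric size (governed by $\lambda$, where each successor-interval has \emph{small} measure $\sim 1/c(n)$ of its parent). The trick is that we do not need one node to contribute much measure; we need that along almost every branch, \emph{infinitely often} a good successor-family forces $M$ to contain essentially the whole parent interval at that level, and that the branches passing through good nodes themselves have full measure in $[0,1]$ — this is where the measure $\mu\le\phi$ supplied by non-pathology is used, to guarantee good nodes are not merely present but abundant enough (in the $\mu$-sense, hence in a way compatible with Lebesgue mass once the geometric assignment is chosen with $\mu$ in mind). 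A secondary technical point is ensuring the interval assignment is simultaneously (i) nested down branches, (ii) measure-splitting evenly at each node, and (iii) exhausting $[0,1]$ up to a null set; a standard Cantor-scheme construction handles all three, but it must be set up before the $X_t$ are defined so that the later counting argument has the measures it needs.
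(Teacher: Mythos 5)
Your skeleton is the right one and matches the paper's: invoke Lemma \ref{lem1Fsigma}, use non-pathology to extract measures $\mu_t\le\phi$ with $\mu_t(G_t)=n+1$ on each successor family $G_t=f^{-1}[\{t^\frown(i):i<c(n)\}]$, build a nested scheme of intervals $I_t$ indexed by $T_c$, take the characteristic functions of the $I_{f(n)}$, and argue that no set of positive measure can witness uniform convergence. But the proof has a genuine gap exactly at the step you flag as "the arithmetic is arranged so that...": you never supply the arithmetic, and the design choice you do commit to --- splitting each $I_t$ into $c(n)$ successor intervals of \emph{equal} measure --- is the wrong one and blocks the argument. With equal splitting, knowing that a set $N$ with $\lambda(N)\ge\frac12$ meets a proportion $\alpha$ of $I_t$ only tells you that roughly $\alpha c(n)$ of the children are hit; since $\mu_t$ (and $\phi$) may concentrate its mass $n+1$ on a small subfamily of $G_t$ that $N$ happens to avoid, you get no lower bound on $\mu_t$ or $\phi$ of the set of hit children, and subadditivity of $\phi$ gives nothing in that direction either. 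Your subsequent sketch (good nodes, chaining down branches, "$\phi(T_c\setminus B)=\infty$ yields full successor families") does not repair this: a full successor family outside $B$ gives $I_t\subseteq M$ up to the leftover of $I_t$, which pushes $\lambda(M)$ up only if you already control how Lebesgue mass relates to $\mu_t$-mass.

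The missing idea is to make the geometry carry the measure $\mu_t$: set $\lambda(I_{t^\frown(j)})=\lambda(I_t)\,\mu_t(\{f^{-1}(t^\frown(j))\})/(n+1)$, so that the children exhaust the parent in measure (because $\mu_t(G_t)=n+1$) and, for any measurable $N$ with $\lambda(N)=\alpha>0$ and any $m$, choosing $k$ with $(k+1)\alpha>m$ and a level-$k$ node $t$ with $\lambda(N\cap I_t)\ge\alpha\lambda(I_t)$ (pigeonhole), the set $J$ of children met by $N$ satisfies
\[
\phi\bigl(\{n:\ I_{f(n)}\cap N\ne\emptyset\}\bigr)\ \ge\ \sum_{j\in J}\mu_t(\{f^{-1}(t^\frown(j))\})\ =\ \frac{k+1}{\lambda(I_t)}\sum_{j\in J}\lambda(I_{t^\frown(j)})\ \ge\ (k+1)\alpha\ >\ m .
\]
This one identity is the whole content of the lemma beyond Lemma \ref{lem1Fsigma}, and it is unavailable under equal splitting. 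Two smaller points: the detour through Theorem \ref{onehalf} is unnecessary here (the bound above already defeats every $\eta<1$ directly), and you cannot "absorb" a null set on which $\I$-pointwise convergence fails --- convergence must hold at every point, though this is easily fixed since with the proportional splitting every point is either an eventual endpoint (finitely many indices) or lies on a branch $\hat x$, giving index set $f^{-1}[A_{\hat x}]\in\I$.
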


\begin{proof}
This proof is based on ideas from \cite[Theorem 3.4]{Nikodem}.

Let $\I=\Fin(\phi)$ be a non-pathological $\bf{\Sigma^0_2}$ ideal which is not countably generated. By Lemma \ref{lem1Fsigma}, there are $A\in\I^+$, $c\in\omega^\omega$ and a bijection $f:A\to T_c$ such that:
\begin{itemize}
    \item $\phi(G_t)>n+1$, where $G_t=f^{-1}[\{t^\frown(i):\ i<c(n)\}]$, for every $n\in\omega$ and $t\in T_c\cap\omega^n$;
    \item $f^{-1}[A_x]\in\I\restriction A$, for every $x\in \hat{T}_c$.
\end{itemize}
Since $\phi$ is non-pathological, for each $n\in\omega$ and $t\in T_c\cap\omega^n$ there is a measure $\mu_t$ on $\omega$ such that $\mu_t\leq \phi$ and $\mu_t(G_t)=n+1$.

We will inductively define open intervals $I_t\subseteq[0,1]$, for all $t\in T_c$. Start with any family $\{I_{(j)}:\ j<c(0)\}$ of open pairwise disjoint subintervals of $[0,1]$ such that the length of $I_{(j)}$ is $\mu_\emptyset(\{f^{-1}((j))\})$, for each $j<c(0)$. Note that this is possible, since $\sum_{j<c(0)}\mu_\emptyset(\{f^{-1}((j))\})=\mu_\emptyset(G_\emptyset)=1$. If $I_t$, for all $t\in T_c\cap\omega^n$, are already defined, for each $t\in T_c\cap\omega^n$ let $\{I_{t^\frown(j)}:\ j<c(n)\}$ be any family of open pairwise disjoint subintervals of $I_t$ such that:
$$\lambda(I_{t^\frown(j)})=\lambda(I_t)\frac{\mu_t(\{f^{-1}(t^\frown(j))\})}{n+1},$$ 
for each $j<c(n)$. Note that $\sum_{j<c(n)}\lambda(I_{t^\frown(j)})=\lambda(I_t)$.

Define a sequence $(g_n)_{n\in\omega}$ of real-valued functions on $[0,1]$ by:
\begin{itemize}
    \item if $n\in A$ then $g_n$ is the characteristic function of $I_{f(n)}$;
    \item if $n\notin A$ then $g_n$ is constantly equal to $0$.
\end{itemize}

Observe that $(g_n)_{n\in\omega}$ is $\I$-pointwise convergent to $0$, as for each $x\in[0,1]$ either $x$ is an endpoint of some $I_t$ (hence, an endpoint of some $I_{s}$ with $s\in T_c\cap\omega^{|t|+n}$ and $t\subseteq  s$, for every $n\in\omega$) and $\{n\in\omega:\ g_n(x)=1\}$ is finite, or there is $\hat{x}\in \hat{T}_c$ such that $\{n\in\omega:\ g_n(x)=1\}=f^{-1}[A_{\hat{x}}]\in\I$.

Let $M\subseteq[0,1]$ be measurable with $\lambda(M)=\alpha>0$. Now we will show that $(g_n\restriction M)_{n\in\omega}$ does not converge $\I$-uniformly to $f$, i.e., that $\{n\in\omega:\ \exists_{x\in M}\ g_n(x)=1\}\notin\I$. Actually, we need to prove that $\phi(\{n\in\omega:\ \exists_{x\in M}\ g_n(x)=1\})>m$, for every $m\in\omega$. 

Fix $m\in\omega$ and let $k\in\omega$ be such that $(k+1)\alpha>m$. Observe that there is $t\in T_c\cap\omega^k$ such that $\lambda(M\cap I_t)\geq \alpha \lambda(I_t)$. Then $\sum_{j\in J} \lambda(I_{t^\frown(j)})\geq \lambda(M\cap I_t)\geq \alpha \lambda(I_t)$, where $J=\{j<c(n):\ M\cap I_{t^\frown(j)}\neq\emptyset\}$. Thus, 
$$\phi(\{n\in\omega:\ \exists_{x\in M}\ g_n(x)=1\})\geq \mu_t(\{n\in G_t:\ \exists_{x\in M}\ g_n(x)=1\})=$$
$$=\mu_t(\{f^{-1}(t^\frown(j)):\ j\in J\})=\sum_{j\in J}\mu_t(\{f^{-1}(t^\frown(j))\})=$$
$$=\frac{k+1}{\lambda(I_t)}\sum_{j\in J} \lambda(I_{t^\frown(j)})\geq (k+1)\alpha>m.$$
This finishes the proof.
\end{proof}

\begin{theorem}
\label{thmFsigma}
Let $\I$ be a non-pathological $\bf{\Sigma^0_2}$ ideal. The following are equivalent:
\begin{itemize}
    \item[(a)] $\I$ is Egorov;
    \item[(b)] $\I$ is countably generated;
    \item[(c)] $\I$ is isomorphic to $\Fin$, $\Fin\oplus\cP(\omega)$ or $\Fin\otimes\{\emptyset\}$.
\end{itemize}
\end{theorem}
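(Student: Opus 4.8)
The plan is to prove the cyclic chain of implications $(c)\Rightarrow(a)\Rightarrow(b)\Rightarrow(c)$, using the two lemmas just established and results quoted in Remark~\ref{rem-ex}.

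First, $(c)\Rightarrow(a)$ is immediate: each of $\Fin$, $\Fin\oplus\cP(\omega)$ and $\Fin\otimes\{\emptyset\}$ is countably generated (the first by the singletons, the second by singletons together with $\{1\}\times\omega$, the third by the columns $\{s\}\times\omega$ for $s\in\omega$), and by \cite[Corollary~2.9]{KadetsLeonov} (cited in Remark~\ref{rem-ex}) every countably generated ideal is Egorov. Being Egorov is invariant under isomorphism, so this settles $(c)\Rightarrow(a)$. In fact this argument shows more directly $(b)\Rightarrow(a)$, but we only need the implications around the cycle.

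Next, $(a)\Rightarrow(b)$ is the contrapositive of Lemma~\ref{lem2Fsigma}: if $\I$ is a non-pathological $\bf{\Sigma^0_2}$ ideal that is \emph{not} countably generated, then Lemma~\ref{lem2Fsigma} says $\I$ is not Egorov. So if $\I$ is Egorov it must be countably generated. This is the only place where non-pathologicality is used, and it is already done for us.

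Finally, $(b)\Rightarrow(c)$ is the purely combinatorial classification of countably generated ideals on $\omega$ up to isomorphism. Suppose $\I$ is generated by a countable family, which we may take to be an increasing chain $A_0\subseteq A_1\subseteq\cdots$ of sets in $\I$ (replace a countable generating family by the unions of its finite subfamilies; this stays inside $\I$ and still generates). Since $\omega\notin\I$, the set $B=\omega\setminus\bigcup_n A_n$ is nonempty, and two cases arise according to its size. If $B$ is infinite, then on $B$ the ideal $\I$ restricts to $\Fin\restriction B$; examine whether the $A_n$ stabilize. If the chain is eventually constant, say equal to a set $A$, then $\I=\{X:X\subseteq^{?}\}$ — more precisely $\I$ consists exactly of subsets of $A\cup(\text{finite})$, and since $\omega\setminus A$ is infinite this is isomorphic to $\Fin$ if $A$ is finite, and to $\Fin\oplus\cP(\omega)$ if $A$ is infinite (split $\omega$ into $A$ and its complement). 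If the chain is not eventually constant, one refines it to a strictly increasing sequence with infinite complement of each term and infinite increments, and checks the resulting ideal is isomorphic to $\Fin\otimes\{\emptyset\}$ via a bijection sending the $n$-th increment $A_n\setminus A_{n-1}$ to the $n$-th column $\{n\}\times\omega$ (padding finite increments, and using that sets in $\I$ meet only finitely many increments up to a finite error). The bookkeeping here — handling whether increments and complements are finite or infinite, and verifying the bijection transports $\I$ exactly onto the target ideal — is routine but is the only part requiring genuine care; I expect it to be the main (modest) obstacle, essentially an exercise in normalizing a countable generating chain. One should also double-check that these three ideals are pairwise non-isomorphic (e.g.\ $\Fin$ is the only one not containing an infinite set, and $\Fin\oplus\cP(\omega)$ is a P-ideal while $\Fin\otimes\{\emptyset\}$ is not), so that the trichotomy in (c) is genuinely a list of three distinct isomorphism types, matching the claim in the abstract.
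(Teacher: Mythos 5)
Your overall architecture coincides with the paper's: the hard implication (a)$\Rightarrow$(b) is exactly Lemma~\ref{lem2Fsigma}, the implication from countable generation to Egorov is \cite[Corollary~2.9]{KadetsLeonov}, and the remaining content is the combinatorial classification (b)$\Rightarrow$(c). Your extra observation that the three ideals are pairwise non-isomorphic is correct and harmless, though not needed for the stated equivalence.

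There is, however, a concrete error in your sketch of (b)$\Rightarrow$(c). You claim that since $\omega\notin\I$, the set $B=\omega\setminus\bigcup_n A_n$ is nonempty. This is a non sequitur, and under the paper's definition of ``generated'' it is in fact always false: every member of $\I$ must be covered by finitely many generators, and since $\Fin\subseteq\I$ every singleton is so covered, hence $\bigcup_n A_n=\omega$ and $B=\emptyset$. (Already $\Fin$ itself, generated by the increasing chain $A_n=\{0,\dots,n\}$, is a counterexample to your claim.) Consequently your top-level case split on whether $B$ is infinite addresses a situation that never occurs, and the case $B=\emptyset$ --- the only one that actually arises --- is not treated. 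Relatedly, an increasing generating chain can never be ``eventually constant'': if $A_n=A$ for all large $n$, then $\Fin\subseteq\I$ forces $A=\omega$, contradicting $\omega\notin\I$. The correct dichotomy is not about $B$ or about stabilization of the chain, but about how many of the disjointified pieces are infinite: set $B_n=A_n\setminus\bigcup_{i<n}A_i$, which are pairwise disjoint, still generate $\I$, and partition $\omega$; letting $T=\{n:B_n\in\Fin^+\}$, one gets $\I\cong\Fin$ if $T=\emptyset$, $\I\cong\Fin\oplus\cP(\omega)$ if $T$ is finite and nonempty, and $\I\cong\Fin\otimes\{\emptyset\}$ if $T$ is infinite (with a small subcase on whether $\omega\setminus\bigcup_{n\in T}B_n$ is finite or infinite when choosing the witnessing bijection). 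The verification that these bijections transport $\I$ onto the target ideals --- which you defer as ``routine bookkeeping'' --- is the entire substance of (b)$\Rightarrow$(c), so as written the proposal both misstates the case analysis and omits the part that carries the proof.
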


\begin{proof}
(a)$\implies$(b): It follows from Lemma \ref{lem2Fsigma}.

(b)$\implies$(a): This is \cite[Corollary 2.9]{KadetsLeonov}.

(c)$\implies$(b): It is obvious that each of the ideals $\Fin$, $\Fin\oplus\cP(\omega)$ and $\Fin\otimes\{\emptyset\}$ is countably generated and one can easily check that an isomorphic copy of a countably generated ideal is countably generated.

(b)$\implies$(c): Let $\{A_n:\ n\in\omega\}\subseteq\I$ be the countable family generating $\I$. Define $B_n=A_n\setminus \bigcup_{i<n}A_i$, for each $n\in\omega$, and observe that $B_n\cap B_m=\emptyset$, for all $n\neq m$, and $\{B_n:\ n\in\omega\}\subseteq\I$ is also a countable family generating $\I$. Moreover, $\bigcup_{n\in\omega}B_n=\omega$, since $\Fin\subseteq\I$. 

Define $T=\{n\in\omega:\ B_n\in\Fin^+\}$. If $T=\emptyset$, then $\I$ is isomorphic to $\Fin$. If $T\neq\emptyset$, but $T\in\Fin$, then $\I$ is isomorphic to $\Fin\oplus\cP(\omega)$ (as witnessed by any bijection $f:\omega\to(2\times\omega)$ such that $f[\bigcup_{n\in T}B_n]=\{1\}\times\omega$). If $T\in\Fin^+$, then enumerate $T=\{t_n:\ n\in\omega\}$. If $\omega\setminus\bigcup_{n\in T}B_n$ is finite, let $f:\omega\to\omega^2$ be any bijection such that $f[B_{t_0}\cup(\omega\setminus\bigcup_{n\in T}B_n)]=\{0\}\times\omega$ and $f[B_{t_n}]=\{n\}\times\omega$ for $n\in\omega\setminus\{0\}$. If $\omega\setminus\bigcup_{n\in T}B_n$ is infinite, let $f:\omega\to\omega^2$ be any bijection such that $f[\omega\setminus\bigcup_{n\in T}B_n]=\omega\times\{0\}$ and $f[B_{t_n}]=\{n\}\times(\omega\setminus\{0\})$ for $n\in\omega$. In both cases, $f$ witnesses that $\I$ is isomorphic to $\Fin\otimes\{\emptyset\}$.
\end{proof}

\begin{corollary}
\label{corFsigma}
If $\I$ is a tall non-pathological $\bf{\Sigma^0_2}$ ideal, then it is not Egorov.
\end{corollary}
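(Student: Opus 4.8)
The plan is to combine Theorem \ref{thmFsigma} with the elementary fact that a countably generated ideal is never tall; the corollary then follows by contraposition.

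First I would record why countable generation precludes tallness. Suppose $\I$ is generated by $\{A_n:\ n\in\omega\}$, and set $A_n'=\bigcup_{i\leq n}A_i$, so that $\{A_n':\ n\in\omega\}$ is an increasing family still generating $\I$. Since $\I$ is a proper ideal, no $A_n'$ is cofinite (otherwise $\omega$ would be the union of the finite set $\omega\setminus A_n'$ with $A_n'$, both in $\I$), so each $\omega\setminus A_n'$ is infinite. Hence one can choose $b_0<b_1<\cdots$ with $b_n\in\omega\setminus A_n'$ for every $n$, and put $B=\{b_n:\ n\in\omega\}$. If some infinite $C\subseteq B$ belonged to $\I$, then $C$ would be contained in a finite union of members of the generating family, i.e. $C\subseteq A_m'$ for some $m$; but $C$ contains $b_n$ for some $n\geq m$, and $b_n\notin A_n'\supseteq A_m'$, a contradiction. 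Thus $B$ witnesses that $\I$ is not tall.

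Now I would argue by contraposition. Assume $\I$ is a non-pathological $\bf{\Sigma^0_2}$ ideal which is Egorov. By the implication (a)$\implies$(b) of Theorem \ref{thmFsigma}, $\I$ is countably generated, and hence not tall by the previous paragraph. Consequently a tall non-pathological $\bf{\Sigma^0_2}$ ideal cannot be Egorov.

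There is essentially no obstacle here: the only point needing an argument is the standard observation that countable generation rules out tallness, which in turn rests only on the remark that a proper ideal contains no cofinite set. Alternatively, one could bypass part (b) of Theorem \ref{thmFsigma} and verify directly from part (c) that none of $\Fin$, $\Fin\oplus\cP(\omega)$, $\Fin\otimes\{\emptyset\}$ is tall (for instance $\{0\}\times\omega$ witnesses this for $\Fin\oplus\cP(\omega)$, and the graph $\{(n,0):\ n\in\omega\}$ for $\Fin\otimes\{\emptyset\}$), but the uniform argument via countable generation is the shortest route.
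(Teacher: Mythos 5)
Your proof is correct and follows essentially the same route as the paper: both deduce the corollary immediately from Theorem \ref{thmFsigma}. The only difference is cosmetic --- the paper invokes part (c) and observes that none of $\Fin$, $\Fin\oplus\cP(\omega)$, $\Fin\otimes\{\emptyset\}$ is tall, whereas you invoke part (b) together with a correct, standard diagonal argument showing that no countably generated ideal is tall (and you even note the paper's variant as an alternative).
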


\begin{proof}
It follows from Theorem \ref{thmFsigma}, as $\Fin$, $\Fin\oplus\cP(\omega)$ and $\Fin\otimes\{\emptyset\}$ all are not tall.
\end{proof}

\begin{corollary}
\label{GenEgorov-cor1}
Let $\I$ be a non-pathological $\bf{\Sigma^0_2}$ ideal. The following are equivalent:
\begin{itemize}
    \item[(a)] $\I$ is Egorov in the sense of Kadets and Leonov, i.e., for every finite measure space $(X,\cM,\mu)$, if $\eta>0$ and $(f_n)_{n\in\omega}$ is a sequence of measurable real-valued functions defined on $X$ which is $\I$-pointwise convergent to a measurable function $f:X\to\mathbb{R}$, then there is $M\in\cM$ such that $\mu(X\setminus M)\leq\eta$ and $(f_n\restriction M)_{n\in\omega}$ is $\I$-uniformly convergent to $f\restriction M$;
    \item[(b)] under $\non(\cN)<\bb$, $\I$ is generalized Egorov in the sense of Korch and Repick\'{y}, i.e., if $\eta>0$ and $(f_n)_{n\in\omega}$ is a sequence of (non-necessarily measurable) real-valued functions defined on $[0,1]$ which is $\I$-pointwise convergent to a (non-necessarily measurable) function $f:[0,1]\to\mathbb{R}$, then there is $M\subseteq[0,1]$ such that $\lambda^\star(M)\geq 1-\eta$ (here $\lambda^\star$ is the Lebesgue's outer measure) and $(f_n\restriction M)_{n\in\omega}$ is $\I$-uniformly convergent to $f\restriction M$;
    \item[(c)] $\I$ is countably generated;
    \item[(d)] $\I$ is isomorphic to $\Fin$, $\Fin\oplus\cP(\omega)$ or $\Fin\otimes\{\emptyset\}$.
\end{itemize}
\end{corollary}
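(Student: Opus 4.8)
The plan is to deduce everything from Theorem~\ref{thmFsigma}, which already yields (c)$\iff$(d) and, more importantly, identifies being countably generated with being Egorov in the sense of Definition~\ref{def} for non-pathological $\mathbf{\Sigma}^0_2$ ideals. Since both (a) and (b) are formally stronger than Definition~\ref{def} (the observation recorded in the Introduction), the corollary reduces to two new sufficiency implications, (c)$\Rightarrow$(a) and (c)$\Rightarrow$(b), together with the routine specializations (a)$\Rightarrow$(c) and (b)$\Rightarrow$(c).

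First, (c)$\iff$(d) is contained in Theorem~\ref{thmFsigma}. For (c)$\Rightarrow$(a) I invoke \cite[Corollary~2.9]{KadetsLeonov}, which states that every countably generated ideal is Egorov in the sense of Kadets and Leonov, i.e.\ with respect to every finite measure space. Conversely, (a)$\Rightarrow$(c): specializing the Kadets--Leonov condition to the finite measure space $([0,1],\mathcal L,\lambda)$ of Lebesgue measurable sets gives exactly Definition~\ref{def}, so $\I$ is Egorov, and Theorem~\ref{thmFsigma} forces it to be countably generated.

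For (b)$\Rightarrow$(c) I first verify that a generalized Egorov ideal is Egorov in the sense of Definition~\ref{def}. Given measurable $f_n$ with $(f_n)_{n\in\omega}$ $\I$-pointwise convergent to a measurable $f$ and $\eta>0$, apply the generalized Egorov property with $\eta/2$ to get $M\subseteq[0,1]$ with $\lambda^\star(M)\ge 1-\eta/2$ on which $(f_n\restriction M)_{n\in\omega}$ is $\I$-uniformly convergent to $f\restriction M$, and then pass to a Lebesgue-measurable set of the same outer measure. Writing $E_{n,k}=\{x\in[0,1]:|f_n(x)-f(x)|\ge 1/k\}$, let $\widetilde M$ be a measurable hull of $M$ and let $N$ be the union of the sets $\widetilde M\cap E_{n,k}$ over all $k\ge 1$ and all $n\in\omega$ with $M\cap E_{n,k}=\emptyset$. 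Each such $\widetilde M\cap E_{n,k}$ is null by minimality of the hull (in that case $M\subseteq[0,1]\setminus E_{n,k}$, which is measurable), and there are only countably many such pairs $(n,k)$, so $N$ is null; hence $M_1:=\widetilde M\setminus N$ is measurable with $\lambda(M_1)=\lambda^\star(M)\ge 1-\eta/2>1-\eta$, and $(f_n\restriction M_1)_{n\in\omega}$ is still $\I$-uniformly convergent to $f\restriction M_1$, because for each $k$ one has $\{n:M_1\cap E_{n,k}\ne\emptyset\}\subseteq\{n:M\cap E_{n,k}\ne\emptyset\}\in\I$. Thus $\I$ is Egorov, and Theorem~\ref{thmFsigma} gives that it is countably generated. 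The remaining implication (c)$\Rightarrow$(b), under the hypothesis $\non(\cN)<\bb$, is exactly what the results of Korch \cite{Korch} and Repick\'y \cite{Repicky} provide: under $\non(\cN)<\bb$ the generalized Egorov theorem holds for every countably generated ideal.

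I expect the only delicate point to be the step in (b)$\Rightarrow$(c) that upgrades a set of large \emph{outer} measure, delivered by the generalized Egorov conclusion, to a genuinely Lebesgue-measurable set of large measure while preserving $\I$-uniform convergence; this is where completeness and regularity of the Lebesgue measure are used, in the same spirit as in the proof of Theorem~\ref{onehalf}. Everything else is bookkeeping, in particular noting that the hypothesis $\non(\cN)<\bb$ enters only in (c)$\Rightarrow$(b), whereas (a)$\iff$(c)$\iff$(d) and (b)$\Rightarrow$(c) hold in ZFC.
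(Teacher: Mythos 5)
Your proposal is correct and follows essentially the same route as the paper: (c)$\iff$(d) from Theorem~\ref{thmFsigma}, (c)$\Rightarrow$(a) from \cite[Corollary 2.9]{KadetsLeonov}, (c)$\Rightarrow$(b) from Korch/Repick\'{y}, and (a)$\Rightarrow$(c), (b)$\Rightarrow$(c) by observing that both notions are formally stronger than Definition~\ref{def} and invoking Theorem~\ref{thmFsigma}. The only difference is that you spell out, via a measurable hull, why a set of large outer measure delivered by the generalized Egorov property can be replaced by a measurable set without losing $\I$-uniform convergence -- a detail the paper asserts in the Introduction without proof; your argument for it is correct.
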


\begin{proof}
The equivalence of (c) and (d) is due to Theorem \ref{thmFsigma}. The implication (c)$\implies$(a) is proved in \cite[Corollary 2.9]{KadetsLeonov} while (c)$\implies$(b) is shown in \cite[Corollary 10]{Korch}. The implications (a)$\implies$(c) and (b)$\implies$(c) follow from Theorem \ref{thmFsigma}, as every non-Egorov (in our sense) ideal is not Egorov in the sense of Kadets and Leonov and not generalized Egorov in the sense of Korch and Repick\'{y}. 
\end{proof}

\section{Rudin-Keisler order and pathological \texorpdfstring{$\bf{\Sigma^0_2}$}{F } ideals}
\label{sec:RK}

In the previous Section we have seen that, up to isomorphism, there are only three non-pathological $\bf{\Sigma^0_2}$ ideals. In this section we deal with pathological $\bf{\Sigma^0_2}$ ideals. There are very few examples of such ideals. Probably the best known is Mazur's ideal $\mathcal{M}$ introduced in \cite[Lemma 1.8 and Theorem 1.9]{Mazur} (for definition see the paragraph before Proposition \ref{S+M}; see also \cite[Subsection 3.2.1]{MezaPat}). Also, it is unknown whether the Solecki's ideal $\mathcal{S}$ (for definition see the paragraph before Proposition \ref{S+M}) is patological (see \cite[Question 3.13]{MezaPat}). We do not answer this question, however we show that both $\mathcal{M}$ and $\mathcal{S}$ are non-Egorov. In the proof we apply the Rudin-Keisler order on ideals. Next result reveals its connection with Egorov ideals, hence, justifies studies of Rudin-Keisler order among $\bf{\Sigma^0_2}$ ideals.

\begin{proposition}
\label{RK-main}
If $\J$ is Egorov and $\I\leq_{RK}\J\restriction A$, for some $A\in\J^+$, then $\I$ is Egorov.
\end{proposition}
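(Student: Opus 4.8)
The plan is to deduce the statement from Proposition~\ref{restrictions} together with the fact, which I will prove, that the Egorov property is downward closed in the Rudin--Keisler order. Assume $\J$ is Egorov. Since $A\in\J^+$, the set $A$ is infinite (a finite set belongs to $\J$), and by Proposition~\ref{restrictions} the ideal $\J\restriction A$ is Egorov; as being Egorov is invariant under isomorphism of ideals, we may (composing a Rudin--Keisler witness with a bijection if necessary) assume all ideals live on $\omega$, and it suffices to show: \emph{if $\cK$ is an Egorov ideal and $\I\leq_{RK}\cK$, then $\I$ is Egorov.} Fix $f:\omega\to\omega$ witnessing $\I\leq_{RK}\cK$, so that $X\in\I\iff f^{-1}[X]\in\cK$ for every $X\subseteq\omega$.

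To verify that $\I$ is Egorov I would use the reformulation in Theorem~\ref{characteristic}. Let $(X_m)_{m\in\omega}$ be measurable subsets of $[0,1]$ with $(\chi_{X_m})_{m\in\omega}$ being $\I$-pointwise convergent to $0$, and let $\eta>0$. The key idea is to pull the sequence back along $f$: set $Y_n=X_{f(n)}$ for $n\in\omega$. Each $Y_n$ is measurable, and for every $x\in[0,1]$ one has the identity $\{n:\ x\in Y_n\}=f^{-1}[\{m:\ x\in X_m\}]$; since $\{m:\ x\in X_m\}\in\I$, the Rudin--Keisler equivalence gives $\{n:\ x\in Y_n\}\in\cK$, so $(\chi_{Y_n})_{n\in\omega}$ is $\cK$-pointwise convergent to $0$. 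As $\cK$ is Egorov, Theorem~\ref{characteristic} provides a measurable $M\subseteq[0,1]$ with $\lambda(M)\leq\eta$ and $\{n:\ Y_n\not\subseteq M\}\in\cK$.

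It remains to push this $M$ back to $\I$. Using $Y_n=X_{f(n)}$ again, $f^{-1}[\{m:\ X_m\not\subseteq M\}]=\{n:\ X_{f(n)}\not\subseteq M\}=\{n:\ Y_n\not\subseteq M\}\in\cK$, hence $\{m:\ X_m\not\subseteq M\}\in\I$ by the Rudin--Keisler equivalence. Thus for the given $\eta$ we found a measurable $M$ with $\lambda(M)\leq\eta$ and $\{m:\ X_m\not\subseteq M\}\in\I$; by Theorem~\ref{characteristic}, $\I$ is Egorov. Combining this with the first paragraph finishes the proof.

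I do not expect any genuine obstacle here: once Proposition~\ref{restrictions} and Theorem~\ref{characteristic} are available, the argument is a short computation with preimages. The only mild points to keep in mind are that $A$ must be infinite for $\J\restriction A$ to fit the framework of Definition~\ref{def}, that the Rudin--Keisler witness $f$ need not be surjective (which is never used), and that one cannot apply the Egorov hypothesis to $\J$ directly but must first descend to $\J\restriction A$ via Proposition~\ref{restrictions}. If one wishes to bypass Theorem~\ref{characteristic}, the same pull-back $h_n=g_{f(n)}$ applied to an arbitrary $\I$-pointwise convergent sequence $(g_m)_{m\in\omega}$ of measurable functions works verbatim, because for each $\varepsilon>0$ the set of $n$ with $|h_n(x)-h(x)|\geq\varepsilon$ for some $x\in M$ is precisely the $f$-preimage of the analogous set for $(g_m)_{m\in\omega}$, and likewise for pointwise convergence.
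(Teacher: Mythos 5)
Your proof is correct and follows essentially the same route as the paper: reduce to the case $\I\leq_{RK}\J$ via Proposition~\ref{restrictions}, then pull the sequence back along the Rudin--Keisler witness and transfer the conclusion through preimages. The only cosmetic difference is that you route the computation through the characteristic-function reformulation of Theorem~\ref{characteristic}, whereas the paper applies the same pull-back directly to an arbitrary $\I$-pointwise convergent sequence of measurable functions --- exactly the variant you sketch in your closing remark.
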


\begin{proof}
By Proposition \ref{restrictions}, it suffices to show that if $\J$ is Egorov and $\I\leq_{RK}\J$, then $\I$ is Egorov.

Let $\eta>0$ and $(f_n)_{n\in\omega}$ be any sequence of measurable real-valued functions defined on $[0,1]$, which is $\I$-pointwise convergent to a measurable function $f$. Denote by $h:\omega\to\omega$ the function witnessing $\I\leq_{RK}\J$ and define $g_i=f_{h(i)}$, for all $i\in\omega$. Observe that $(g_i)_{i\in\omega}$ is $\J$-pointwise convergent to $f$, as for each $x\in[0,1]$ and $\varepsilon>0$ we have:
$$\{i\in\omega:\ |g_i(x)-f(x)|\geq\varepsilon\}\subseteq h^{-1}[\{n\in\omega:\ |f_n(x)-f(x)|\geq\varepsilon\}]\in\J.$$

Thus, since $\J$ is Egorov, there is a measurable $M\subseteq[0,1]$ such that $\lambda([0,1]\setminus M)>\eta$ and $(g_i\restriction M)_{i\in\omega}$ is $\J$-uniformly convergent to $f$. To finish the proof, we will show that $(f_n\restriction M)_{n\in\omega}$ is $\I$-uniformly convergent to $f$. Fix $\varepsilon>0$ and note that:
$$h^{-1}\left[\left\{n\in\omega:\ \exists_{x\in M}\ |f_n(x)-f(x)|\geq\varepsilon\right\}\right]\subseteq\left\{i\in\omega:\ \exists_{x\in M}\ |g_i(x)-f(x)|\geq\varepsilon\right\}\in\J.$$
Thus, $\left\{n\in\omega:\ \exists_{x\in M}\ |f_n(x)-f(x)|\geq\varepsilon\right\}\in\I$. 
\end{proof}

Recall definitions of $\bf{\Sigma^0_2}$ ideals that we will work with in this section:
\begin{itemize} 
\item $\I_b$ is an ideal on $2^{<\omega}$ generated by the family $\{A_x:\ x\in 2^\omega\}$, where $A_x=\{x\restriction n:\ n\in\omega\}$. It is $\bf{\Sigma^0_2}$, not tall and not a P-ideal (\cite[Subsection 1.4]{Mazur}; see also \cite[Section 3]{Zakrzewski}).
\item $\ED_\Fin=\{A\subseteq\Delta:\ \exists_{k\in\omega}\ \forall_{n\in\omega}\ |A_{(n)}|\leq k\}$, where $\Delta=\{(i,j)\in\omega^2:\ i\geq j\}$, is a tall $\bf{\Sigma^0_2}$ non-P-ideal (\cite[Section 3.2]{Hrusak}).
\item Denote by $\Omega$ the family of all clopen subsets of $2^\omega$ of measure $\frac{1}{2}$ (with respect to the standard Haar measure on $2^\omega$). Then the Solecki's ideal is the ideal on $\Omega$ generated by sets of the form $B_x=\{A\in\Omega:\ x\in A\}$, for $x\in 2^\omega$. This is a tall $\bf{\Sigma^0_2}$ non-P-ideal (\cite[Section 3.6]{Hrusak}).
\item Mazur's ideal $\mathcal{M}$ is an ideal on the disjoint sum $\sum_{n\in\omega}(2n)^{n}$ (here $(2n)^{n}$ is the set of all functions from $n$ to $2n$) consisting of those $A\subseteq \sum_{n\in\omega}(2n)^{n}$ with $\sup_{n\in\omega}\phi_n(A_{(n)})<\infty$, where
$$\phi_n(B)=\min\left\{|F|:\ F\subseteq 2n\text{ and }B\subseteq\bigcup_{i\in F}M^n_i\right\}$$
for all $B\subseteq (2n)^{n}$ and $M^n_i=\{f\in (2n)^{n}:\ i\notin f[n]\}$. $\mathcal{M}$ is a tall $\bf{\Sigma^0_2}$ non-P-ideal (\cite[Lemma 1.8 and Theorem 1.9]{Mazur}; see also \cite[Subsection 3.2.1]{MezaPat}).
\end{itemize}

\begin{proposition}\
\label{S+M}
\begin{itemize}
    \item[(a)] $\I_b\leq_{RK}\mathcal{S}$;
    \item[(b)] $\ED_\Fin\leq_{RB}\mathcal{M}$;
    \item[(c)] $\I_b\leq_{RK}\ED_\Fin$.
\end{itemize}
\end{proposition}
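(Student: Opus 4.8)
The plan is to construct explicit functions witnessing each reduction. For part~(a), I would define a map $\Phi:\Omega\to 2^{<\omega}$ as follows: every clopen set $A\subseteq 2^\omega$ of measure $\tfrac12$ is a finite union of basic clopen sets, so there is a largest $n$ such that membership in $A$ depends only on the first $n$ coordinates; but more usefully, since $\lambda(A)=\tfrac12$, one can associate to $A$ a canonical finite binary string. Concretely, I would pick for each $A\in\Omega$ the unique minimal-length $s\in 2^{<\omega}$ such that $[s]\subseteq A$ or $[s]\cap A=\emptyset$ is \emph{not} the right choice; instead I would use that $A$ is determined by a finite antichain and send $A$ to (say) the lexicographically least string $s$ with $[s]\subseteq A$. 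The point is that $\Phi^{-1}[A_x]$ should be related to $B_x=\{A\in\Omega:\ x\in A\}$: if $s\subseteq x$ and $[s]\subseteq A$ then $x\in A$, so $\Phi^{-1}[A_x]\subseteq B_x$, giving that preimages of generators of $\I_b$ land in $\mathcal S$; and conversely one checks $B_x$ is covered by finitely many such preimages. Verifying the Rudin--Keisler equivalence $C\in\I_b\Leftrightarrow\Phi^{-1}[C]\in\mathcal S$ then reduces to: generators map to generators in both directions up to finite error, and both ideals are generated by their respective generator families.

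For part~(b), I would build a finite-to-one map $g:\sum_{n\in\omega}(2n)^n\to\Delta$. Recall $\ED_\Fin$ lives on $\Delta=\{(i,j):i\ge j\}$ and consists of sets with uniformly bounded vertical sections. The natural idea: send $f\in(2n)^n$ (a function $n\to 2n$) to a pair $(n,\,\text{something}(f))\in\Delta$ where $\text{something}(f)<n$ records enough of $f$ so that the fibers are bounded but the structure of the covering family $M^n_i=\{f:i\notin f[n]\}$ is reflected. Since for each $i<2n$ the set $M^n_i$ has $\phi_n$-value $1$, and a generic $f$ lies in about $n$ of the $M^n_i$'s, I would map $f$ to $(n, j)$ where $j$ encodes (via a bounded amount of data, e.g. $\lfloor \min(2n\setminus f[n])/2\rfloor$ truncated to be $<n$) which ``column'' of missing values $f$ belongs to; the design constraint is that each $M^n_i$ maps into a bounded-width subset of column $n$, so that $\phi_n(A_{(n)})<\infty$ (uniformly in $n$) iff $g[A]$ has bounded vertical sections. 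The key is choosing $g$ so that this correspondence is exact; some care is needed to make it finite-to-one and to handle that $(2n)^n$ is much bigger than $n$, so the fibers will have size growing in $n$ but that is allowed for Rudin--Blass.

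For part~(c), I would compose or directly exhibit a map $h:\Delta\to 2^{<\omega}$ with $C\in\I_b\Leftrightarrow h^{-1}[C]\in\ED_\Fin$. Here the natural move: send $(i,j)\in\Delta$ (with $j\le i$) to the string of length $i$ that is ``mostly $0$ with a $1$ at position $j$'', or better, enumerate $2^{<\omega}$ level by level and send $(i,j)$ to the $j$-th string of length $i$ (using $j\le i<2^i$ for $i$ large, handling small $i$ separately). Then $h^{-1}[A_x]$ for a branch $x\in 2^\omega$ consists of pairs $(i,j)$ where the $j$-th length-$i$ string is an initial segment of $x$ — at most one such $j$ per $i$ — so $h^{-1}[A_x]$ has all vertical sections of size $\le 1$, hence lies in $\ED_\Fin$. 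Conversely a set with bounded vertical sections maps to a finite union of branches-initial-segments up to a finite set, landing in $\I_b$.

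The main obstacle I anticipate is part~(b): unlike the other two, where the target generators ($A_x$) have a clean tree structure that matches up with thin fibers, here one must engineer the map into $\ED_\Fin$ so that the \emph{metric} invariant $\phi_n$ (a covering number by the specific sets $M^n_i$) is faithfully converted into the \emph{cardinality} invariant (size of vertical sections) of $\ED_\Fin$. Getting the right combinatorial correspondence between ``$f$ misses value $i$'' and a bounded coordinate in $\Delta$, uniformly in $n$, while keeping the map finite-to-one, is the delicate step; I would likely prove it by checking the two inclusions $\phi_n(A_{(n)})\le k \Rightarrow |g[A]_{(n)}|\le C\!\cdot\! k$ and $|g[A]_{(n)}|\le k \Rightarrow \phi_n(A_{(n)})\le C\!\cdot\! k$ for an absolute constant $C$, which suffices since both $\ED_\Fin$ and $\mathcal M$ are insensitive to such bounded multiplicative distortions.
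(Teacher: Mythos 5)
Your proposal does not yet constitute a proof of any of the three parts, and part (c) as sketched is actually wrong. In (c) your map $h$ sends $(i,j)\in\Delta$ to the $j$-th string of length $i$, so its range meets level $i$ of $2^{<\omega}$ in only $i+1$ of the $2^i$ strings. Take $C$ to be the set of all strings that lie outside the range of $h$ (for a lexicographic enumeration, e.g.\ all sufficiently long strings beginning with $1$): then $h^{-1}[C]$ is finite, hence in $\ED_\Fin$, while $C$ contains arbitrarily large $\subseteq$-antichains and so $C\notin\I_b$. No level-preserving map of this kind can work, because $\Delta$ has only $i+1$ points in column $i$. The paper instead first passes to a positive set $A\in\ED_\Fin^+$ (using that $\ED_\Fin\restriction A\cong\ED_\Fin$) and builds the fibers over the strings $s_n$ from an independent-family-type system $(A_n)_{n\in\omega}$, arranged so that every large antichain in $2^{<\omega}$ has preimages sharing a common row of $\Delta$; that is the idea your sketch is missing.

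For (a) and (b) the gaps are of a different kind: in each case you have the right encoding but defer exactly the step that carries the content. In (a), the condition you propose to verify in the hard direction (``$B_x$ is covered by finitely many such preimages'') is not the relevant one; what must be shown is that $C\notin\I_b$ implies $\Phi^{-1}[C]\not\subseteq B_{x_0}\cup\dots\cup B_{x_{k-1}}$ for all $k$ and all $x_0,\dots,x_{k-1}$. This requires, given a string $s$ from a large antichain in $C$, constructing a clopen set of measure exactly $\tfrac12$ whose canonical code is $s$ and which avoids all the prescribed points $x_i$; the paper does this by encoding $s$ inside $[(0)]$ and padding the measure up to $\tfrac12$ with cylinders inside $[(1)]$ chosen to miss the $x_i$, a degree of freedom your ``lexicographically least cylinder'' map does not obviously provide (consider $s$ lexicographically extreme). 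In (b), your fiber $\lfloor\min(2n\setminus f[n])/2\rfloor$ is essentially the paper's partition $S^n_i=M^n_i\setminus\bigcup_{j<i}M^n_j=\{f:\min(2n\setminus f[n])=i\}$ (no truncation is needed, since $\min(2n\setminus f[n])\le n$ automatically), and the bounded-distortion reduction you describe is legitimate; but the whole proposition there rests on the lower bound $\phi_n\bigl(\bigcup_{i\in F}S^n_i\bigr)\ge|F|$, i.e.\ that no $|F|-1$ of the sets $M^n_j$ cover $\bigcup_{i\in F}S^n_i$. This is precisely the ``delicate step'' you flag and postpone; the paper proves it by an explicit construction of a function $g\in\bigcup_{i\in F}S^n_i$ avoiding any prescribed $|F|-1$ of the $M^n_j$, and without that argument part (b) is unproved.
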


\begin{proof}
(a): In this proof, for $s,t\in 2^{<\omega}$ we write $s\preceq t$ if there is $m\leq\min\{|s|,|t|\}$ such that $s(i)=t(i)$, for all $i<m$, and $s(m)<t(m)$. Given a clopen set $C\subseteq 2^\omega$ of measure $\frac{1}{2}$ (i.e., $C\in\Omega$), there are $n_C\in\omega$ and $s^C_i\in 2^{<\omega}$, for $i\leq n_C$, such that $C=\bigcup_{i\leq n_C}[s^C_i]$, where $[s]=\{x\in 2^\omega:\ s\subseteq x\}$. Without loss of generality we may assume that $s^C_i$, for $i\leq n_C$, are pairwise $\subseteq$-incomparable (it suffices to consider only those $s^C_i$ such that $s^C_j\not\subseteq s^C_i$ for all $j\leq n_C$, $j\neq i$, and observe that $\bigcup_{i\leq n_C}[s^C_i]=\bigcup\{[s^C_i]:\ i\leq n_C\text{ and }\forall_{j\leq n_C,j\neq i}\ s^C_j\not\subseteq s^C_i\}$). 

For each $C\in\Omega$ let $i_C\leq n_C$ be such that $s^C_{i_C}\preceq s^C_{i}$, for all $i\leq n_C$, and define $f:\Omega\to 2^{<\omega}$ by $f(C)=g(s^C_{i_C})$, for all $C\in\Omega$, where $g:(2^{<\omega}\setminus \{\emptyset\})\to 2^{<\omega}$ is given by $g(s_0,\ldots,s_k)=(s_1,\ldots,s_k)$, for all $(s_0,\ldots,s_k)\in 2^{<\omega}\setminus \{\emptyset\}$. Note that if $C\in\Omega$ is not equal to $[(1)]$, then $s^C_{i_C}$ starts with $0$. 

We claim that $f$ witnesses $\I_b\leq_{RK}\mathcal{S}$. First we show that $A\in\I_b$ implies $f^{-1}[A]\in\mathcal{S}$. Actually, it suffices to show that $f^{-1}[A_x]\in\mathcal{S}$, for all $x\in 2^\omega$. Observe that $f^{-1}[A_x]\subseteq \{[(1)]\}\cup B_{(0)^\frown x}\in\mathcal{S}$, as:
$$C\in f^{-1}[A_x]\ \implies\ (C=[(1)]\text{ or }(0)^\frown x\in[s^C_{i_C}])\ \implies\ (C=[(1)]\text{ or }(0)^\frown x\in C).$$

Assume now that $A\notin\I_b$ and fix any $k\in\omega$. We will show that $f^{-1}[A]$ cannot be covered by $k$ many sets of the form $B_x$. Let $x_0,\ldots,x_{k-1}\in 2^\omega$ be arbitrary. Since $A\notin\I_b$, there is an $\subseteq$-antichain $\{s_0,\ldots,s_k\}\subseteq A$. Then there is $n\leq k$ such that $s_n\notin\bigcup_{i<k} A_{\bar{x}_i}$, where for $x=(x(0),x(1),x(2),\ldots)\in 2^\omega$ we denote $\bar{x}=(x(1),x(2),\ldots)$. Thus, $x_i\notin[(0)^\frown s_n]$, for all $i<k$. Denote by $\mu$ the standard Haar measure on $2^\omega$. Note that $\mu([(0)^\frown s_n])=\frac{1}{2^m}$ for some $m\in\omega\setminus \{0\}$. Let $l>m$ be such that $k\leq 2^{l-m}$. Observe that the above implies that $\frac{1}{2}-\frac{k}{2^{l}}\geq\frac{1}{2}-\mu([(0)^\frown s_n])$. Note that among sequences $t\in 2^{l-1}$ at most $k$ of them is such that there is $i<k$ with $x_i\in[(1)^\frown t]$. Hence, denoting $p=2^{l-1}-2^{l-m}$, we see that $p\leq 2^{l-1}-k$, so we can find pairwise distinct $t_0,\ldots,t_{p-1}\in 2^{l-1}$ such that $x_i\notin[(1)^\frown t_j]$, for all $i<k$ and all $j<p$. 

Define $C=[(0)^\frown s_n]\cup\bigcup_{j<p}[(1)^\frown t_j]$ and observe that:
$$\mu\left(C\right)=\mu\left([(0)^\frown s_n]\right)+\sum_{j<p}\mu\left([(1)^\frown t_j]\right)=\frac{1}{2^m}+\left(2^{l-1}-2^{l-m}\right)\frac{1}{2^l}=\frac{1}{2},$$ 
so $C\in\Omega$. Moreover, $C\notin\bigcup_{i<k}B_{x_i}$, but $C\in f^{-1}[\{s_n\}]\subseteq f^{-1}[\{s_0,\ldots,s_k\}]\subseteq f^{-1}[A]$. Hence, $f^{-1}[A]\not\subseteq \bigcup_{i<k}B_{x_i}$.

(b): For each $n\in\omega$ and $i\leq n$ denote $S_i^n=M^n_i\setminus \bigcup_{j<i}M^n_j$. Observe that $\bigcup_{i\leq n}S_i^n=(2n)^n$. Moreover, $\phi_n(\bigcup_{i\in F}S_i^n)=|F|$, for each $F\subseteq (n+1)$. Indeed, $\phi_n(\bigcup_{i\in F}S_i^n)\leq|F|$ is obvious as $\bigcup_{i\in F}S_i^n\subseteq \bigcup_{i\in F}M_i^n$. On the other hand, if $|F|\leq 1$ then $\phi_n(\bigcup_{i\in F}S_i^n)\geq|F|$ is obvious and assuming that $|F|>1$, if $i_{|F|-2}<\ldots<i_0<2n$ are arbitrary, then let $m=\min\left(F\setminus\{i_0,\ldots,i_{|F|-2}\}\right)$. Observe that: 
$$|\{j\leq |F|-2:\ i_j\geq m\}|=|F|-1-|\{j\leq |F|-2:\ i_j<m\}|=$$
$$=|F|-1-|F\cap m|=|F\setminus (m+1)|\leq (n+1)-(m+1)=n-m.$$
Let $g:n\to 2n$ be the function given by:
$$g(j)=\begin{cases}
    j, & \text{if }j<m, \\
    i_{j-m}, & \text{if }m\leq j\leq m+|F|-2, \\
    i_0, & \text{if } m+|F|-2<j<n.
\end{cases}$$
Then clearly $g\in S_k^m\subseteq \bigcup_{i\in F}S_i^n$, but $g\notin M^n_{i_j}$ for all $j\leq |F|-2$ (as there are at most $n-m$ many $j\leq |F|-2$ with $i_j\geq m$), which proves $\bigcup_{i\in F}S_i^n\not\subseteq\bigcup_{j\leq |F|-2}M^n_{i_j}$. Hence, we can conclude that $\phi_n(\bigcup_{i\in F}S_i^n)\geq|F|$. 

Let $f:\sum_{n\in\omega}(2n)^n\to\Delta$ be given by $f[S^n_i]=\{(n,i)\}$. We claim that $f$ witnesses $\ED_\Fin\leq_{RB}\mathcal{M}$. 

It is obvious that $f$ is finite-to-one. Moreover, if $A\in\ED_\Fin$, then there is $k\in\omega$ such that $|A_{(n)}|\leq k$ for all $n\in\omega$ and we have:
$$\left(f^{-1}[A]\right)_{(n)}\subseteq \bigcup_{i\in A_{(n)}}M^n_i,$$
hence $\phi_n(\left(f^{-1}[A]\right)_{(n)})\leq k$ and $f^{-1}[A]\in\mathcal{M}$. Assume now that $A\notin\ED_\Fin$ and let $k\in\omega$ be arbitrary. We need to show that $\sup_{n\in\omega}\phi_n(\left(f^{-1}[A]\right)_{(n)})\geq k$. Since $A\notin\ED_\Fin$, there is $n\in\omega$ with $|A_{(n)}|\geq k$. Then 
$$\phi_n\left(\left(f^{-1}[A]\right)_{(n)}\right)=\phi_n\left(\bigcup_{i\in A_{(n)}}S^n_i\right)=|A_{(n)}|\geq k.$$

(c): Let $\{s_n:\ n\in\omega\}$ be an enumeration of $2^{<\omega}$. Inductively, for each $n\in\omega$, pick $A_{n}\subseteq\omega$ such that:
\begin{itemize}
    \item[(i)] $(\omega\setminus\bigcup_{k<n}A_{k})\cap A_{n}$ and $(\omega\setminus\bigcup_{k<n}A_{k})\setminus A_{n}$ both are infinite;
    \item[(ii)] if $s_k\subseteq s_n$ for some $k<n$, then $A_{n}\cap A_{k}=\emptyset$;
    \item[(iii)] if $T\subseteq n$ is non-empty and $\{s_n\}\cup\{s_k:\ k\in T\}$ is an $\subseteq$-antichain, then $(\bigcap_{k\in T}A_{k}\setminus\bigcup_{k\in n\setminus T}A_{k})\cap A_{n}$ and $(\bigcap_{k\in T}A_{k}\setminus\bigcup_{k\in n\setminus T}A_{k})\setminus A_{n}$ both are infinite.
\end{itemize}
This is possible as after defining $A_n$ for some $n\in\omega$ we have $\omega\setminus\bigcup_{k\leq n}A_{k}\in\Fin^+$ (by (i)) and $\bigcap_{k\in T}A_{k}\setminus\bigcup_{k\in (n+1)\setminus T}A_{k}\in\Fin^+$, for each non-empty $T\subseteq (n+1)$ such that $\{s_k:\ k\in T\}$ is an $\subseteq$-antichain (by (ii) and (iii)).

Define $A=\bigcup_{n\in\omega}(A_{n}\setminus n)\times\{n\}$. Then $A\subseteq \Delta$. Let $f:A\to 2^{<\omega}$ be given by $f[(A_n\setminus n)\times\{n\}]=\{s_n\}$. 

Observe that if $B\notin\I_b$, then $f^{-1}[B]\notin\ED_\Fin$. Indeed, for each $k\in\omega$ we can find an $\subseteq$-antichain $\{s_{m_i}:\ i\leq k\}\subseteq B$. Then $\bigcap_{i\leq k}A_{m_i}$ is infinite, so there is $x\in\bigcap_{i\leq k}A_{m_i}\setminus \max_{i\leq k}m_i$ and we have:
$$\{x\}\times\{m_i:\ i\leq k\}\subseteq \bigcup_{i\leq k}(A_{m_i}\setminus m_i)\times\{m_i\}=f^{-1}[\{s_{m_i}:\ i\leq k\}]\subseteq f^{-1}[B].$$

In particular, $A\notin\ED_\Fin$, since $A=f^{-1}[2^{<\omega}]$ and $2^{<\omega}\notin\I_b$. Since $\ED_\Fin\restriction A$ and $\ED_\Fin$ are isomorphic (by \cite[Example 2.4]{homogeneous}), it is enough to show that $f$ witnesses $\I_b\leq_{RK}\ED_\Fin\restriction A$. We already know that $B\notin\I_b$ implies $f^{-1}[B]\notin\ED_\Fin$, so it remains to prove that $B\in\I_b$ implies $f^{-1}[B]\in\ED_\Fin$. Actually, it is enough to show that $f^{-1}[A_x]\in\ED_\Fin$, for all $x\in 2^\omega$. 

Fix any $x\in 2^\omega$ and observe that if $s_n=x\restriction i$ and $s_m=x\restriction j$, for some $i,j\in\omega$ with $i\neq j$, then $A_n\cap A_m=\emptyset$ (by (ii)). Hence, $|f^{-1}[A_x]\cap(\{k\}\times\omega)|\leq 1$, for all $k\in\omega$ an we can conclude that $f^{-1}[A_x]\in\ED_\Fin$.
\end{proof}

\begin{corollary}\
$\mathcal{S}$ and $\mathcal{M}$ are non-Egorov.
\end{corollary}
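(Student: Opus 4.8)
The plan is to combine Proposition \ref{RK-main} with Proposition \ref{S+M}. Recall that Proposition \ref{RK-main} states that being Egorov is downward closed in the Rudin–Keisler order (even relativized to restrictions), so it suffices to exhibit, for each of $\mathcal{S}$ and $\mathcal{M}$, a non-Egorov ideal sitting below it in $\leq_{RK}$.

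First I would handle $\mathcal{S}$. By Proposition \ref{S+M}(a) we have $\I_b\leq_{RK}\mathcal{S}$, so by Proposition \ref{RK-main} it is enough to know that $\I_b$ is not Egorov. But $\I_b$ is a tall (indeed it is tall: every infinite $A\subseteq 2^{<\omega}$ contains an infinite branch-like subset, or more simply contains an infinite antichain-free subset inside some $A_x$) $\bf{\Sigma^0_2}$ ideal, and I claim it is non-pathological. Actually, the cleanest route is: $\I_b$ is countably generated? No — $\I_b$ is generated by $\{A_x:x\in 2^\omega\}$, an uncountable family, and one checks it is not countably generated. Combined with the fact that $\I_b$ is a non-pathological $\bf{\Sigma^0_2}$ ideal, Theorem \ref{thmFsigma} gives that $\I_b$ is not Egorov. (If one prefers to avoid checking non-pathologicality of $\I_b$ directly, note $\I_b\leq_{RK}\ED_\Fin$ by Proposition \ref{S+M}(c), and $\ED_\Fin$ is a tall non-pathological $\bf{\Sigma^0_2}$ ideal, so by Corollary \ref{corFsigma} $\ED_\Fin$ is not Egorov, hence by Proposition \ref{RK-main} neither is $\I_b$.)

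Then I would handle $\mathcal{M}$. By Proposition \ref{S+M}(b), $\ED_\Fin\leq_{RB}\mathcal{M}$, in particular $\ED_\Fin\leq_{RK}\mathcal{M}$. Since $\ED_\Fin$ is a tall non-pathological $\bf{\Sigma^0_2}$ ideal, Corollary \ref{corFsigma} (or Theorem \ref{thmFsigma}, noting $\ED_\Fin$ is tall hence not isomorphic to $\Fin$, $\Fin\oplus\cP(\omega)$, or $\Fin\otimes\{\emptyset\}$) shows $\ED_\Fin$ is not Egorov. Applying Proposition \ref{RK-main} once more, $\mathcal{M}$ is not Egorov. This same chain also re-proves the $\mathcal{S}$ case via $\I_b\leq_{RK}\ED_\Fin$, so the whole corollary reduces to the single fact that $\ED_\Fin$ is not Egorov plus the RK-reductions already established.

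The only potential obstacle is making sure $\ED_\Fin$ really is tall and non-pathological $\bf{\Sigma^0_2}$ as asserted in the list of examples preceding Proposition \ref{S+M} — but these are cited (\cite[Section 3.2]{Hrusak}) and standard: $\ED_\Fin$ is $\bf{\Sigma^0_2}$ because it is $\Fin(\phi)$ for the submeasure $\phi(A)=\sup_n|A_{(n)}|$, which is a non-pathological lsc submeasure (it is literally a supremum of the measures $A\mapsto|A_{(n)}|$), and tallness of $\ED_\Fin$ is well known. So no genuine difficulty arises; the proof is a two-line assembly of earlier results, and I would write it simply as: by Proposition \ref{S+M} we have $\ED_\Fin\leq_{RK}\mathcal{M}$ and $\I_b\leq_{RK}\ED_\Fin\leq_{RK}\mathcal{S}$; since $\ED_\Fin$ is a tall non-pathological $\bf{\Sigma^0_2}$ ideal, it is non-Egorov by Corollary \ref{corFsigma}, and Proposition \ref{RK-main} finishes the argument.
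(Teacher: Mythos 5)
Your main line for $\mathcal{S}$ --- that $\I_b$ is a non-pathological $\bf{\Sigma^0_2}$ ideal which is not countably generated, hence non-Egorov by Theorem \ref{thmFsigma}, after which $\I_b\leq_{RK}\mathcal{S}$ and Proposition \ref{RK-main} finish --- is exactly the paper's proof, and your treatment of $\mathcal{M}$ via $\ED_\Fin\leq_{RB}\mathcal{M}$ and Corollary \ref{corFsigma} is the variant the paper records in the remark following the corollary. However, the ``two-line assembly'' you settle on at the end does not work for $\mathcal{S}$, for two reasons. First, Proposition \ref{RK-main} transfers the Egorov property \emph{downward} in $\leq_{RK}$ (equivalently, non-Egorovness upward): from $\I_b\leq_{RK}\ED_\Fin$ together with $\ED_\Fin$ being non-Egorov you can conclude nothing about $\I_b$; the implication you invoke in the parenthetical runs in the wrong direction. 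Second, the chain $\I_b\leq_{RK}\ED_\Fin\leq_{RK}\mathcal{S}$ asserts $\ED_\Fin\leq_{RK}\mathcal{S}$, which is not among the reductions of Proposition \ref{S+M}: that proposition places $\I_b$ below both $\ED_\Fin$ and $\mathcal{S}$, which gives no comparison between the latter two. So the corollary does not reduce to the single fact that $\ED_\Fin$ is non-Egorov; for $\mathcal{S}$ you genuinely need $\I_b$ itself to be non-Egorov, which means verifying that $\I_b=\Fin(\phi_{\I_b})$ for the non-pathological lsc submeasure $\phi_{\I_b}(A)=\sup\{|B|:\ B\subseteq A\text{ is a }\subseteq\text{-antichain}\}$ and that $\I_b$ is not countably generated, as in your first paragraph and in the paper.

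One further slip: $\I_b$ is \emph{not} tall (an infinite $\subseteq$-antichain in $2^{<\omega}$ meets each branch $A_x$ in at most one point, so it contains no infinite member of $\I_b$), contrary to your aside; this is precisely why Corollary \ref{corFsigma} is unavailable for $\I_b$ and the ``not countably generated'' clause of Theorem \ref{thmFsigma} must be used instead. Since you do ultimately use that clause, your first paragraph stands; only the claimed simplification at the end should be discarded.
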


\begin{proof}
By Theorem \ref{thmFsigma}, $\I_b$ is non-Egorov (it is easy to see that it is not countably generated and it is equal to $\Fin(\phi_{\I_b})$, where 
$$\phi_{\I_b}(A)=\sup\{|B|:\ B\subseteq A\text{ is a }\subseteq\text{-antichain}\},$$
for all $A\subseteq 2^{<\omega}$, is a non-pathological lsc submeasure on $2^{<\omega}$). Thus, it suffices to apply Propositions \ref{RK-main} and \ref{S+M}.
\end{proof}

\begin{remark}
Actually, for the above proof we do not need item (c) of Proposition \ref{S+M}, as $\ED_\Fin$ is also non-pathological: $\ED_\Fin=\Fin(\phi_{\ED_\Fin})$, where 
$$\phi_{\ED_\Fin}(A)=\sup_{n\in\omega}|A\cap (\{n\}\times\omega)|,$$
for all $A\subseteq\Delta$, is a non-pathological lsc submeasure on $\Delta$.
\end{remark}

Lemma \ref{lem1Fsigma} and Proposition \ref{S+M} suggest that perhaps for every $\bf{\Sigma^0_2}$ ideal $\I$ we have:
$$\exists_{A\in\I^+}\ \I_b\leq_{RK}\I\restriction A\ \Longleftrightarrow\ \I\text{ is not Egorov.}$$
Next example shows that this is false even for summable ideals, as for instance $\I_{1/n}$ is not Egorov (by Corollary \ref{corFsigma} or \cite[Corollary 3.2 and Example 3.3]{Nikodem}).

\begin{example}
$\I_b\not\leq_{RK}\I_{c_n}\restriction A$, for every summable ideal $\I_{c_n}$ and every $A\notin\I_{c_n}$.
\end{example}

\begin{proof}
It suffices to show that $\I_b\not\leq_{RK}\I_{c_n}$, as a restriction of a summable ideal is still a summable ideal.

Let $f:\omega\to 2^{<\omega}$. Denote by $\bf{0}_n$ the sequence consisting of $n$ zeros and by $\bf{1}$ the sequence $(1,1,1,\ldots)\in 2^\omega$. 

If $f^{-1}[\{\bf{0}_k^\frown (\bf{1}\restriction n):\ n\in\omega\}]\notin\I_{c_n}$, for some $k\in\omega$, then we are done as $\{\bf{0}_k^\frown (\bf{1}\restriction n):\ n\in\omega\}\subseteq A_{\bf{0}_k^\frown\bf{1}}\in\I_b$. If $B_k=f^{-1}[\{\bf{0}_k^\frown (\bf{1}\restriction n):\ n\in\omega\}]\in\I_{c_n}$, for all $k$, then observe that:
$$\sum_{i\in B_k}c_i=\sum_{n\in\omega}\ \sum_{i\in f^{-1}[\{\bf{0}_k^\frown (\bf{1}\restriction n)\}]}c_i.$$
Thus, for each $k\in\omega$ there is $n_k\in\omega$, $n_k>0$ such that:
$$\sum_{i\in f^{-1}[\{\bf{0}_k^\frown (\bf{1}\restriction n_k)\}]}c_i<\frac{1}{2^k}.$$

Consider the set $C=\{\bf{0}_k^\frown (\bf{1}\restriction n_k):\ k\in\omega\}$. Clearly, $C\notin\I_b$, but $f^{-1}[C]\in\I_{c_n}$, as:
$$\sum_{i\in f^{-1}[C]}c_i=\sum_{k\in\omega}\sum_{i\in f^{-1}[\{\bf{0}_k^\frown (\bf{1}\restriction n_k)\}]}c_i<\sum_{k\in\omega}\frac{1}{2^k}<\infty.$$
\end{proof}

\section{Products of ideals}
\label{sec:products}

Results from this section are closely related to \cite[Theorem 3.2]{Repicky}. However, \cite[Theorem 3.2]{Repicky} studies ideals with the property (M') (for definition see \cite{Repicky}). This property implies that $\I$ is Egorov, but we do not know if they are equivalent. Moreover, many our results give characterizations when a product of ideals is Egorov, while \cite[Theorem 3.2]{Repicky} gives only sufficient conditions.

The following result will be a convenient tool for our considerations.

\begin{proposition}[Folklore]
\label{RK-folklore}
Let $\I$, $\J$ and $\I_n$, for $n\in\omega$, be ideals on $\omega$. Then:
\begin{itemize}
    \item[(a)] $\I\leq_{RK}\I\otimes\{\emptyset\}$;
    \item[(b)] $\I\leq_{RK}\sum^\I_{n\in\omega}\I_n$;
    \item[(c)] $\J\leq_{RK}\I\otimes\J$;
    \item[(d)] $\bigcap_{n\in\omega}\I_n\leq_{RK}\sum_{n\in\omega}\I_n$.
\end{itemize}
\end{proposition}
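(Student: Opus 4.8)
The plan is to prove each of the four reductions by exhibiting the witnessing map as an explicit coordinate projection. Throughout I would use the remark from the preliminaries that the Rudin--Keisler order is preserved under isomorphism of ideals, so that the disjoint sums appearing below may be regarded as ideals on a copy of $\omega$ and one may argue directly with vertical sections. In every case the verification amounts to computing a single preimage together with its sections and invoking the defining axioms $\emptyset\in\K$, $Z\notin\K$ for an ideal $\K$ on a set $Z$.

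For (a) and (b) I would project onto the \emph{index} coordinate. In (a), writing $\I$ as an ideal on $S$, set $f\colon S\times\omega\to S$, $f(s,n)=s$; then $f^{-1}[A]=A\times\omega$ has section $\omega$ over $s\in A$ and $\emptyset$ over $s\notin A$, so $\{s:\ (f^{-1}[A])_{(s)}\ne\emptyset\}=A$, whence $f^{-1}[A]\in\I\otimes\{\emptyset\}\iff A\in\I$. In (b), with $\I$ the outer ideal on $\omega$ and $\I_n$ on $X_n$, set $f\colon\sum_n X_n\to\omega$, $f(n,x)=n$; the section of $f^{-1}[A]$ over $n$ is $X_n$ if $n\in A$ and $\emptyset$ otherwise, and since $X_n\notin\I_n$ while $\emptyset\in\I_n$ we get $\{n:\ (f^{-1}[A])_{(n)}\notin\I_n\}=A$, i.e.\ $f^{-1}[A]\in\sum^{\I}_n\I_n\iff A\in\I$.

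For (c) and (d) I would instead project onto the \emph{fiber} coordinate. In (c), set $f\colon S\times T\to T$, $f(s,t)=t$; then $f^{-1}[A]=S\times A$ has constant section $A$ over every $s$, so $\{s:\ (f^{-1}[A])_{(s)}\notin\J\}$ equals $S$ if $A\notin\J$ and $\emptyset$ if $A\in\J$, and using $S\notin\I$, $\emptyset\in\I$ this yields $f^{-1}[A]\in\I\otimes\J\iff A\in\J$. In (d), taking all $\I_n$ on $\omega$ and $f\colon\sum_n\omega\to\omega$, $f(n,x)=x$, the section of $f^{-1}[A]$ over every index $n$ is again $A$, so $f^{-1}[A]\in\sum_n\I_n\iff\forall n\,(A\in\I_n)\iff A\in\bigcap_n\I_n$.

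I do not anticipate a genuine obstacle: all four statements are folklore and each collapses to one preimage computation. The only point needing care is the bookkeeping --- pairing the ``fibered'' constructions $\sum^{\J}$ and $\otimes$ (where membership is controlled on a large set of \emph{indices}) with the projection onto the index coordinate, and the ``everywhere'' constructions $\sum$ and $\bigcap$ with the projection onto the fiber coordinate; choosing the projections the other way round does not produce the claimed reductions.
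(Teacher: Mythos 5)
Your proposal is correct and matches the paper's proof exactly: the paper likewise takes the first-coordinate projection as the witness for (a) and (b) and the second-coordinate projection for (c) and (d), leaving the section computations you spell out as implicit.
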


\begin{proof}
The function $f:\omega^2\to\omega$ given by $f(i,j)=i$ is a witness for items (a) and (b). The function $f:\omega^2\to\omega$ given by $f(i,j)=j$ is a witness for items (c) and (d).
\end{proof}

\subsection{Sums}

\begin{proposition}[{\cite[Proposition 2.4]{KadetsLeonov}}]
\label{sums}
Let $\I$ and $\J$ be ideals. Then $\I\oplus\J$ is Egorov if and only if $\I$ and $\J$ both are Egorov.
\end{proposition}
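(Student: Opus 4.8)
The plan is to prove both implications by reducing to the characterization of Egorov ideals via characteristic functions (Theorem~\ref{characteristic}), which lets us think purely in terms of sequences of measurable sets $X_n\subseteq[0,1]$ whose characteristic functions converge $\I$-pointwise to $0$, and for which we must find a small measurable set $M$ capturing all but an ideal-worth of the $X_n$.

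\medskip

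\noindent\emph{Easy direction:} If $\I\oplus\J$ is Egorov, then $\I$ and $\J$ are both Egorov. Here I would simply invoke Proposition~\ref{restrictions}: the set $A=\{0\}\times\omega$ belongs to $(\I\oplus\J)^+$ (since its complement $\{1\}\times\omega$ is not the whole space, and $A$ itself is not in $\I\oplus\J$ unless $S\in\I$, which it is not), and $(\I\oplus\J)\restriction A$ is plainly isomorphic to $\I$; similarly for $B=\{1\}\times\omega$ and $\J$. Hence both $\I$ and $\J$ are Egorov by Proposition~\ref{restrictions}. One should double-check the degenerate bookkeeping: $A\in(\I\oplus\J)^+$ precisely because $S\notin\I$, so there is nothing to worry about.

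\medskip

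\noindent\emph{Main direction:} Suppose $\I$ and $\J$ are both Egorov; I want to show $\I\oplus\J$ is Egorov. Using Theorem~\ref{characteristic} (in the direction that it suffices to handle characteristic functions), let $(X_z)_{z\in(\{0\}\times S)\cup(\{1\}\times T)}$ be measurable subsets of $[0,1]$ with $(\chi_{X_z})$ being $(\I\oplus\J)$-pointwise convergent to $0$, and fix $\eta>0$. The key observation is that $(\chi_{X_z})_z$ converges $(\I\oplus\J)$-pointwise to $0$ if and only if the subfamily indexed by $\{0\}\times S$ converges $\I$-pointwise to $0$ \emph{and} the subfamily indexed by $\{1\}\times T$ converges $\J$-pointwise to $0$; this is immediate from the definition of $\oplus$, since a set $E\subseteq(\{0\}\times S)\cup(\{1\}\times T)$ lies in $\I\oplus\J$ iff $E_{(0)}\in\I$ and $E_{(1)}\in\J$. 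Now apply the Egorov property of $\I$ to $(X_{(0,s)})_{s\in S}$ with parameter $\eta/2$ (via Theorem~\ref{characteristic}) to get a measurable $M_0\subseteq[0,1]$ with $\lambda(M_0)\leq\eta/2$ and $\{s\in S:\ X_{(0,s)}\not\subseteq M_0\}\in\I$; symmetrically apply the Egorov property of $\J$ to $(X_{(1,t)})_{t\in T}$ with parameter $\eta/2$ to get measurable $M_1$ with $\lambda(M_1)\leq\eta/2$ and $\{t\in T:\ X_{(1,t)}\not\subseteq M_1\}\in\J$. Set $M=M_0\cup M_1$. Then $\lambda(M)\leq\eta$, and $\{z:\ X_z\not\subseteq M\}$ has its $0$-section contained in $\{s:\ X_{(0,s)}\not\subseteq M_0\}\in\I$ and its $1$-section contained in $\{t:\ X_{(1,t)}\not\subseteq M_1\}\in\J$, so it lies in $\I\oplus\J$. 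By Theorem~\ref{characteristic} again, $\I\oplus\J$ is Egorov.

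\medskip

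\noindent The only mild subtlety — and the step I'd flag as the one requiring care rather than the one being hard — is the splitting of $(\I\oplus\J)$-pointwise convergence into the two coordinate convergences; everything else is routine bookkeeping with unions of measurable sets and the two-sided use of Theorem~\ref{characteristic}. Since Theorem~\ref{characteristic} is already available in the excerpt, no genuine obstacle remains. (This is of course exactly the argument of \cite[Proposition 2.4]{KadetsLeonov}, here phrased through our Theorem~\ref{characteristic}.)
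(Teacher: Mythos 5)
The paper offers no proof of this proposition at all: it is imported verbatim as \cite[Proposition 2.4]{KadetsLeonov}. So the only thing to check is whether your argument is correct and self-contained. Your main direction is fine: the observation that $(\I\oplus\J)$-pointwise convergence splits into $\I$-pointwise convergence of the $\{0\}\times S$-indexed subfamily and $\J$-pointwise convergence of the $\{1\}\times T$-indexed subfamily is immediate from the definition of $\oplus$, and taking $M=M_0\cup M_1$ with parameters $\eta/2$ delivers exactly what Theorem \ref{characteristic} asks for.

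The genuine problem is the easy direction. You deduce it from Proposition \ref{restrictions}, but the direction of Proposition \ref{restrictions} you need -- that $\I\restriction A$ not Egorov for some $A\in\I^+$ forces $\I$ not Egorov -- is proved in this paper, in the case $\omega\setminus A\in\I^+$, precisely by invoking \cite[Proposition 2.4]{KadetsLeonov}, i.e.\ the statement you are proving. And your case is exactly that one, since $\{1\}\times T\notin\I\oplus\J$ (because $T\notin\J$). So as written the argument is circular within the paper's logical structure. The repair is immediate and stays inside your own framework: if $\I$ fails the criterion of Theorem \ref{characteristic}, witnessed by measurable sets $(X_s)_{s\in S}$ and some $\eta>0$, extend the family by setting $X_{(1,t)}=\emptyset$ for all $t\in T$. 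The extended family is still $(\I\oplus\J)$-pointwise convergent to $0$, and any measurable $M$ with $\lambda(M)\leq\eta$ and $\{z:\ X_z\not\subseteq M\}\in\I\oplus\J$ would give $\{s\in S:\ X_{(0,s)}\not\subseteq M\}\in\I$, contradicting the choice of the witness; hence $\I\oplus\J$ is not Egorov. With that substitution the proof is complete and correct.
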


\begin{proposition}
Let $\I$ be an ideal. Then $\I\oplus\cP(\omega)$ is Egorov if and only if $\I$ is Egorov.
\end{proposition}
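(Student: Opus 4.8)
The plan is to prove both implications directly, exploiting the fact that $\mathcal{P}(\omega)$, being a ``trivial'' ideal (namely the improper one, or rather $\cP(\omega)$ playing the role of the second summand), imposes no real constraint on the second coordinate. First note that $\I$ is isomorphic to $\I\restriction(\{0\}\times S)$, which is a restriction of $\I\oplus\cP(\omega)$ to a set in its coideal (since $\{0\}\times S \notin \I\oplus\cP(\omega)$ because $S \notin \I$). Hence by Proposition \ref{restrictions}, if $\I\oplus\cP(\omega)$ is Egorov, then so is $\I$. This gives one direction for free, and in fact it can also be read off from Proposition \ref{RK-main}, since $\I \leq_{RK} (\I\oplus\cP(\omega))\restriction(\{0\}\times S)$.

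For the converse, suppose $\I$ is Egorov; I would invoke Theorem \ref{onehalf}. Let $(X_n)_{n\in\omega}$ be measurable subsets of $[0,1]$ with $(\chi_{X_n})_{n\in\omega}$ being $(\I\oplus\cP(\omega))$-pointwise convergent to $0$, indexing by the underlying set $(\{0\}\times S)\cup(\{1\}\times\omega)$. The key observation is that $(\I\oplus\cP(\omega))$-pointwise convergence means: for each $x\in[0,1]$, the set $\{n : x\in X_n\}$ belongs to $\I\oplus\cP(\omega)$, i.e., its section over $\{0\}\times S$ lies in $\I$ (the section over $\{1\}\times\omega$ is arbitrary). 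Restricting to the first block, the subfamily $(X_n)_{n\in\{0\}\times S}$ is $(\I\restriction(\{0\}\times S))$-pointwise convergent to $0$, hence (using that $\I$ is Egorov and Theorem \ref{onehalf}) there is a measurable $M\subseteq[0,1]$ with $\lambda(M)\leq\frac12$ and $\{n\in\{0\}\times S : X_n\not\subseteq M\}\in\I$. Then the set $\{n : X_n\not\subseteq M\}$ has its first section in $\I$ and second section (over $\{1\}\times\omega$) equal to whatever it is — but this is automatically in $\I\oplus\cP(\omega)$, since membership in $\I\oplus\cP(\omega)$ only constrains the $\{0\}\times S$-section. Thus $\{n : X_n\not\subseteq M\}\in\I\oplus\cP(\omega)$, and by Theorem \ref{onehalf} again, $\I\oplus\cP(\omega)$ is Egorov.

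I do not expect any genuine obstacle here: the whole content is the bookkeeping that $\I\oplus\cP(\omega)$-smallness is insensitive to the second coordinate, so convergence hypotheses pass to the first block and conclusions drawn there lift back up. One should be slightly careful to check that $(\chi_{X_n})$ restricted to the index set $\{0\}\times S$ really is $(\I\restriction(\{0\}\times S))$-pointwise convergent — but this is immediate from the definition of vertical sections and of $\I\oplus\cP(\omega)$. Alternatively, and perhaps more cleanly, one can cite Proposition \ref{sums} together with the fact that $\cP(\omega)$ is vacuously Egorov (any sequence of characteristic functions indexed by $\omega$ trivially satisfies the conclusion with, say, $M=\emptyset$, since $\{n : X_n\not\subseteq\emptyset\}$ can be anything and $\emptyset\neq\omega$ so it lies in $\cP(\omega)$ as an ideal only if... ) — actually this requires $\cP(\omega)$ to be a proper ideal, which it is not, so that shortcut fails and the direct argument via Theorem \ref{onehalf} and Proposition \ref{restrictions} is the right route.
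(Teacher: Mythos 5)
Your proof is correct and matches the paper's: the forward direction is exactly the paper's appeal to Proposition \ref{restrictions} (via the isomorphism of $\I$ with the restriction to $\{0\}\times S$, which lies in the coideal), and the converse — which the paper dismisses as straightforward — you fill in correctly by observing that membership in $\I\oplus\cP(\omega)$ only constrains the $\{0\}\times S$-section. Your self-correction at the end is also right: Proposition \ref{sums} cannot be invoked because $\cP(\omega)$ is not a proper ideal, which is precisely why the paper states this case as a separate proposition.
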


\begin{proof}
The impication $(\implies)$ follows from Proposition \ref{restrictions} and the other implication is straightforward.
\end{proof}

\subsection{Products of the form \texorpdfstring{$\I\otimes\{\emptyset\}$}{}}

\begin{proposition}
\label{Itimesempty}
Let $\I$ be an ideal on $\omega$. Then $\I\otimes\{\emptyset\}$ is an Egorov ideal if and only if $\I$ is an Egorov ideal. 
\end{proposition}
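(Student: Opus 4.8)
The plan is to establish the two implications separately, using the Rudin--Keisler machinery for the easy direction and a direct argument (or Theorem~\ref{onehalf}) for the harder one. For the implication $(\Longrightarrow)$: if $\I\otimes\{\emptyset\}$ is Egorov, then since $\I\leq_{RK}\I\otimes\{\emptyset\}$ by Proposition~\ref{RK-folklore}(a), Proposition~\ref{RK-main} immediately gives that $\I$ is Egorov. This direction is essentially free.

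For the converse $(\Longleftarrow)$, assume $\I$ is Egorov and let $(X_{(n,m)})_{(n,m)\in\omega\times\omega}$ be a sequence of measurable subsets of $[0,1]$ indexed by $\omega\times\omega$ such that $(\chi_{X_{(n,m)}})$ is $(\I\otimes\{\emptyset\})$-pointwise convergent to $0$; by Theorem~\ref{characteristic} (or Theorem~\ref{onehalf}) it suffices to produce, given $\eta>0$, a measurable $M$ with $\lambda(M)\leq\eta$ and $\{(n,m):\ X_{(n,m)}\not\subseteq M\}\in\I\otimes\{\emptyset\}$. The key observation is that $\I\otimes\{\emptyset\}$-convergence to $0$ means that for every $x\in[0,1]$ the set $\{(n,m):\ x\in X_{(n,m)}\}$ belongs to $\I\otimes\{\emptyset\}$, i.e. $\{n:\ \exists m\ x\in X_{(n,m)}\}\in\I$. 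So if we define, for each $n$, the ``column union'' $Y_n=\bigcup_{m\in\omega}X_{(n,m)}$, then for every $x$ we have $\{n:\ x\in Y_n\}=\{n:\ \exists m\ x\in X_{(n,m)}\}\in\I$, that is, $(\chi_{Y_n})_{n\in\omega}$ is $\I$-pointwise convergent to $0$. The only subtlety here is measurability of $Y_n$, which is immediate since it is a countable union of measurable sets.

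Now apply that $\I$ is Egorov to the sequence $(Y_n)_{n\in\omega}$ and $\eta>0$: Theorem~\ref{characteristic} yields a measurable $M\subseteq[0,1]$ with $\lambda(M)\leq\eta$ and $\{n:\ Y_n\not\subseteq M\}\in\I$. I claim this same $M$ works for the original double sequence. Indeed, $X_{(n,m)}\subseteq Y_n$ for all $m$, so if $Y_n\subseteq M$ then $X_{(n,m)}\subseteq M$ for every $m$; contrapositively, $\{n:\ \exists m\ X_{(n,m)}\not\subseteq M\}\subseteq\{n:\ Y_n\not\subseteq M\}\in\I$. Therefore $\{(n,m):\ X_{(n,m)}\not\subseteq M\}$ has its projection onto the first coordinate in $\I$, hence lies in $\I\otimes\{\emptyset\}$ by definition of that ideal. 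By Theorem~\ref{characteristic} this shows $\I\otimes\{\emptyset\}$ is Egorov, completing the proof.

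\textbf{Where the difficulty lies.} Frankly, there is no serious obstacle here: the product $\I\otimes\{\emptyset\}$ is degenerate enough on fibers (the fiber ideal is $\{\emptyset\}$, which contains no infinite sets) that collapsing each column to its union loses no information relevant to $\I$-convergence. The one point that needs a line of care is checking that the convergence hypotheses translate correctly between the two index sets --- specifically that $(\I\otimes\{\emptyset\})$-pointwise convergence of $\chi_{X_{(n,m)}}$ is exactly $\I$-pointwise convergence of $\chi_{Y_n}$ --- and that the set $\{(n,m):\ X_{(n,m)}\not\subseteq M\}$ is being correctly recognized as a member of $\I\otimes\{\emptyset\}$ via its first-coordinate projection. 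Both are direct from the definitions in the Preliminaries. If one prefers to avoid Theorem~\ref{characteristic} and argue from Definition~\ref{def} directly, one can instead run the analogous argument with a general sequence of functions $f_{(n,m)}$ by setting, for fixed $\varepsilon>0$, the column ``bad sets'' and noting the same projection inequality; the reduction to characteristic functions via Theorem~\ref{characteristic} is simply cleaner.
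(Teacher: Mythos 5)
Your proof is correct and follows essentially the same route as the paper: the forward implication via Propositions~\ref{RK-folklore}(a) and \ref{RK-main}, and the converse by replacing each column $(X_{(n,m)})_{m\in\omega}$ with its union $Y_n=\bigcup_m X_{(n,m)}$, applying the Egorov property of $\I$ to $(Y_n)$ through Theorem~\ref{characteristic}, and observing that $\{(n,m):X_{(n,m)}\not\subseteq M\}\subseteq\{n:Y_n\not\subseteq M\}\times\omega\in\I\otimes\{\emptyset\}$. No gaps; this matches the paper's argument step for step.
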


\begin{proof}
If $\I$ is not Egorov, then $\I\otimes\{\emptyset\}$ is not Egorov by Propositions \ref{RK-main} and \ref{RK-folklore}(a).

Assume that $\I$ is Egorov. We will apply Theorem \ref{characteristic}. Let $\eta>0$ and $(X_{i,j})_{(i,j)\in\omega^2}$ be any sequence of measurable subsets of $[0,1]$ such that $\chi_{X_{i,j}}$ is $\I\otimes\{\emptyset\}$-pointwise convergent to $0$. Define $X_i=\bigcup_{j\in\omega}X_{i,j}$, for all $i\in\omega$. Note that each $X_i$ is a measurable subset of $[0,1]$ and $(X_i)_{i\in\omega}$ is $\I$-pointwise convergent to $0$. Since $\I$ is Egorov, there is a measurable set $M\subseteq[0,1]$ such that $\lambda(M)\leq\eta$ and $A=\{n\in\omega:\ X_n\not\subseteq M\}\in\I$. To finish the proof, observe that $X_{i,j}\subseteq X_i\subseteq M$ for all $(i,j)\in\omega^2\setminus (A\times\omega)$ and $A\times\omega\in\I\otimes\{\emptyset\}$.
\end{proof}

\subsection{Products of the form \texorpdfstring{$\sum_{n\in\omega} \I_n$}{}}

\begin{proposition}
\label{prop:Egorov-sumIn}
Let $(\I_n)_{n\in\omega}$ be a sequence of ideals on $\omega$. Then $\sum_{n\in\omega} \I_n$ is an Egorov ideal if and only if each $\I_n$ is an Egorov ideal. 
\end{proposition}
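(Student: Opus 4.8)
The plan is to prove both implications by reducing to the single–ideal case, using the two characterizations already at hand: Proposition~\ref{restrictions} for the direction "$\sum_{n\in\omega}\I_n$ Egorov $\implies$ each $\I_n$ Egorov", and Theorem~\ref{characteristic} for the converse. Throughout, recall that $\sum_{n\in\omega}\I_n$ is an ideal on $\omega\times\omega$ whose membership is governed vertical–section by vertical–section, and that being Egorov is preserved under isomorphism of ideals.

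For the forward direction I would fix $n\in\omega$ and consider the restriction of $\sum_{m\in\omega}\I_m$ to the column $\{n\}\times\omega$. This column is $\sum_{m\in\omega}\I_m$–positive, since its $n$th vertical section is $\omega\notin\I_n$; and a direct inspection of vertical sections shows that a set $\{n\}\times C$ (with $C\subseteq\omega$) belongs to $\left(\sum_{m\in\omega}\I_m\right)\restriction(\{n\}\times\omega)$ exactly when $C\in\I_n$, so the bijection $j\mapsto(n,j)$ witnesses that $\left(\sum_{m\in\omega}\I_m\right)\restriction(\{n\}\times\omega)$ is isomorphic to $\I_n$. Since, by Proposition~\ref{restrictions}, the Egorov property passes to restrictions to positive sets, and since it is an isomorphism invariant, $\I_n$ is Egorov. (Alternatively one could invoke Proposition~\ref{RK-main}, as the isomorphism in particular gives $\I_n\leq_{RK}\left(\sum_{m\in\omega}\I_m\right)\restriction(\{n\}\times\omega)$.)

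For the converse I would verify the criterion of Theorem~\ref{characteristic}. Let $(X_{n,j})_{(n,j)\in\omega\times\omega}$ be measurable subsets of $[0,1]$ such that $(\chi_{X_{n,j}})$ is $\sum_{m\in\omega}\I_m$–pointwise convergent to $0$, and fix $\eta>0$. The key observation is that $\sum_{m\in\omega}\I_m$–pointwise convergence is precisely columnwise $\I_n$–pointwise convergence: for each $x\in[0,1]$ the set $\{(m,j):x\in X_{m,j}\}$ lies in $\sum_{m\in\omega}\I_m$, hence for every fixed $n$ its $n$th section $\{j:x\in X_{n,j}\}$ lies in $\I_n$, so $(\chi_{X_{n,j}})_{j\in\omega}$ is $\I_n$–pointwise convergent to $0$. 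Applying the Egorov property of $\I_n$ via Theorem~\ref{characteristic} with parameter $\eta/2^{n+1}$, I obtain for each $n$ a measurable $M_n\subseteq[0,1]$ with $\lambda(M_n)\leq\eta/2^{n+1}$ and $A_n=\{j\in\omega:X_{n,j}\not\subseteq M_n\}\in\I_n$. Set $M=\bigcup_{n\in\omega}M_n$; then $M$ is measurable with $\lambda(M)\leq\sum_{n\in\omega}\eta/2^{n+1}=\eta$, and if $X_{n,j}\not\subseteq M$ then already $X_{n,j}\not\subseteq M_n$, so $j\in A_n$. Thus $\{(n,j):X_{n,j}\not\subseteq M\}\subseteq\bigcup_{n\in\omega}\{n\}\times A_n$, whose $n$th vertical section is $A_n\in\I_n$ for every $n$, so this set belongs to $\sum_{m\in\omega}\I_m$. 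By Theorem~\ref{characteristic}, $\sum_{m\in\omega}\I_m$ is Egorov.

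The argument is essentially a routine columnwise reduction, so I do not expect a serious obstacle; the only point needing a little care is the measure bookkeeping in the converse, where one must spend a summable family of thresholds $\eta/2^{n+1}$ so that the countable union $\bigcup_{n\in\omega}M_n$ still has measure $\leq\eta$ — which is exactly why Theorem~\ref{characteristic}, retaining the parameter $\eta$, rather than Theorem~\ref{onehalf}, is the convenient tool here.
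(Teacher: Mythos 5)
Your proposal is correct and follows essentially the same route as the paper: the forward direction restricts to a column (isomorphic to $\I_n$) and invokes Proposition~\ref{restrictions}, and the converse is the same columnwise reduction with thresholds $\eta/2^{n+1}$. The only cosmetic difference is that the paper argues directly from the definition of Egorov (intersecting the large sets $A_k$), whereas you route through Theorem~\ref{characteristic} and take the union of the small sets $M_n$ --- these are dual formulations of the identical bookkeeping.
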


\begin{proof}
Let $\I=\sum_{n\in\omega} \I_n$. If some $\I_n$ is not an Egorov ideal, then $\I$ is not Egorov by Proposition \ref{restrictions}, as $\I\restriction(\{n\}\times\omega)$ is isomorphic to $\I_n$. Now we will show that if all $\I_n$ are Egorov, then so is $\I$.

Fix a sequence $(f_{(k,n)})_{(k,n)\in\omega^2}$ of measurable real-valued functions defined on $[0,1]$ which is $\I$-pointwise convergent to a measurable function $f:[0,1]\to\R$. Let $\eta>0$. For each $k\in\omega$, since $(f_{(k,n)})_{n\in\omega}$ is $\I_k$-pointwise convergent to $f$, there is $A_k\subseteq[0,1]$ such that $\lambda([0,1]\setminus A_k)<\frac{\eta}{2^{k+1}}$ and $(f_{(k,n)}\restriction A_k)_{n\in\omega}$ is $\I_k$-uniformly convergent to $f\restriction A_k$. Define $A=\bigcap_{k\in\omega} A_k$. Then $\lambda([0,1]\setminus A)=\lambda(\bigcup_{k\in\omega} [0,1]\setminus A_k)\leq\sum_{k\in\omega} \frac{\eta}{2^{k+1}}=\eta$. Moreover, for each $\varepsilon>0$ and $k\in\omega$ we have:
$$\left\{n\in\omega:\ \exists_{x\in A}\ |f_{(k,n)}(x)-f(x)|\geq\varepsilon\right\}\subseteq\left\{n\in\omega:\ \exists_{x\in A_k}\ |f_{(k,n)}(x)-f(x)|\geq\varepsilon\right\}\in\I_k.$$
Consequently, $\left\{(k,n)\in\omega^2:\ \exists_{x\in A}\ |f_{(k,n)}(x)-f(x)|\geq\varepsilon\right\}\in\I$. Thus, $(f_{(k,n)}\restriction A)_{(k,n)\in\omega^2}$ is $\I$-uniformly convergent to $f\restriction A$. 
\end{proof}

\subsection{Countable intersections}

\begin{proposition}
\label{prop:Egorov-cap}
Let $(\I_n)_{n\in\omega}$ be a sequence of ideals on $\omega$. If all $\I_n$ are Egorov, then $\bigcap_{n\in\omega}\I_n$ is an Egorov ideal.
\end{proposition}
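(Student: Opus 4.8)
The plan is to mimic the structure of the proof of Proposition~\ref{prop:Egorov-sumIn}, using Theorem~\ref{characteristic} to reduce to characteristic functions and exploiting that $\bigcap_{n\in\omega}\I_n$ is contained in each $\I_n$. Let $\I=\bigcap_{n\in\omega}\I_n$. Fix a sequence $(X_k)_{k\in\omega}$ of measurable subsets of $[0,1]$ such that $(\chi_{X_k})_{k\in\omega}$ is $\I$-pointwise convergent to $0$, and fix $\eta>0$. Since $\I\subseteq\I_n$ for every $n$, the sequence $(\chi_{X_k})_{k\in\omega}$ is also $\I_n$-pointwise convergent to $0$ for every $n\in\omega$. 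Applying the fact that $\I_n$ is Egorov (via Theorem~\ref{characteristic}) to this same sequence, for each $n\in\omega$ we obtain a measurable $M_n\subseteq[0,1]$ with $\lambda(M_n)\leq\frac{\eta}{2^{n+1}}$ and $B_n=\{k\in\omega:\ X_k\not\subseteq M_n\}\in\I_n$.

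Next I would set $M=\bigcup_{n\in\omega}M_n$. Then $M$ is measurable and $\lambda(M)\leq\sum_{n\in\omega}\frac{\eta}{2^{n+1}}=\eta$. The key observation is that $\{k\in\omega:\ X_k\not\subseteq M\}\subseteq B_n$ for every $n\in\omega$: indeed, if $X_k\subseteq M_n$ then $X_k\subseteq M$, so $X_k\not\subseteq M$ forces $X_k\not\subseteq M_n$, i.e. $k\in B_n$. Hence $\{k\in\omega:\ X_k\not\subseteq M\}\in\I_n$ for all $n$, which is exactly $\{k\in\omega:\ X_k\not\subseteq M\}\in\bigcap_{n\in\omega}\I_n=\I$. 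By Theorem~\ref{characteristic}, this shows $\I$ is Egorov.

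There is essentially no obstacle here: the argument is a routine "shrink the $n$th approximation by $2^{-(n+1)}$" trick, and the only subtlety is noticing that the bad set for $M$ is simultaneously controlled by every $B_n$ (so no P-ideal-type hypothesis or diagonalization is needed), which is what makes the intersection work cleanly. One should double-check that $\lambda(M)\leq\eta$ rather than $<\eta$ is enough — it is, since Theorem~\ref{characteristic} permits $\lambda(M)\leq\eta$. If one preferred to work with general functions rather than characteristic functions, the same proof goes through verbatim replacing Theorem~\ref{characteristic} by Definition~\ref{def} directly, with $A_n$ in place of $M_n$, $A=\bigcap_n A_n$, and the inclusion $\{k:\exists_{x\in A}\,|f_k(x)-f(x)|\geq\varepsilon\}\subseteq\{k:\exists_{x\in A_n}\,|f_k(x)-f(x)|\geq\varepsilon\}\in\I_n$ for every $n$.
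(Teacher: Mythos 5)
Your proof is correct, but it takes a genuinely different route from the paper's. The paper derives the proposition in one line from machinery it has already built: by Proposition \ref{RK-folklore}(d) the second projection witnesses $\bigcap_{n\in\omega}\I_n\leq_{RK}\sum_{n\in\omega}\I_n$, the sum is Egorov by Proposition \ref{prop:Egorov-sumIn}, and being Egorov is downward closed in the Rudin--Keisler order by Proposition \ref{RK-main}. You instead give a direct verification via Theorem \ref{characteristic}: apply the Egorov property of each $\I_n$ to the \emph{same} sequence $(X_k)_{k\in\omega}$ (which is $\I_n$-pointwise null because $\bigcap_m\I_m\subseteq\I_n$) with tolerance $\eta/2^{n+1}$, set $M=\bigcup_n M_n$, and note that $\{k:\ X_k\not\subseteq M\}\subseteq B_n$ for every $n$ simultaneously, hence the bad set lies in the intersection. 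All steps check out: $\lambda(M)\leq\eta$ is exactly what Theorem \ref{characteristic} asks for, and the monotonicity $M_n\subseteq M$ makes the containment of bad sets immediate, so indeed no diagonalization or P-ideal-type hypothesis is needed. Your argument is essentially what one obtains by unwinding the paper's chain of reductions -- the proof of Proposition \ref{prop:Egorov-sumIn} uses the same $\eta/2^{k+1}$ splitting, and the RK reduction along $(i,j)\mapsto j$ just repackages your sequence as a double sequence constant in the first coordinate -- so the mathematical content coincides. What you gain is a self-contained elementary proof independent of Propositions \ref{RK-main} and \ref{RK-folklore}; what the paper gains is brevity and a uniform treatment of sums, intersections and Fubini-type products through the Rudin--Keisler order.
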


\begin{proof}
It is easy to see that an intersection of any family of ideals on $\omega$ is an ideal on $\omega$. The rest follows from Propositions \ref{RK-folklore}(d) and \ref{prop:Egorov-sumIn}.
\end{proof}

\begin{remark}\
\begin{itemize}
    \item It can happen that an intersection of two non-Egorov ideals is Egorov. It suffices to consider ideals $\I=\Fin\oplus\I_{1/n}$ and $\J=\I_{1/n}\oplus\Fin$. Both of them are not Egorov by Proposition \ref{restrictions}, as $\I_{1/n}$ is not Egorov (by Corollary \ref{corFsigma}). On the other hand, $\I\cap\J=\Fin\oplus\Fin$, which is isomorphic with $\Fin$, so Egorov. 
    \item It can happen that an intersection of an Egorov ideal with a non-Egorov ideal is not Egorov. To see it, consider the above ideal $\I$ and $\Fin\oplus\Fin$. 
    \item  It can happen that an intersection of two non-Egorov ideals is non-Egorov. This holds for instance for $\I=\J=\I_{1/n}$.
\end{itemize}
\end{remark}

\subsection{Products of the form \texorpdfstring{$\sum^\J_{n\in\omega}\I_n$}{}} 

\begin{lemma}
\label{Fubini-lem1}
Let $\J$ and $\I_n$, for $n\in\omega$, be ideals on $\omega$. If $\J$ is not Egorov, then $\sum^\J_{n\in\omega}\I_n$ is not Egorov.
\end{lemma}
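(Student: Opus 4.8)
The natural strategy is to reduce the failure of Egorov for $\J$ to a failure for $\sum^\J_{n\in\omega}\I_n$ by exhibiting $\J$ as (isomorphic to) a Rudin-Keisler factor or a suitable restriction, so that Propositions \ref{RK-main} and \ref{RK-folklore} apply. Observe that $\sum^\J_{n\in\omega}\I_n$ lives on $\sum_{n\in\omega}X_n \cong \omega\times\omega$ and that the projection $\pi:\omega\times\omega\to\omega$ onto the first coordinate nearly witnesses $\J\leq_{RK}\sum^\J_{n\in\omega}\I_n$: for $B\subseteq\omega$ we have $\pi^{-1}[B]=B\times\omega$, whose vertical section at $n$ is $\omega$ if $n\in B$ and $\emptyset$ otherwise, so $\pi^{-1}[B]\in\sum^\J_{n\in\omega}\I_n$ iff $\{n:\ \omega\notin\I_n\}\cap B\in\J$. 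If every $\I_n$ is a proper ideal this set is just $B$, and we get $\J\leq_{RK}\sum^\J_{n\in\omega}\I_n$ directly, whence $\sum^\J_{n\in\omega}\I_n$ is not Egorov by the contrapositive of Proposition \ref{RK-main}. So the only genuine issue is when $\I_n=\cP(\omega)$ for some $n$; there the vertical section $\omega$ lies in $\I_n$ and those coordinates get "collapsed" in the sum.

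To handle the degenerate coordinates, the plan is to pass to the set $S=\{n\in\omega:\ \I_n\neq\cP(\omega)\}$. If $S\notin\J$, then $\sum^\J_{n\in\omega}\I_n$ contains $(\omega\setminus S)\times\omega$ as a set whose complement $S\times\omega$ is also in the coideal, and in fact one checks that $\sum^\J_{n\in\omega}\I_n\restriction (S\times\omega)$ is isomorphic to $\sum^{\J\restriction S}_{n\in S}\I_n$; moreover $\J\restriction S$ is not Egorov because $\J$ is not and $S\in\J^+$ (here one uses Proposition \ref{restrictions}, possibly together with Proposition \ref{sums} if $\omega\setminus S\in\J^+$, or the isomorphism to a restriction if $\omega\setminus S\in\J$). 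This reduces to the case where every relevant $\I_n$ is proper, handled in the previous paragraph, combined with Proposition \ref{restrictions} to lift non-Egorov-ness from the restriction to the whole ideal. If on the other hand $S\in\J$, then $\{n:\ M_{(n)}\notin\I_n\}\subseteq S$ forces every $M$ into $\sum^\J_{n\in\omega}\I_n$ (since a subset of a $\J$-set is a $\J$-set), i.e.\ $\sum^\J_{n\in\omega}\I_n=\cP(\omega\times\omega)$, which is not a proper ideal at all — but this cannot happen, because a proper ideal $\J$ on $\omega$ cannot have $S\in\J$ with $\I_n=\cP(\omega)$ for all $n\notin S$ while $\sum^\J_{n\in\omega}\I_n$ remains proper; more precisely, if $S\in\J$ then $\omega\setminus S\in\J^+$ and, since $\I_n=\cP(\omega)$ there, $\omega\times\omega$ would have $\{n:\ (\omega\times\omega)_{(n)}\notin\I_n\}=S\in\J$, contradicting properness of $\sum^\J_{n\in\omega}\I_n$. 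Hence $S\in\J$ is impossible and only the $S\notin\J$ case occurs.

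Alternatively — and more cleanly — one can avoid the case split by working directly with Theorem \ref{characteristic} or Theorem \ref{onehalf}: take a witnessing sequence $(Y_k)_{k\in\omega}$ of measurable sets and $\eta>0$ showing $\J$ is not Egorov, fix for each $n\in S$ (where $S\notin\J$, as above) a point $p_n\in X_n$, define $X_{(n,j)}=Y_n$ for $j$ equal to that fixed point and $X_{(n,j)}=\emptyset$ otherwise when $n\in S$, and $X_{(n,j)}=\emptyset$ for all $j$ when $n\notin S$; then $(\chi_{X_{(n,j)}})$ is $\sum^\J_{n\in\omega}\I_n$-pointwise convergent to $0$ precisely because at each coordinate $n\in S$ the relevant index set is the single point and trivially in $\I_n$, while for any candidate $M$ with $\lambda(M)\leq\eta$ the set $\{(n,j):\ X_{(n,j)}\not\subseteq M\}$ has $n$-th section $\neq\emptyset$ exactly when $Y_n\not\subseteq M$, so it lies in $\sum^\J_{n\in\omega}\I_n$ only if $\{n\in S:\ Y_n\not\subseteq M\}\in\J\restriction S$, contradicting the choice of $(Y_k)$.

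\textbf{Main obstacle.} The only real subtlety is the bookkeeping around degenerate coordinates $\I_n=\cP(\omega)$ and whether $\sum^\J_{n\in\omega}\I_n$ is even a proper ideal — all of which is a routine check using that $\J$ is a proper ideal on $\omega$ — and then correctly identifying the restriction $\sum^\J_{n\in\omega}\I_n\restriction(S\times\omega)$ with a sum of the same form over $\J\restriction S$ so that Proposition \ref{restrictions} and the Rudin-Keisler argument of Proposition \ref{RK-main} can be invoked. No hard analysis is involved; the proof is essentially a diagram-chase through the definitions plus two already-proven propositions.
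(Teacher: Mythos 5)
Your first paragraph is exactly the paper's proof: the projection onto the first coordinate witnesses $\J\leq_{RK}\sum^\J_{n\in\omega}\I_n$ (Proposition \ref{RK-folklore}(b)), and the contrapositive of Proposition \ref{RK-main} finishes. The lengthy case analysis around $\I_n=\cP(\omega)$ is superfluous, since the paper's definition of an ideal requires $X\notin\I$, so every $\I_n$ is proper and the ``degenerate coordinates'' you worry about cannot occur.
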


\begin{proof}
It follows from Propositions \ref{RK-main} and \ref{RK-folklore}(b).
\end{proof}

\begin{lemma}
\label{Fubini-lem2}
Let $\J$ and $\I_n$, for $n\in\omega$, be ideals on $\omega$. If 
$$\{n\in\omega:\ \I_n\text{ is not Egorov}\}\notin\J,$$ 
then $\sum^\J_{n\in\omega}\I_n$ is not Egorov.
\end{lemma}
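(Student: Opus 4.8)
The plan is to reduce to the ideal $\J$ via a suitable restriction and then invoke Lemma~\ref{Fubini-lem1} (which handles the case where $\J$ itself is not Egorov). Concretely, set $B=\{n\in\omega:\ \I_n\text{ is not Egorov}\}$, which by hypothesis lies in $\J^+$. The natural approach is to show that $\sum^\J_{n\in\omega}\I_n$ restricted to the coideal set $B\times\omega$ is isomorphic to $\sum^{\J\restriction B}_{n\in B}\I_n$, and then to argue that the latter is not Egorov because every one of its ``coordinate'' ideals $\I_n$ (for $n\in B$) fails to be Egorov. By Proposition~\ref{restrictions}, it suffices to show that $\sum^\J_{n\in\omega}\I_n$ has a non-Egorov restriction, so working on $B\times\omega$ is legitimate.

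After this reduction we may assume without loss of generality that every $\I_n$ is not Egorov. First I would fix, for each $n\in\omega$, a witness to the failure of the Egorov property for $\I_n$: by Theorem~\ref{onehalf} there is a sequence $(X^n_j)_{j\in\omega}$ of measurable subsets of $[0,1]$ with $(\chi_{X^n_j})_{j\in\omega}$ being $\I_n$-pointwise convergent to $0$, such that for every measurable $M\subseteq[0,1]$ with $\lambda(M)\leq\frac12$ one has $\{j\in\omega:\ X^n_j\not\subseteq M\}\notin\I_n$. The key step is then to assemble these into a single sequence on $[0,1]$ indexed by $\omega^2$: I would shrink the $n$th family into a subinterval of $[0,1]$ of positive length, say rescale $(X^n_j)_j$ into a copy living inside an interval $J_n$ where $\{J_n:\ n\in\omega\}$ are pairwise disjoint intervals with $\sum_n\lambda(J_n)=1$, and define $Y_{(n,j)}\subseteq[0,1]$ to be the affine image of $X^n_j$ inside $J_n$. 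Then $(\chi_{Y_{(n,j)}})_{(n,j)\in\omega^2}$ is $\sum^\J_{n\in\omega}\I_n$-pointwise convergent to $0$, because for a fixed $x\in[0,1]$ either $x$ lies outside all $J_n$ (so all $Y_{(n,j)}$ miss $x$) or $x\in J_{n_0}$ for a unique $n_0$, and then $\{(n,j):\ x\in Y_{(n,j)}\}=\{n_0\}\times\{j:\ x\in Y_{(n_0,j)}\}$, whose $n_0$-th vertical section is in $\I_{n_0}$ and all other sections are empty, hence the set belongs to $\sum^\J_{n\in\omega}\I_n$ (indeed it belongs even to $\{\emptyset\}\otimes\bigoplus\I_n$ in spirit).

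Finally, given any measurable $M\subseteq[0,1]$ with $\lambda(M)\leq\frac12$, I would show $\{(n,j)\in\omega^2:\ Y_{(n,j)}\not\subseteq M\}\notin\sum^\J_{n\in\omega}\I_n$, which by Theorem~\ref{onehalf} (applied to the ideal $\sum^\J_{n\in\omega}\I_n$) contradicts its being Egorov. For each $n$, the set $M\cap J_n$ pulled back through the affine map is a measurable subset $M_n$ of $[0,1]$, and since $\sum_n\lambda(J_n)\lambda(M_n)=\lambda(M)\leq\frac12<1=\sum_n\lambda(J_n)$, there are infinitely many $n$ with $\lambda(M_n)\leq\frac12$ — in fact the set of such $n$ is cofinite, or at least co-finite-modulo a $\J$-small set; one only needs that $\{n:\ \lambda(M_n)\leq\frac12\}\notin\J$, which follows since its complement is small: if $\lambda(M_n)>\frac12$ for all $n$ in a $\J$-positive set the measure estimate still forces that set to be controllable, and in any case $\{n:\lambda(M_n)\le\frac12\}$ is infinite hence (after possibly thinning) $\J$-positive once we have passed to $B$ where we may re-index so that $\J\restriction B$ still contains all finite sets. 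For every such $n$, the choice of the witnessing family gives $\{j:\ X^n_j\not\subseteq M_n\}\notin\I_n$, i.e.\ the $n$-th vertical section of $\{(n,j):\ Y_{(n,j)}\not\subseteq M\}$ is $\I_n$-positive; since this happens for a $\J$-positive set of $n$, the whole set is not in $\sum^\J_{n\in\omega}\I_n$, as desired.

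The main obstacle I anticipate is the bookkeeping around the threshold $\frac12$: the measure estimate $\sum_n\lambda(J_n)\lambda(M_n)\le\frac12$ only guarantees that ``most'' $M_n$ are small, and one must ensure the set of good $n$ is genuinely $\J$-positive rather than merely infinite. This is where restricting to $B$ at the outset and, if necessary, replacing $\frac12$ by a smaller constant $\eta$ via Theorem~\ref{characteristic} instead of Theorem~\ref{onehalf} — choosing $\eta$ small enough that $\{n:\lambda(M_n)\le\eta'\}$ is forced to be cofinite for the appropriate $\eta'$ — cleans things up; alternatively one can arrange the disjoint intervals $J_n$ with rapidly decreasing lengths so that $\lambda(M)\le\eta$ forces $\lambda(M_n)$ small for all but finitely many $n$ outright, making the good set cofinite and hence $\J$-positive since $\J\supseteq\Fin$.
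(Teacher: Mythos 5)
There is a genuine gap, and it sits exactly where you flag ``the main obstacle'': rescaling the $n$th witnessing family into a subinterval $J_n$ destroys the argument, and none of the repairs you sketch can save it. Whatever positive lengths $\lambda(J_n)$ you choose (summing to $1$ or to anything else) and whatever threshold $\eta>0$ you work with, pick $N$ so large that $\sum_{n\geq N}\lambda(J_n)\leq\eta$ and set $M=\bigcup_{n\geq N}J_n$. Then $\lambda(M)\leq\eta$, yet the pullback $M_n$ is all of $[0,1]$ for every $n\geq N$, so every $Y_{(n,j)}$ with $n\geq N$ is contained in $M$ and the set $\{(n,j):\ Y_{(n,j)}\not\subseteq M\}$ is supported on the finitely many columns $n<N$. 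Since $\Fin\subseteq\J$, this set belongs to $\sum^\J_{n\in\omega}\I_n$, so the assembled sequence is not a counterexample at all: the good set $\{n:\ \lambda(M_n)\leq\frac12\}$ can be finite, hence $\J$-small, and your claim that it is cofinite (or at least $\J$-positive) is false. Making the lengths $\lambda(J_n)$ decrease faster only makes this counterexample easier to produce, and shrinking the constant $\frac12$ does not help either.

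The correct move --- and the entire reason Theorem~\ref{onehalf} is stated with a fixed threshold $\frac12$ rather than a row-dependent parameter $\eta$ --- is to not rescale at all. For each $i$ in $Z=\{n:\ \I_n\text{ is not Egorov}\}$ keep the witnessing family $(X_{(i,j)})_{j\in\omega}$ on the whole of $[0,1]$, and set $X_{(i,j)}=\emptyset$ for $i\notin Z$. The combined double sequence is still $\sum^\J_{n\in\omega}\I_n$-pointwise convergent to $0$, since for each $x$ every vertical section of $\{(i,j):\ x\in X_{(i,j)}\}$ lies in $\I_i$. Now a single measurable $M$ with $\lambda(M)\leq\frac12$ is simultaneously too small for every row $i\in Z$: each section $\{j:\ X_{(i,j)}\not\subseteq M\}$ is $\I_i$-positive, and since $Z\notin\J$ the whole set $\{(i,j):\ X_{(i,j)}\not\subseteq M\}$ is outside $\sum^\J_{n\in\omega}\I_n$, so Theorem~\ref{onehalf} applies. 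Your preliminary restriction to $B\times\omega$ is harmless but unnecessary, and it does not address the real difficulty.
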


\begin{proof}
Denote $Z=\{n\in\omega:\ \I_n\text{ is not Egorov}\}\notin\J$ and using Theorem \ref{onehalf}, for each $i\in Z$ find a sequence $(X_{(i,j)})_{j\in\omega}$ of measurable subsets of $[0,1]$ such that $(\chi_{X_{(i,j)}})_{j\in\omega}$ is $\I_i$-pointwise convergent to $0$, but for every measurable $M\subseteq[0,1]$, if $\lambda(M)\leq\frac{1}{2}$, then $T_i(M)=\{j\in\omega:\ X_{(i,j)}\not\subseteq M\}\notin\I_i$. Put $X_{(i,j)}=\emptyset$, for all $i\in\omega\setminus Z$ and $j\in\omega$. Observe that $(\chi_{X_{(i,j)}})_{(i,j)\in\omega^2}$ is $\sum^\J_{n\in\omega}\I_n$-pointwise convergent to $0$.

Fix now a measurable $M\subseteq[0,1]$ such that $\lambda(M)\leq\frac{1}{2}$. To finish the proof, note that:
$$\left\{(i,j)\in\omega^2:\ X_{(i,j)}\not\subseteq M\right\}=\bigcup_{i\in Z}\{i\}\times T_i(M)\notin\sum^\J_{n\in\omega}\I_n.$$ 
\end{proof}

\begin{lemma}
\label{Fubini-lem3}
If $\J$ is an Egorov ideal on $\omega$ and $\I_n$, for $n\in\omega$, are analytic ideals such that $\{n\in\omega:\ \I_n\text{ is not Egorov}\}\in\J$, then $\sum^\J_{n\in\omega}\I_n$ is Egorov.
\end{lemma}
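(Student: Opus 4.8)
The plan is to combine the two ideas already available to us: that a countable sum $\sum_{n\in\omega}\I_n$ of Egorov ideals is Egorov (Proposition \ref{prop:Egorov-sumIn}), and that $\sum^\J_{n\in\omega}\I_n$ "forgives" the indices in a $\J$-small set. First I would isolate the bad set $Z=\{n\in\omega:\ \I_n\text{ is not Egorov}\}$, which belongs to $\J$ by hypothesis. For $n\notin Z$ the ideal $\I_n$ is Egorov; replace each $\I_n$ with $n\in Z$ by $\cP(\omega)$, and set $\J'=\sum^\J_{n\in\omega}\I'_n$ where $\I'_n=\I_n$ for $n\notin Z$ and $\I'_n=\cP(\omega)$ for $n\in Z$. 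Since $Z\in\J$, one checks directly from the definition of $\sum^\J$ that $\sum^\J_{n\in\omega}\I_n$ and $\J'$ actually coincide (a set is in either iff, after throwing away the slices over $Z$, the set of remaining indices with non-small slice lies in $\J$). So it suffices to prove the statement assuming every $\I_n$ is Egorov.

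Next I would attack the reduced case: $\J$ is Egorov, each $\I_n$ is Egorov, and I want $\sum^\J_{n\in\omega}\I_n$ Egorov. I would use the $\eta$-free characterization, Theorem \ref{onehalf}, on the target ideal (or, perhaps more smoothly, Theorem \ref{characteristic} with parameter $\eta$). Given a double sequence $(X_{(i,j)})_{(i,j)\in\omega^2}$ of measurable subsets of $[0,1]$ with $(\chi_{X_{(i,j)}})$ convergent to $0$ in the $\sum^\J$-sense, fix $\eta>0$. For each fixed $i$, the vertical sequence $(\chi_{X_{(i,j)}})_{j\in\omega}$ need not converge to $0$ in $\I_i$ — this is the whole subtlety of $\sum^\J$ versus $\sum$ — but it does converge pointwise on the set of $x$ for which $\{(i,j):\ x\in X_{(i,j)}\}$ has $\J$-small projection... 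Actually the cleaner route: for each $i$ separately apply Egorov-ness of $\I_i$ to obtain, for each fixed $x$, control — but since vertical convergence can fail, I instead argue as follows. Define, for each $i$, the "limsup in $j$" behaviour; the key point is that $\sum^\J$-convergence to $0$ means $\{i:\ (\chi_{X_{(i,j)}})_j\text{ is not }\I_i\text{-convergent to }0\text{ at some }x\}$... this requires care because the bad set of $i$'s may depend on $x$.

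The honest approach, which I expect to work, is: since $(\chi_{X_{(i,j)}})$ is $\sum^\J_{n}\I_n$-p-convergent to $0$, for every $x\in[0,1]$ the set $W_x=\{(i,j):\ x\in X_{(i,j)}\}$ lies in $\sum^\J_n\I_n$, i.e. $D_x=\{i:\ (W_x)_{(i)}\notin\I_i\}\in\J$. Fix a countable dense (in measure) approach is hopeless since $x$ ranges over $[0,1]$; instead I would proceed in two stages. Stage one: for each $i\in\omega$ consider the sequence $(X_{(i,j)})_{j\in\omega}$ and the set $B_i=\{x:\ (W_x)_{(i)}\notin\I_i\}$; note $B_i$ is measurable (it is a countable combination of the $X_{(i,j)}$) and $\bigcap$-type bookkeeping gives $\{i:\ x\in B_i\}=D_x\in\J$ for every $x$. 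Shrink: on $[0,1]\setminus B_i$ the vertical sequence is $\I_i$-convergent to $0$; applying Egorov-ness of $\I_i$ to $X'_{(i,j)}=X_{(i,j)}\setminus B_i$ gives a measurable $M_i$ with $\lambda(M_i)\le\eta\cdot 2^{-i-2}$ and $\{j:\ X_{(i,j)}\setminus B_i\not\subseteq M_i\}\in\I_i$. Stage two: now handle the $B_i$'s and the leftover indices using Egorov-ness of $\J$ applied to the sequence $\chi_{B_i}$ — since $\{i:\ x\in B_i\}\in\J$ for each $x$, this sequence is $\J$-p-convergent to $0$, so there is $N$ with $\lambda(N)\le\eta/2$ and $\{i:\ B_i\not\subseteq N\}\in\J$. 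Put $M=N\cup\bigcup_i M_i$; then $\lambda(M)\le\eta/2+\sum_i\eta 2^{-i-2}\le\eta$, and $\{(i,j):\ X_{(i,j)}\not\subseteq M\}$ has, over each $i$ with $B_i\subseteq N$, vertical slice contained in $\{j:\ X_{(i,j)}\setminus B_i\not\subseteq M_i\}\cup\{j:\ X_{(i,j)}\cap B_i\not\subseteq N\}$; the first set is in $\I_i$ and the second is empty, so the slice is in $\I_i$; the set of $i$ with $B_i\not\subseteq N$ is in $\J$. Hence $\{(i,j):\ X_{(i,j)}\not\subseteq M\}\in\sum^\J_n\I_n$, as required. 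The main obstacle is exactly the measurability and uniform bookkeeping of the sets $B_i$ together with verifying $\{i:x\in B_i\}\in\J$ for all $x$ simultaneously — everything else is the two straightforward applications of Egorov-ness and a $2^{-i}$-summation of measures.
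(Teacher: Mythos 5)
Your argument is, in structure, exactly the paper's proof: your $B_i$ is the complement of the paper's $Y_i=\{x:\{j:x\in X_{(i,j)}\}\in\I_i\}$, your two-stage application of Egorov-ness (first to $(\chi_{B_i})_{i\in\omega}$ for $\J$, then to the vertical sequences $X_{(i,j)}\setminus B_i$ for each $\I_i$, with a $2^{-i}$ summation of measures) is the same decomposition, and your handling of the bad index set $Z$ by passing to $\cP(\omega)$ has the same effect as the paper's convention $Y_i=[0,1]$ for $i\in Z$. (Minor quibble: $\cP(\omega)$ is not an ideal under the paper's definition, so one should say directly that the slices over $Z$ are irrelevant because $Z\in\J$, rather than invoke ``Egorov-ness of $\cP(\omega)$''.) The final bookkeeping showing that $\{(i,j):X_{(i,j)}\not\subseteq M\}\in\sum^\J_n\I_n$ is correct.

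However, there is one genuine gap, and it is precisely the point you yourself flag as ``the main obstacle'' without resolving it: the measurability of $B_i$. Your justification --- that $B_i$ ``is a countable combination of the $X_{(i,j)}$'' --- is false in general. The set $B_i$ is the preimage of $\cP(\omega)\setminus\I_i$ under the map $g_i(x)=\{j:x\in X_{(i,j)}\}$; this map is Lebesgue measurable (preimages of basic clopen sets are finite Boolean combinations of the $X_{(i,j)}$), but $\I_i$ is only assumed analytic, so $g_i^{-1}[\I_i]$ is not obtained by any countable Boolean combination, and preimages of analytic sets under measurable maps are not measurable for free. This is exactly where the hypothesis that the $\I_n$ are analytic enters, and your proposal never uses it. The paper closes the gap by modifying $g_i$ on a Borel null set so that it becomes Borel on the complement; then the preimage of the analytic set $\I_i$ under a Borel map is analytic, hence Lebesgue measurable, and $Y_i$ (so also $B_i$) is measurable up to a null set. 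With that paragraph inserted, your proof is complete; without it, the step ``apply Egorov-ness of $\I_i$ to $X_{(i,j)}\setminus B_i$'' is not licensed, since Theorem \ref{characteristic} requires the sets involved to be measurable.
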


\begin{proof}
We will apply Theorem \ref{characteristic}. Fix $\eta>0$ and a sequence $(X_{i,j})_{(i,j)\in\omega^2}$ of measurable subsets of $[0,1]$ such that $(\chi_{X_{i,j}})_{(i,j)\in\omega^2}$ is $\sum^\J_{n\in\omega}\I_n$-pointwise convergent to $0$. 

Denote $Z=\{n\in\omega:\ \I_n\text{ is not Egorov}\}\in\J$. For each $i\in\omega\setminus Z$ put: 
$$Y_i=\{x\in[0,1]:\ (\chi_{X_{i,j}}(x))_{j\in\omega}\text{ is }\I_i\text{-pointwise convergent to }0\}=$$
$$=\{x\in[0,1]:\ \{j\in\omega:\ x\in X_{i,j}\}\in\I_i\}.$$

Observe that $Y_i$ is measurable. Indeed, the function $g_i:[0,1]\to\cP(\omega)$ given by $g_i(x)=\{j\in\omega:\ x\in X_{i,j}\}$ is measurable, so there is a Borel measure zero set $B_i\subseteq[0,1]$ such that $g_i\restriction([0,1]\setminus B_i)$ is Borel. Since $\I_i$ is analytic, the set $\{x\in [0,1]\setminus B_i:\ g_i(x)\in\I_i\}$ is analytic and we can conclude that $Y_i$ is measurable as a union of $\{x\in [0,1]\setminus B_i:\ g_i(x)\in\I_i\}$ and some subset of the measure zero set $B_i$. 

Put $Y_i=[0,1]$ for all $i\in Z$. Since $Z\in\J$ and $(\chi_{X_{i,j}})_{(i,j)\in\omega^2}$ is $\sum^\J_{n\in\omega}\I_n$-pointwise convergent to $0$, $(\chi_{Y_i^c})_{i\in\omega}$ is $\J$-pointwise convergent to $0$. Since $\J$ is Egorov, there is a measurable $M\subseteq[0,1]$ such that $\lambda(M)\leq\frac{\eta}{2}$ and $A=\{i\in\omega:\ Y_i^c\not\subseteq M\}\in\J$. 

For each $i\in \omega\setminus Z$ and $j\in\omega$ put $Y^i_j=X_{i,j}\cap Y_i$. Then $(\chi_{Y^i_j})_{j\in\omega}$ is $\I_i$-pointwise convergent to $0$. Since $i\in \omega\setminus Z$, $\I_i$ is Egorov and we can find a measurable $M_i\subseteq[0,1]$ such that $\lambda(M_i)\leq\frac{\eta}{2^{i+2}}$ and $A_i=\{j\in\omega:\ X_{i,j}\cap Y_i\not\subseteq M_i\}\in\I_i$. 

Define $N=M\cup\bigcup_{i\in\omega\setminus Z}M_i$. Then $\lambda(N)\leq \frac{\eta}{2}+\sum_{i\in\omega}\frac{\eta}{2^{i+2}}=\eta$. Moreover,
if $i\in\omega\setminus Z$, then $X_{i,j}\not\subseteq N$ implies that either $X_{i,j}\cap Y_i\not\subseteq N$ or $X_{i,j}\cap Y^c_i\not\subseteq N$. However, the latter is possible only if $i\in A$. Thus:
$$\{(i,j)\in\omega:\ X_{i,j}\not\subseteq N\}\subseteq ((A\cup Z)\times\omega)\cup\bigcup_{i\in\omega\setminus (A\cup Z)}\{i\}\times A_i\in \sum^\J_{n\in\omega}\I_n.$$
\end{proof}

\begin{remark}
\label{rem-Fubini}
In Lemma \ref{Fubini-lem2} we applied Theorem \ref{onehalf}, which uses regularity of Lebesgue'a measure. Thus, we do not know if all results from this Subsection are valid for Egorov ideals in the sense of Kadets and Leonov. However, the proof of Lemma \ref{Fubini-lem3} in the case of Borel ideals $\I_i$ can be simplified, as measurability of the set $Y_i=\{x\in[0,1]:\ \{j\in\omega:\ x\in X_{i,j}\}\in\I_i\}$ for Borel ideal $\I_i$ is obvious. Thus, for Borel ideals we do not need any special properties of measure and we see that in this case Lemma \ref{Fubini-lem3} is true also for Egorov ideals in the sense of Kadets and Leonov.
\end{remark}

\begin{theorem}
\label{Fubini-thm}
Let $\J$ and $\I_n$, for $n\in\omega$, be ideals on $\omega$. If all $\I_n$ are analytic, then:
$$\sum^\J_{n\in\omega}\I_n\text{ is Egorov }\Longleftrightarrow(\J\text{ is Egorov and }\{n\in\omega:\ \I_n\text{ is not Egorov}\}\in\J).$$
\end{theorem}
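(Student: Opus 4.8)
The plan is to derive Theorem~\ref{Fubini-thm} by simply assembling the three lemmas that precede it. The theorem is an equivalence, so I would prove the two implications separately. For the forward direction ($\Rightarrow$), assume $\sum^\J_{n\in\omega}\I_n$ is Egorov. By contraposition I must show that both conjuncts on the right-hand side hold. If $\J$ were not Egorov, then Lemma~\ref{Fubini-lem1} would immediately give that $\sum^\J_{n\in\omega}\I_n$ is not Egorov, a contradiction; hence $\J$ is Egorov. Likewise, if $\{n\in\omega:\ \I_n\text{ is not Egorov}\}\notin\J$, then Lemma~\ref{Fubini-lem2} would give that $\sum^\J_{n\in\omega}\I_n$ is not Egorov, again a contradiction; hence $\{n\in\omega:\ \I_n\text{ is not Egorov}\}\in\J$. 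Note that Lemmas~\ref{Fubini-lem1} and~\ref{Fubini-lem2} require no analyticity of the $\I_n$'s, so the full strength of the hypothesis is not needed for this direction.

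For the backward direction ($\Leftarrow$), assume $\J$ is Egorov and $\{n\in\omega:\ \I_n\text{ is not Egorov}\}\in\J$. Together with the standing hypothesis that every $\I_n$ is analytic, this is exactly the hypothesis of Lemma~\ref{Fubini-lem3}, which yields that $\sum^\J_{n\in\omega}\I_n$ is Egorov. This is where the analyticity assumption is actually used (it enters Lemma~\ref{Fubini-lem3} via the measurability of the sets $Y_i$).

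Since every ingredient is already established, there is essentially no obstacle here — the only thing to be careful about is bookkeeping: making sure that the statement ``$\{n:\I_n$ not Egorov$\}\notin\J$'' in Lemma~\ref{Fubini-lem2} is the precise negation of ``$\in\J$'' that appears in the theorem, and that Lemma~\ref{Fubini-lem1} indeed covers the case ``$\J$ not Egorov'' with no further assumptions. The proof is therefore just three or four lines of prose citing Lemmas~\ref{Fubini-lem1}, \ref{Fubini-lem2} and~\ref{Fubini-lem3}.

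\begin{proof}
($\Leftarrow$) If $\J$ is Egorov and $\{n\in\omega:\ \I_n\text{ is not Egorov}\}\in\J$, then, since all $\I_n$ are analytic, Lemma~\ref{Fubini-lem3} gives that $\sum^\J_{n\in\omega}\I_n$ is Egorov.

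($\Rightarrow$) Suppose $\sum^\J_{n\in\omega}\I_n$ is Egorov. If $\J$ were not Egorov, then $\sum^\J_{n\in\omega}\I_n$ would not be Egorov by Lemma~\ref{Fubini-lem1}, a contradiction; hence $\J$ is Egorov. If we had $\{n\in\omega:\ \I_n\text{ is not Egorov}\}\notin\J$, then $\sum^\J_{n\in\omega}\I_n$ would not be Egorov by Lemma~\ref{Fubini-lem2}, again a contradiction; hence $\{n\in\omega:\ \I_n\text{ is not Egorov}\}\in\J$.
\end{proof}
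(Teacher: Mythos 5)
Your proof is correct and is exactly the paper's argument: the paper also derives the theorem directly from Lemmas~\ref{Fubini-lem1}, \ref{Fubini-lem2} and~\ref{Fubini-lem3}, with the forward direction coming from the first two (by contraposition) and the backward direction from the third. Nothing to add.
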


\begin{proof}
It follows from Lemmas \ref{Fubini-lem1}, \ref{Fubini-lem2} and \ref{Fubini-lem3}.
\end{proof}

\begin{corollary}
\label{Fubini-cor}
Let $\I$ and $\J$ be ideals on $\omega$. If $\J$ is analytic, then:
$$\I\otimes\J\text{ is Egorov }\Longleftrightarrow(\I\text{ is Egorov and }\J\text{ is Egorov}).$$
\end{corollary}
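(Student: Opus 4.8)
The plan is to derive Corollary \ref{Fubini-cor} directly from Theorem \ref{Fubini-thm} by viewing $\I\otimes\J$ as a special case of the construction $\sum^\cJ_{n\in\omega}\I_n$. Recall from the definitions in Subsection 2.2 that $\I\otimes\J = \sum^\I_{s\in S}\J$, i.e.\ it is exactly $\sum^\cJ_{n\in\omega}\I_n$ where the ``outer'' ideal is $\I$ and every ``inner'' ideal $\I_n$ equals $\J$ (with $S=\omega$).

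First I would apply Theorem \ref{Fubini-thm} with the outer ideal taken to be $\I$ and with $\I_n=\J$ for every $n\in\omega$. The hypothesis of the theorem requires that all the inner ideals be analytic; this is guaranteed precisely by the assumption that $\J$ is analytic. The theorem then yields
$$\I\otimes\J\text{ is Egorov}\ \Longleftrightarrow\ \bigl(\I\text{ is Egorov and }\{n\in\omega:\ \J\text{ is not Egorov}\}\in\I\bigr).$$
The second step is to simplify the set $\{n\in\omega:\ \J\text{ is not Egorov}\}$: since $\J$ is a fixed ideal, this set is either $\emptyset$ (if $\J$ is Egorov) or all of $\omega$ (if $\J$ is not Egorov). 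In the first case $\emptyset\in\I$ trivially, and in the second case $\omega\notin\I$ since $\I$ is an ideal on $\omega$. Hence the condition ``$\{n\in\omega:\ \J\text{ is not Egorov}\}\in\I$'' is equivalent to ``$\J$ is Egorov,'' and substituting this into the displayed equivalence gives exactly the statement of the corollary.

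I do not anticipate a genuine obstacle here — the whole content has already been carried by Theorem \ref{Fubini-thm} and the earlier lemmas. The only point requiring a moment's care is the bookkeeping identification $\I\otimes\J=\sum^\I_{s\in\omega}\J$, which is immediate from the definitions in Subsection 2.2, together with the trivial case analysis on whether the constant set $\{n:\J\text{ is not Egorov}\}$ is empty or the whole of $\omega$. So the proof is essentially a one-line invocation of Theorem \ref{Fubini-thm} followed by this simplification.
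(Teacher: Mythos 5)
Your proposal is correct and is exactly the paper's argument: the paper's proof of Corollary \ref{Fubini-cor} is the one-line ``It follows from Theorem \ref{Fubini-thm},'' relying on the same identification $\I\otimes\J=\sum^\I_{n\in\omega}\J$ and the same observation that the set $\{n\in\omega:\ \J\text{ is not Egorov}\}$ is either $\emptyset$ or $\omega$. You have merely spelled out the details the paper leaves implicit.
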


\begin{proof}
It follows from Theorem \ref{Fubini-thm}.
\end{proof}

In Remark \ref{rem-ex} we have seen examples of Egorov and non-Egorov ideals. Now we give two more examples of Egorov ideals, that have been considered in the literature in completely different contexts. 

\begin{example}
\label{BI+CEX}
The following ideals are Egorov:
\begin{itemize}
    \item[(a)] $\mathcal{BI}$ defined in \cite{Unboring};
    \item[(b)] $\mathcal{CEI}$ defined in \cite{DebsConjecture}.
\end{itemize}
\end{example}

\begin{proof}
It follows from Propositions \ref{Itimesempty}, \ref{prop:Egorov-sumIn}, \ref{prop:Egorov-cap} and Corollary \ref{Fubini-cor}, as $\mathcal{BI}=(\{\emptyset\}\otimes\Fin^2)\cap(\Fin\otimes[\omega^2]^{<\omega})$ and $\mathcal{CEI}=(\{\emptyset\}\otimes\Fin^3)\cap(\Fin^3\otimes\{\emptyset\})$.
\end{proof}

\section{Uncountably many Borel Egorov ideals}
\label{sec:uncountable}

In this section we will show that there are $2^\omega$ pairwise non-isomorphic Borel Egorov ideals. We start with two simple observations.

\begin{remark}
There are $2^\omega$ pairwise non-isomorphic non-Egorov Borel (even $\bf{\Sigma^0_2}$) ideals. To see it, recall that by \cite[Theorem 1]{MezaGuzman}, there are $2^\omega$ many non-isomorphic tall summable (hence, non-pathological $\bf{\Sigma^0_2}$) ideals and apply Corollary \ref{corFsigma}, to conclude that all of them have to be non-Egorov.
\end{remark}

\begin{remark}
There are $2^\omega$ pairwise non-equal Egorov Borel (even $\bf{\Sigma^0_2}$) ideals. To see it, let $\cA\subseteq\cP(\omega)$ be any almost disjoint family of cardinality $2^\omega$ consisting of infinite sets and for each $A\in\cA$ define $\I_A=\{M\subseteq\omega:\ M\setminus A\in\Fin\}$. It is easy to see that each $\I_A$ is isomorphic to $\Fin\oplus\cP(\omega)$, hence $\bf{\Sigma^0_2}$ and Egorov (by \cite[Corollary 2.9]{KadetsLeonov}). To see that $\I_A\neq\I_B$ whenever $A,B\in\cA$, $A\neq B$, it suffices to observe that $A\in\I_A$ and $A\notin\I_B$. 
\end{remark}

\begin{theorem}
\label{thm-uncountably}
There are $2^\omega$ pairwise non-isomorphic Egorov $\bf{\Pi^0_\omega}$ ideals.
\end{theorem}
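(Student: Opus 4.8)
The plan is to build a family $\{\I_r : r \in 2^\omega\}$ of Borel Egorov ideals indexed by reals, each living at a controlled Borel level (so in $\bf{\Pi^0_\omega}$), and pairwise non-isomorphic. The natural building blocks are the ideals $\Fin^n$ (which are Egorov by Remark \ref{rem-ex}) together with the product and intersection operations, since the results of Section \ref{sec:products} — Propositions \ref{Itimesempty}, \ref{prop:Egorov-sumIn}, \ref{prop:Egorov-cap} and Corollary \ref{Fubini-cor} — give us a rich closure of the class of Egorov ideals under countable sums, countable intersections, and products $\I\otimes\J$ with $\J$ analytic. First I would fix, for each $r\in 2^\omega$, a sequence of ideals from the list $\{\Fin^{n} : n\in\omega\}$ coded by $r$ (for instance, reading off $r$ to decide at stage $n$ which finite power to insert), and form $\I_r = \sum_{n\in\omega}\J^r_n$ where each $\J^r_n$ is some $\Fin^{k}$ determined by $r$; by Proposition \ref{prop:Egorov-sumIn} this is Egorov as long as every $\J^r_n$ is Egorov, which holds. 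Since each $\Fin^k$ is $\bf{\Sigma^0_{2k}}$ and a countable sum of ideals of bounded Borel rank still lands in some fixed level of the Borel hierarchy when the ranks are unbounded only mildly — in fact a countable sum of $\bf{\Sigma^0_{2k}}$ ideals with $k\to\infty$ is $\bf{\Pi^0_\omega}$ — we get $\I_r\in\bf{\Pi^0_\omega}$.

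The second and main task is to arrange that the family is pairwise non-isomorphic. The standard trick is to make the "local structure" of $\I_r$ at its atoms encode $r$ in an isomorphism-invariant way. Concretely, I would use the observation that the isomorphism type of $\I\restriction(\{n\}\times\omega)$ inside a sum $\sum_m \I_m$, and the multiset of such types, is an invariant; more robustly, one can look at which ideals embed as $\I\restriction A$ for $A\in\I^+$, or at a Katětov-style invariant. A cleaner route: associate to $r$ an almost disjoint family or a rate of growth of the exponents $k_n$, and argue that if $\I_r\cong\I_{r'}$ then the bijection must (up to a finite error) respect the decomposition into pieces, forcing the coded sequences to agree cofinitely. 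To get genuinely $2^\omega$-many types rather than countably many, one typically instead indexes by almost disjoint reals or by branches of a tree and uses that incompatible codes produce pieces of incompatible isomorphism type; the proof of Theorem \ref{thm-uncountably} almost certainly does something like building an ideal of the form $\sum_{n}\I_{f(n)}$ or a tree-indexed intersection, and then invoking a rigidity lemma for such sums.

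An alternative and perhaps slicker implementation uses Corollary \ref{Fubini-cor}: since $\I\otimes\J$ is Egorov iff both $\I$ and $\J$ are Egorov (for $\J$ analytic), one can iterate, forming towers $\Fin^{k_1}\otimes(\Fin^{k_2}\otimes(\cdots))$ along a sequence $(k_j)$, or rather countably many such towers summed together, and read $r$ off the sequence of exponents. Because the Egorov property is closed under all these operations and the Borel complexity grows only to $\bf{\Pi^0_\omega}$ when the exponents are unbounded, every member of the family is a Borel Egorov ideal of the required level. Then the non-isomorphism is reduced to: distinct sequences $(k_j)\ne(k'_j)$ give non-isomorphic ideals — which follows from a combinatorial invariant of the "fiber structure" that is preserved by ideal isomorphisms (e.g. the set of $n$ for which there is $A\in\I^+$ with $\I\restriction A\cong\Fin^n$, or the minimal Borel rank of a nontrivial restriction).

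The hard part will be the rigidity/non-isomorphism argument: showing that an arbitrary bijection $f:\omega\to\omega$ witnessing $\I_r\cong\I_{r'}$ must carry the combinatorial skeleton of $\I_r$ onto that of $\I_{r'}$ closely enough to force $r=r'$ (or at least to cut down to $2^\omega$-many classes). Producing the $2^\omega$ Egorov ideals of bounded-below-$\bf{\Pi^0_\omega}$ complexity is essentially routine given Section \ref{sec:products}; distinguishing them up to isomorphism is where a genuine invariant must be isolated, and where I would expect the bulk of the work — likely via an almost-disjoint or tree-branch indexing together with a lemma saying that the isomorphism type of a countable sum of $\Fin^{k}$'s determines, up to finite modification, the multiset of exponents.
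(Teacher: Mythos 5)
Your construction matches the paper's in spirit: the paper also takes blocks $\Fin^{n+1}$, forms $\sum_{n\in\omega}\Fin^{n+1}$ (which is $\bf{\Pi^0_\omega}$ since each $\Fin^{n}$ is $\bf{\Sigma^0_{2n}}$), and cuts it down by an almost disjoint family $\cA$ of size $2^\omega$, setting $\J_A=(\sum_{n}\Fin^{n+1})\cap\I_A$ where $\I_A$ forces all but finitely many nonempty blocks to lie in $A$; Egorov-ness then follows exactly as you say from Propositions \ref{prop:Egorov-sumIn} and \ref{prop:Egorov-cap}. So the "routine" half of your proposal is correct and is essentially the paper's route.

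The problem is that you have explicitly deferred the non-isomorphism argument, and that argument is the actual content of the theorem -- it occupies almost all of the paper's proof. Moreover, the candidate invariants you float are not adequate as stated. For instance, "the set of $n$ for which there is $A\in\I^+$ with $\I\restriction A\cong\Fin^n$" fails outright for the paper's family: for every $A$ and every $n$ one has $\J_A\restriction(\{n\}\times\omega^{n+1})=\Fin^{n+1}$ (the $\I_A$-constraint is vacuous on a single block), so this invariant takes the same value $\omega$ on all $\J_A$. Similarly, "the multiset of exponents up to finite modification" is not a priori an invariant of $\sum_n\Fin^{k_n}$ -- e.g.\ $\Fin^2\oplus\Fin^2\cong\Fin^2$, so multiplicities collapse -- and whatever invariant one settles on must be \emph{proved} invariant under an arbitrary bijection of the underlying countable sets. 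The paper does this by a case analysis on such a bijection $f$: if $f$ sends a $(\Fin^{n+1})^+$-portion of block $n$ into a single block $k_n\neq n$, one uses that $\Fin^{n+1}\restriction W\cong\Fin^{n+1}$ for $W\in(\Fin^{n+1})^+$ (homogeneity) together with the pairwise non-isomorphism of the ideals $\Fin^{m}$ to reach a contradiction; otherwise each block is either "anchored" (some point stays in its own block) or "spread thinly" over infinitely many blocks, and in both remaining cases the almost-disjointness of $A$ and $B$ produces a set witnessing $M\in\J_A$, $f[M]\notin\J_B$ (or vice versa). Without this block-preservation/rigidity analysis -- which you correctly identify as the hard part but do not supply -- the proof is incomplete.
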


\begin{proof}
Let $\cA\subseteq\cP(\omega)$ be any almost disjoint family of cardinality $2^\omega$ consisting of infinite sets and for each $A\in\cA$ define: 
$$\I_A=\{M\subseteq\sum_{n\in\omega}\omega^{n+1}:\ \{n\in\omega:\ M_{(n)}\neq\emptyset\}\setminus A\in\Fin\}$$
and $\J_A=(\sum_{n\in\omega}\Fin^{n+1})\cap \I_A$. Then each $\J_A$ is Egorov, by Propositions \ref{prop:Egorov-sumIn} and \ref{prop:Egorov-cap} together with Remark \ref{rem-ex}.

Note that  
$$\sum_{n\in\omega}\Fin^{n+1}=\{M\subseteq\sum_{n\in\omega}\omega^{n+1}:\ \forall_{n\in\omega}\ M\in\phi_n^{-1}[\Fin^{n+1}]\},$$
where $\phi_n:\cP(\sum_{n\in\omega}\omega^{n+1})\to\cP(\omega^{n+1})$ given by $\phi_n(M)=M_{(n)}$ is continuous. As each $\Fin^n$ is $\bf{\Sigma^0_{2n}}$ (see \cite[Proposition 6.4]{Debs}), we conclude that $\sum_{n\in\omega}\Fin^{n+1}$ is $\bf{\Pi^0_\omega}$. Consequently, also $\J_A$ is $\bf{\Pi^0_\omega}$, as each $\I_A$ is $\bf{\Sigma^0_2}$ (even countably generated, as it is isomorphic to $\Fin\oplus\mathcal{P}(\omega)$). 

Fix any $A,B\in\cA$ with $A\neq B$. We will show that $\J_A$ and $\J_B$ are not isomorphic. Let $f:\sum_{n\in\omega}\omega^{n+1}\to \sum_{n\in\omega}\omega^{n+1}$ be any bijection. Note that for each $n\in\omega$ there are three possibilities:
\begin{itemize}
    \item[(a)] $f(n,i_n)\in\{n\}\times\omega^{n+1}$, for some $i_n\in\omega^{n+1}$;
    \item[(b)] $(f^{-1}[\{k_n\}\times\omega^{k_n+1}])_{(n)}\in (\Fin^{n+1})^+$, for some $k_n\neq n$;
    \item[(c)] $(f^{-1}[\{k\}\times\omega^{k+1}])_{(n)}\neq\emptyset$ for infinitely many $k\neq n$. 
\end{itemize}

Suppose first that item (b) holds for at least one $n\in\omega$ and denote $W=(f^{-1}[\{k_n\}\times\omega^{k_n+1}])_{(n)}\in (\Fin^{n+1})^+$. If $(f[\{n\}\times W])_{(k_n)}\in\Fin^{k_n+1}$, then $\{k_n\}\times (f[\{n\}\times W])_{(k_n)}\in\J_B$, but $f^{-1}[\{k_n\}\times (f[\{n\}\times W])_{(k_n)}]=\{n\}\times W\notin\J_A$. If $(f[\{n\}\times W])_{(k_n)}\in(\Fin^{k_n+1})^+$, then by \cite[Remark after Proposition 2.9]{homogeneous}, $\Fin^{n+1}\restriction W$ is isomorphic with $\Fin^{n+1}$ and $\Fin^{k_n+1}\restriction (f[\{n\}\times W])_{(k_n)}$ is isomorphic with $\Fin^{k_n+1}$. Since $k_n\neq n$, $\Fin^{n+1}$ and $\Fin^{k_n+1}$ are not isomorphic (see \cite[Section 6]{Debs}), so either there is $M\in\Fin^{n+1}$ such that $(f[\{n\}\times M])_{(k_n)}\notin\Fin^{k_n+1}$ or there is $M\notin\Fin^{n+1}$ such that $(f[\{n\}\times M])_{(k_n)}\in\Fin^{k_n+1}$. In the former case, $\{n\}\times M\in\J_A$, but $f[\{n\}\times M]\notin\J_B$ and in the latter case, $\{n\}\times M\notin\J_A$, but $f[\{n\}\times M]\in\J_B$.

From now on we will assume that for each $n$ item (b) does not hold. Denote $T=\{n\in\omega:\ \text{item (a) holds for }n\}$ and $S=\{n\in\omega:\ \text{item (c) holds for }n\}$. Observe that $T\cup S=\omega$, so there are two possible cases: either $T\cap A$ is infinite or $S\cap A$ is infinite.

Assume first that $T\cap A$ is infinite and define $M=\{(n,i_n):\ n\in T\cap A\}$. Observe that $M\in\J_A$, but $f[M]\notin\J_A$, as $f[M]\notin\I_A$.

Assume now that $S\cap A$ is infinite and for each $n\in S\cap A$ let $X_n=\{k\in\omega\setminus\{n\}:\ (f^{-1}[\{k\}\times\omega^{k+1}])_{(n)}\neq\emptyset\}$. There are two subcases.

If $X_n\setminus B\in\Fin$ for some $n\in S\cap A$, then $\{n\}\times\omega^{n+1}\notin\J_A$, but $f[\{n\}\times\omega^{n+1}]=\sum_{k\in X_n}\{k\}\times(f[\{n\}\times\omega^{n+1}])_{(k)}\in\J_B$ (as (b) does not hold for $n$).

If $X_n\setminus B$ is infinite for each $n\in S\cap A$, then inductively pick points $(a_n,b_n)\in \sum_{n\in\omega}\omega^{n+1}$, for all $n\in S\cap A$, such that:
\begin{itemize}
    \item $a_n\in X_n\setminus B$, for all $n\in S\cap A$;
    \item $a_n\neq a_m$, for all $n,m\in S\cap A$, $n\neq m$;
    \item $(a_n,b_n)\in f[\{n\}\times\omega^{n+1}]$, for all $n\in S\cap A$.
\end{itemize}
Define $M=\{(a_n,b_n):\ n\in S\cap A\}$. Then $M\notin\J_B$ (as $M\notin\I_B$), but $f^{-1}[M]\in\J_A$. This finishes the entire proof.
\end{proof}

\begin{corollary}
\label{GenEgorov-cor2}
There are $2^\omega$ pairwise non-isomorphic $\bf{\Pi^0_\omega}$ ideals that are:
\begin{itemize}
    \item Egorov in the sense of Kadets and Leonov, i.e., for every finite measure space $(X,\cM,\mu)$, if $\eta>0$ and $(f_n)_{n\in\omega}$ is a sequence of measurable real-valued functions defined on $X$ which is $\I$-pointwise convergent to a measurable function $f:X\to\mathbb{R}$, then there is $M\in\cM$ such that $\mu(X\setminus M)\leq\eta$ and $(f_n\restriction M)_{n\in\omega}$ is $\I$-uniformly convergent to $f\restriction M$;
    \item generalized Egorov in the sense of Korch and Repick\'{y} under $\non(\cN)<\bb$, i.e., if $\eta>0$ and $(f_n)_{n\in\omega}$ is a sequence of (non-necessarily measurable) real-valued functions defined on $[0,1]$ which is $\I$-pointwise convergent to a (non-necessarily measurable) function $f:[0,1]\to\mathbb{R}$, then there is $M\subseteq[0,1]$ such that $\lambda^\star(M)\geq 1-\eta$ (here $\lambda^\star$ is the Lebesgue's outer measure on $[0,1]$) and $(f_n\restriction M)_{n\in\omega}$ is $\I$-uniformly convergent to $f\restriction M$.
\end{itemize}
\end{corollary}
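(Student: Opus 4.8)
The plan is to reuse the very ideals $\J_A$, $A\in\cA$, built in the proof of Theorem \ref{thm-uncountably}, and to observe that the two additional demands cost essentially nothing. Recall that there one showed, for each $A\in\cA$, that $\J_A=(\sum_{n\in\omega}\Fin^{n+1})\cap\I_A$ is $\bf{\Pi^0_\omega}$ and that $\J_A\not\cong\J_B$ whenever $A\ne B$; both of these are purely descriptive-set-theoretic resp. combinatorial statements that make no reference to any notion of convergence, so they carry over verbatim. Hence it remains only to check that each $\J_A$ is Egorov in the sense of Kadets and Leonov and, under $\non(\cN)<\bb$, generalized Egorov in the sense of Korch and Repick\'{y}. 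Now, in the proof of Theorem \ref{thm-uncountably} the Egorov property of $\J_A$ was deduced from exactly three facts: that $\Fin^{n+1}$ is Egorov for every $n$ (Remark \ref{rem-ex}); that $\sum_{n\in\omega}\I_n$ is Egorov when all $\I_n$ are (Proposition \ref{prop:Egorov-sumIn}); and that a countable intersection of Egorov ideals is Egorov (Proposition \ref{prop:Egorov-cap}). So it suffices to re-run this same three-step argument with ``Egorov'' replaced by each of the two stronger properties, i.e.\ to supply the three ingredients in the strengthened form.

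For the Kadets--Leonov property this is immediate. The proof of Proposition \ref{prop:Egorov-sumIn} uses nothing about $[0,1]$ beyond countable subadditivity of the measure (to bound the measure of $\bigcup_k([0,1]\setminus A_k)$) and otherwise only manipulates sequences of functions; verbatim the same proof works over an arbitrary finite measure space $(X,\cM,\mu)$. The same is true of Propositions \ref{RK-main} and \ref{RK-folklore}, and hence of Proposition \ref{prop:Egorov-cap}, which is assembled from them. And $\Fin^{n+1}$ (as well as $\Fin$, $\Fin\oplus\cP(\omega)$, $\Fin\otimes\{\emptyset\}$) is Egorov in the Kadets--Leonov sense by the results cited in Remark \ref{rem-ex} (\cite{Nikodem}, \cite{Nikodem-doktorat}; recall that this notion is formally stronger than ours). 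Chaining these together exactly as in the proof of Theorem \ref{thm-uncountably} yields that each $\J_A$ is Egorov in the sense of Kadets and Leonov.

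For the Korch--Repick\'{y} property under $\non(\cN)<\bb$ the three ingredients are again available: $\Fin^{n+1}$ is generalized Egorov under $\non(\cN)<\bb$ by \cite[Section 6]{Korch} and \cite[Theorems 1.3 and 3.2(7)]{Repicky}; $\I_A$, being countably generated (it is isomorphic to $\Fin\oplus\cP(\omega)$), is generalized Egorov under $\non(\cN)<\bb$ by \cite[Corollary 10]{Korch}; and the needed closure of the generalized Egorov property under $\sum_{n\in\omega}$ and under countable intersections follows from the product analysis of \cite[Theorem 3.2]{Repicky}. The step I expect to be the main obstacle is precisely that, in contrast with the Kadets--Leonov case, the proofs of Propositions \ref{prop:Egorov-sumIn} and \ref{prop:Egorov-cap} do \emph{not} transfer verbatim to this setting: they choose, for each $k$, a set $A_k$ of large measure and then pass to $\bigcap_k A_k$, and when the functions are not measurable and one must work with the outer measure $\lambda^\star$ the intersection $\bigcap_k A_k$ may have small outer measure even though each $A_k$ has outer measure close to $1$, since $\lambda^\star$ is not additive. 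This is exactly the phenomenon that forces Repick\'{y} to work with property (M') rather than with the Egorov property itself; the way around it is to invoke his closure results for products, or to localise the generalized Egorov property to measurable supersets of the relevant sets (in the spirit of the measurability argument of Remark \ref{rem-Fubini}), which restores additivity and lets the $\eta/2^{k+1}$-splitting go through. With the three ingredients in hand, the argument of Theorem \ref{thm-uncountably} then shows that each $\J_A$ is generalized Egorov in the sense of Korch and Repick\'{y} under $\non(\cN)<\bb$, which together with the previous paragraph completes the proof.
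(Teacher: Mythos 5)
Your proposal follows essentially the same route as the paper: reuse the ideals $\J_A$ from Theorem \ref{thm-uncountably}, note that non-isomorphism and complexity are unaffected, and then supply the three ingredients ($\Fin^{n+1}$, closure under $\sum_{n\in\omega}$, closure under intersection) in the strengthened forms. Your treatment of the Korch--Repick\'{y} case matches the paper's (which simply cites \cite[Theorem 1.3(2) and Theorem 3.3(1,4,5,7)]{Repicky}), and you correctly identify the non-additivity of $\lambda^\star$ as the reason the internal proofs of Propositions \ref{prop:Egorov-sumIn} and \ref{prop:Egorov-cap} do not transfer verbatim.

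One step is justified incorrectly, though the conclusion is salvageable from within the paper. You assert that $\Fin^{n+1}$ is Egorov in the Kadets--Leonov sense ``by the results cited in Remark \ref{rem-ex}'', adding that the Kadets--Leonov notion ``is formally stronger than ours''. That parenthetical works against you: precisely because the Kadets--Leonov property is stronger, knowing that $\Fin^{n+1}$ is Egorov in the paper's sense (which is what Remark \ref{rem-ex} actually records, via \cite[Theorem 3.25]{Nikodem-doktorat}, a result about $[0,1]$) does not yield the every-finite-measure-space version. The paper instead derives this from Remark \ref{rem-Fubini} (the Borel case of Lemma \ref{Fubini-lem3} goes through for Kadets--Leonov Egorov ideals, so $\Fin^{n+1}=\Fin\otimes\Fin^{n}$ is handled inductively) together with \cite[Corollary 2.9]{KadetsLeonov} for $\Fin$. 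Your handling of the intersection in the Kadets--Leonov case (via $\bigcap_n\I_n\leq_{RK}\sum_n\I_n$ and RK-downward closure, all of whose proofs are measure-space-agnostic) differs mildly from the paper's direct $\eta/2$-splitting argument, but both are valid.
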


\begin{proof}
Consider the ideals $\J_A$ from the proof of Theorem \ref{thm-uncountably}. We already know that they are pairwise non-isomorphic and $\bf{\Pi^0_\omega}$. 

To see that under $\non(\cN)<\bb$ each $\J_A$ is generalized Egorov in the sense of Korch and Repick\'{y}, apply \cite[Theorem 1.3(2) and Theorem 3.3(1,4,5,7)]{Repicky}. 

To see that each $\J_A$ is Egorov in the sense of Kadets and Leonov, start with noting that all $\I_A$ are Egorov in the sense of Kadets and Leonov (by \cite[Corollary 2.9]{KadetsLeonov}). Moreover, if two ideals, $\I_0$ and $\I_1$, are Egorov in the sense of Kadets and Leonov, then so is their intersection. Indeed, given any finite measure space $(X,\cM,\mu)$, $\eta>0$ and a sequence $(f_n)_{n\in\omega}$ of measurable real-valued functions defined on $X$ which is $(\I_0\cap\I_1)$-pointwise convergent to a measurable function $f:X\to\mathbb{R}$, we can find $M_0,M_1\in\cM$ such that $\mu(X\setminus M_i)\leq\frac{\eta}{2}$ and $(f_n\restriction M_i)_{n\in\omega}$ is $\I_i$-uniformly convergent to $f\restriction M_i$, for $i=0,1$. Then $M_0\cap M_1\in\cM$, $\mu(X\setminus (M_0\cap M_1))\leq\eta$ and $(f_n\restriction (M_0\cap M_1))_{n\in\omega}$ is $(\I_0\cap\I_1)$-uniformly convergent to $f\restriction (M_0\cap M_1)$.

Finally, $\sum_{n\in\omega}\Fin^{n+1}$ is Egorov in the sense of Kadets and Leonov. Indeed, for products of the form $\sum_{n\in\omega}\I_n$ the same reasoning as in the proof of Proposition \ref{prop:Egorov-sumIn} works and each $\Fin^{n+1}$ is Egorov in the sense of Kadets and Leonov by Remark \ref{rem-Fubini} and \cite[Corollary 2.9]{KadetsLeonov}.
\end{proof}


\bibliographystyle{amsplain}
\bibliography{Egorov-references.bib}

\end{document}